\newtheorem{theorem}{Theorem}[section]
\newtheorem{proposition}[theorem]{Proposition}
\newtheorem{lemma}[theorem]{Lemma}
\newtheorem{cor}[theorem]{Corollary}
\newtheorem{remark}[theorem]{Remark}
\newtheorem{defin}[theorem]{Definition}
\theoremstyle{definition}
\theoremstyle{remark}
\numberwithin{equation}{section}
\newcommand{\Q}{Q^{n}}
\newcommand{\uu}{u^{n}}  
\newcommand{\mcF}{\mathcal{F}}
\newcommand{\mcS}{\mathcal{S}}
\newcommand{\na}{\nabla}
\newcommand{\pa}{\partial}
\newcommand{\f}{\frac}
\newcommand{\G}{\Gamma}
\newcommand{\de}{\delta}
\newcommand{\Om}{\Omega}
\newcommand{\lam}{\lambda}
\newcommand{\ri}{\rightarrow}
\begin{document}

\title[NLC-Colloid]{Global existence of Weak Solutions  for a model of\\  nematic liquid crystal-colloidal interactions}




\author{Zhiyuan Geng $^1$
		\and Arnab Roy$^4$ \and Arghir Zarnescu$^{1,2,3}$}
  \thanks{Z.G, A.R and A.Z have been partially supported by the Basque Government through the BERC 2022-2025 program and by the Spanish State Research Agency through Severo Ochoa CEX2021-001142 and through project PID2020-114189RB-I00 funded by Agencia Estatal de Investigación (PID2020-114189RB-I00 / AEI / 10.13039/501100011033). A.Z. was also partially supported  by a grant of the Ministry of Research, Innovation and Digitization, CNCS - UEFISCDI, project number PN-III-P4-PCE-2021-0921, within PNCDI III. The research of A.R. has been supported by the Alexander von Humboldt-Stiftung / Foundation. The authors acknowledge the hospitality in Hausdorff Research Institute for Mathematics funded by the Deutsche Forschungsgemeinschaft (DFG, German Research Foundation) under Germany's Excellence Strategy–EXC-2047/1–390685813.}

\date{\today}

\maketitle

\bigskip

\centerline{$^1$ BCAM, Basque Center for Applied Mathematics}

\centerline{Mazarredo 14, E48009 Bilbao, Bizkaia, Spain}

\centerline{$^2$IKERBASQUE, Basque Foundation for Science, }

\centerline{Plaza Euskadi 5, 48009 Bilbao, Bizkaia, Spain}

\centerline{$^3$``Simion Stoilow" Institute of the Romanian Academy,}

\centerline{21 Calea Grivi\c{t}ei, 010702 Bucharest, Romania }

\centerline{$^4$ Technische Universit\"{a}t Darmstadt}

\centerline{Schlo\ss{}gartenstra{\ss}e 7, 64289 Darmstadt, Germany}
\begin{abstract}
In this paper we study a mathematical model describing the movement of a colloidal particle in a fixed, bounded three dimensional container filled with a nematic liquid crystal fluid. The motion of the fluid is governed by the Beris–Edwards model for nematohydrodynamics equations, which  couples the incompressible Navier-Stokes equations with a  parabolic system. The dynamics of colloidal particle within the nematic liquid crystal is described by the conservation laws of linear and angular momentum.  We prove the existence of  global  weak solutions for the coupled system.
\end{abstract}



\bigskip

\section{Introduction}

 We consider a colloidal particle as a rigid body  which is closed, bounded, simply connected and denote its position at time $t$  by $\mathcal{S}(t) \subset \mathbb{R}^3$, for any $t \in (0,T)$. The colloidal particle is moving inside a bounded $C^2$ domain $\Omega \subset \mathbb{R}^{3}$. 
 We assume that the fluid domain $\mathcal{F}(t):=\Omega \setminus \mathcal{S}(t)$ is filled with a viscous incompressible fluid with nematic liquid crystals. 
The initial domain of the colloidal particle is denoted by $\mathcal{S}_0$ and is assumed to have a $C^2$ boundary. Correspondingly, $\mathcal{F}_0 =\Omega \setminus \mathcal{S}_0$ is  the initial fluid domain. The solid domain at time $t$ is given by
\begin{equation*} \mathcal{S}(t)= \left\{h(t)+ \mathbb{O}(t)y \mid y\in \mathcal{S}_{0}\right\}, \end{equation*}
where $h(t)$ is the centre of mass of the body and $\mathbb{O}(t) \in SO(3)$ is associated to the rotation of the rigid body. The velocity of the rigid body is described by
\begin{equation*} 
u^{\mathcal{S}}(x,t)= \ell(t)+  \omega(t) \times (x-h(t)) \; \mbox{for} \quad (t,x) \in (0,T)\times \mathcal{S}(t), \end{equation*} 
where $\ell(t)=h'(t)$ and $\omega(t)$ are the linear and angular velocities of the centre of mass of the body. 
  
The  nematohydrodynamic equations coupling the equations describing the evolution of the direction of the molecules, given by $Q$, with those for the velocity of the center of mass of the  nematic molecules, $u^{\mcF}$, are:
\begin{align} {\partial_t Q} + (u^{\mcF}\cdot \nabla) Q - S(\nabla u^{\mcF}, Q) &= \Gamma H   \quad \mbox{ in } (0,T)\times \mathcal{F}(t), \label{Q:fluid} \\ {\partial_t u^{\mcF}} + \left(u^{\mcF}\cdot \nabla\right)u^{\mcF} + \nabla p^{\mcF} &=\mu\Delta u^{\mcF} +  \operatorname{div}(\tau +\sigma) \quad \mbox{ in } (0,T)\times \mathcal{F}(t),\label{vel:fluid}  \\ 
 \mathrm{div}\,u^{\mcF}&=0 \quad \mbox{ in } (0,T)\times\mathcal{F}(t),
 \label{div:fluid}
\end{align}
with 
\begin{equation*}
S(\nabla u^{\mcF}, Q)= (\xi D(u^{\mcF}) +\Sigma(u^{\mcF}))\left(Q+\frac{1}{3}\mathbb{I}_3\right) + \left(Q+\frac{1}{3}\mathbb{I}_3\right)(\xi D(u^{\mcF}) - \Sigma(u^{\mcF})) -2\xi\left(Q+\frac{1}{3}\mathbb{I}_3\right)\mathrm{tr}(Q\nabla u^{\mcF}),
\end{equation*}
\begin{equation}\label{def: H}
H=\Delta Q -a Q + b\left(Q^2 - \frac{\operatorname{tr}(Q^2)}{3}\mathbb{I}_3\right) - c\ Q\operatorname{tr}(Q^2).
\end{equation}
In the above the constants $c>0$, $\Gamma >0$, $\mu>0$, $a, b \in \mathbb{R}$. Here $u^{\mcF}$ is the flow velocity, $p^{\mcF}$ is the pressure, $Q$ is the Landau-de Gennes tensor order parameter that is a traceless and symmetric matrix and the Newtonian viscosity coefficient is denoted by $\mu$. The constant $\xi$ is particular to the liquid crystal material and measures the ratio between the tumbling and the aligning effect that a shear flow would exert over the liquid crystal directors.

The symmetric and anti-symmetric parts of Newtonian stress tensor are given by 
$$D(u^{\mcF}) =\frac{1}{2}\left(\nabla u^{\mcF} + (\nabla u^{\mcF}) ^{\top}\right),\quad \Sigma(u^{\mcF})= \frac{1}{2}\left(\nabla u^{\mcF} - (\nabla u^{\mcF}) ^{\top}\right).$$
The additional stress,  the non-Newtonian strain tensor, describing the effect produced by the rod-like liquid crystal molecules,   has the following symmetric and anti-symmetric parts: 
\begin{equation*}
\tau = \left[-\xi\left(Q+\frac{1}{3}\mathbb{I}_3\right)H-\xi H\left(Q+\frac{1}{3}\mathbb{I}_3\right) + 2\xi\left(Q+\frac{1}{3}\mathbb{I}_3\right)QH - \nabla Q\odot \nabla Q\right],
\end{equation*}
\begin{equation*}
\sigma = QH-HQ.
\end{equation*}
In this paper, we restrict ourselves to the co-rotational case $\xi = 0$ which reduces $\tau$ and $S(\nabla u^{\mcF}, Q)$ to the following form:
\begin{equation}\label{tau}
\tau = - \nabla Q\odot \nabla Q,
\end{equation}
and 
\begin{equation*}
S(\nabla u^{\mcF}, Q)= \Sigma(u^{\mcF}) Q- Q\Sigma(u^{\mcF}).
\end{equation*}
The quantity $H$ (defined in \eqref{def: H}) also relates to the variational derivative of the free energy functional:
\begin{equation*}
\mathcal{E}(Q)=\int_{\mcF(t)}\left(\frac{1}{2}|\nabla Q|^2+ f_{b}(Q)\right),
\end{equation*}
where the bulk energy $f_b(Q)$ is given by 
\begin{equation*}
f_b(Q)=\frac{a}{2}\textrm{tr}(Q^2)-\frac{b}{3}\textrm{tr}(Q^3)+\frac{c}{4}(\textrm{tr}(Q^2))^2,
\end{equation*}
where $a,b,c$ are material dependent constants. Thus $H$ can be written as
\begin{equation}\label{newH}
H=\Delta Q - \frac{\partial f_b(Q)}{\partial Q}.
\end{equation}
We take the colloidal particle to have constant density equal to $1$. Then the  total mass and the position of the centre of mass of the colloid are given by 
\begin{equation}\label{cm}
m = \int\limits_{\mathcal{S}(t)} dx,\quad h(t)= \frac{1}{m}\int\limits_{\mathcal{S}(t)} x \ dx.
\end{equation}
Further, 
the moment of inertia $J(t)$ is defined by
\begin{equation}\label{moment of inertia}
J(t) =  \int\limits_{\mathcal{S}(t)}
 \Big(|x-h(t)|^2\mathbb{I}_3 - (x-h(t))\otimes (x-h(t))\Big)\, dx. 
 \end{equation}
The motion of the colloidal particle can be described by Newton's law:
\begin{align} 
m\ell'&= -\int\limits_{\partial \mathcal{S}(t)} (2\mu D(u^{\mcF})+\tau+\sigma-p^{\mcF}\mathbb{I}_3) n\, d\Gamma , \label{linear momentum:body}
\\ 
J\omega' &= -\int\limits_{\partial \mathcal{S}(t)} (x-h) \times (2\mu D(u^{\mcF})+\tau+\sigma-p^{\mcF}\mathbb{I}_3) n\, d\Gamma,
\label{angular momentum:body}
\end{align} 
where $n$ is the unit outward normal to the boundary of $\mathcal{F}(t)$, i.e., directs towards $\mathcal{S}(t)$. In the system \eqref{Q:fluid}--\eqref{angular momentum:body}, the unknowns are $u^{\mcF}(t,x)$, $Q(t,x)$, $h(t)$, $\ell(t)$ and $\omega(t)$. We consider the following  boundary condition at the fluid-structure interface for the flow velocity and the homogeneous Dirichlet boundary condition for the tensor order parameter: 
\begin{align} 
u^{\mcF} &= u^{\mathcal{S}} \quad \mbox{ on }  (0,T)\times\partial\mathcal{S}(t),\quad u^{\mcF}=0\quad \mbox{ on }  (0,T)\times\partial\Omega,\label{bdary:u} \\
 { Q} &= 0  \quad \mbox{ on } (0,T)\times(\partial\mathcal{S}(t)\cup \partial\Omega). \label{bdary:Q}
\end{align}

We can close the system by assigning the appropriate initial conditions:
\begin{equation}\label{initial}
Q(0,x)=Q_0(x) \mbox{ in } \mathcal{F}_0, \quad u(0,x)=u_0(x) \mbox{ in } \mathcal{F}_0,\quad h(0)=0, \quad h'(0)=\ell_0\in \mathbb{R}^3,\quad \omega(0)=\omega_0\in \mathbb{R}^3.
\end{equation}


  There is a rich literature on nematohydrodynamics in the absence of particles, see for instance the review \cite{lin2014recent} and the references therein. Global existence of the weak solution for the full coupled Navier-Stokes and Q-tensor system was established by Paicu-Zarnescu in the whole space \cite{MR2837493, MR2864407} and by Abels-Dolzmann-Liu in a bounded domain \cite{abels2014well} with further results being obtained in \cite{abels2016strong,wilkinson2015strictly,guillen2014weak,du2020suitable,cavaterra2016global,huang2015global,dai2016asymptotic}.  On the other hand, not many results are available regarding suspended colloidal particles in a nematic-fluid flow except the experimental results \cite{SEN}, \cite{MU}. In \cite{MO}, some interesting features in a two-dimensional NLC (Nematic liquid crystal) microfluidic channel (parallel-plate geometry) with a circular particle has been observed experimentally and numerically. 
  
The mathematical analysis of systems describing the motion of a rigid body in a viscous \textit{incompressible} fluid is nowadays well developed.  The proof of existence of weak solutions until a first collision can be found in several papers, see \cite{CST,DEES1,GLSE,HOST,SER3}. Later, the possibility of collisions was included in the works of San Martin et.al in \cite{MR1870954} and of Feireisl in \cite{F3} for $2D$ and $3D$ cases respectively.
In the case of nematic fluid flow, there is the additional stress tensor and the Navier-Stokes equations are coupled with the equation describing the orientation of the anisotropic liquid crystal molecules. Moreover, there is a competition between elasticity and the interaction between NLC molecules and surfaces (known as ``anchoring'') in the nematic fluid-colloid interaction. This makes this interaction problem qualitatively different from the incompressible Newtonian fluid-rigid body interaction.

However there is, to our knowledge, no mathematical analysis available in terms of the appropriate sense of solutions (weak or strong) or regarding the pertinent functional framework to analyze the nematic fluid-rigid body interaction problems. In this paper, our aim is to establish existence of weak solution to system \eqref{Q:fluid}--\eqref{initial} in any interval $[0,T]$. Our work builds on the approaches in the Newtonian case, in particular the works  \cite{MR1870954} and \cite{F3} but deals with certain specific challenging difficulties related to the presence of the non-Newtonian component, the liquid crystal molecules: the more complex system has an additional equation when compared to the Newtonian case, namely the equation for $Q$. This equation is defined only on the fluid domain $\mcF(t)$ and, unlike in the Newtonian case, there is no obvious way of extending the equation to the whole container $\Omega$, particularly because the natural level of regularity of the weak solutions, that is one derivative higher than that for the fluid component. We combine ideas specific to the treatment of liquid crystal by introducing an auxiliary equation for the variational derivative of the free energy, as in  \cite{MR3274285}  together with a specific penalisation for enforcing the $Q$ to be  extended by zero throughout the colloid, while taking care of the complex effects on the fluid with an additional temporary regularisation inspired by \cite{HOST}.

 We point out that for the sake of convenience, our proof will focus on the flow with only one colloidal particle, but all of our results can be extended without much difficulty to the multi-particle case (see Remark \ref{rmk:multi-particle}). The plan of the paper is as follows: In Section \ref{sec2}, we set the notations, define a notion of weak solution to the system \eqref{Q:fluid}--\eqref{initial}, and state the main result (see Theorem \ref{exist:main}) of the paper. Section \ref{sec3} is devoted to providing an appropriate approximate problem and to proving the existence of a solution to this problem. Section \ref{sec4} is dedicated to proving the main result, Theorem \ref{exist:main} by passing to the limit in the penalized parameters. In the \cref{appendix}, we explain the derivation of the weak formulation \eqref{ws:u}.

\section{Weak solutions: notations and the main result}\label{sec2}
We need to introduce some notations before defining a \textbf{weak solution} to system \eqref{Q:fluid}--\eqref{initial}. Firstly, we want to define appropriate space for the fluid velocity. Let us introduce
\begin{equation*}
\mathcal{D}(\Omega)=\left\{v\in C_c^{\infty}(\Omega;\mathbb{R}^d)\mid \operatorname{div}v=0\mbox{ in }\Omega\right\}.
\end{equation*}
The spaces $H(\Omega)$ and $V(\Omega)$ are the closure of the space $\mathcal{D}(\Omega)$ under the norms $L^2(\Omega)$ and $H^1(\Omega)$ respectively. We can also characterize these spaces in the following way:
\begin{equation*}
{H}(\Omega)=\left\{v\in L^{2}(\Omega;\mathbb{R}^d)\mid \operatorname{div}v=0\mbox{ in } [C_c^{\infty}(\Omega)]',\quad v\cdot n=0\mbox{ in }H^{-1/2}(\partial\Omega)\right\},
\end{equation*}
\begin{equation*}
{V}(\Omega)=\left\{v\in H_0^{1}(\Omega;\mathbb{R}^d)\mid \operatorname{div}v=0\mbox{ in }L^2(\Omega)\right\}.
\end{equation*}
If $\chi$ is the characteristic function of a subset in $\Omega$, we define 
\begin{equation*}
    \mcS(\chi)=\{x\in\Om: \chi(x)=1\}.
\end{equation*}

Let $\sigma>0$ and $G\subset \Om$ be an open subset. We denote by $G_\sigma$ the $\sigma$-neighborhood of $G$, i.e.
\begin{equation*}
    G_\sigma:=\{x\in\Om: \mathrm{dist}(x,G)<\sigma\}.
\end{equation*}

We define the function space
\begin{equation*}
{K}_\sigma(\chi):=\left\{v\in V(\Omega)\mid D(v)(x)=0\mbox{ for a.e. } x\in \mcS(\chi)_\sigma\right\},
\end{equation*}
and 
\begin{equation*}
\begin{split}
    &K_0(\chi):=\mbox{ closure of }\bigcup\limits_{\sigma>0}K_\sigma(\chi)\mbox{ in }V(\Omega).\\
    &K(\chi):=\{v\in V(\Omega)\mid \chi D(v)=0\text{ in }L^2(\Omega)\}.
\end{split}
\end{equation*}

A function $\varphi\in \mbox{Char}([0,T]\times\Omega)$ iff $\varphi$ is the characteristic function of some subset of $[0,T]\times \Omega$. If $\varphi\in \mbox{Char}([0,T]\times \Omega)$, then
\begin{equation*}
L^p(0,T;K(\varphi))=\left\{v\in L^p(0,T; V(\Omega))\mid v(\cdot,t) \in K(\varphi(\cdot,t))\mbox{ for almost all }t\in [0,T]\right\}.
\end{equation*}
Secondly, we want to define the appropriate space for Landau-de Gennes orientation $Q$ tensor. We introduce the space of $Q$-tensors, that is of $3\times 3$ symmetric, traceless, real-valued matrices:
\begin{equation*}
M=\left\{Q\in \mathbb{R}^{3\times 3}; Q=Q^{\top};\mid \mbox{tr}(Q)=0\right\}.
\end{equation*}
Instead of considering separately the velocity field of the fluid and the rigid body, we consider here one uniform velocity field $u$ defined in $(0,T)\times \Omega$ whose restriction to the rigid body $\mathcal{S}(t)$ is the rigid velocity $u^{\mathcal{S}}$, i.e,
\begin{equation}\label{def:u}
  u = \begin{cases}
  u^{\mcF} \mbox{ in }\mcF(t), \\
  u^{\mcS} \mbox{ in }\mcS(t).
  \end{cases}
  \end{equation}
  
Throughout the paper, for two vectors $a,b\in\mathbb{R}^3$ we set $a\cdot b=\sum_{i=1}^3 a_ib_i$ and $a\otimes b= ab^{\top}=(a_ib_j)_{1\leq i,j\leq 3}$; while for two matrices $A,B\in \mathbb{R}^{3\times 3}$ we set 
\begin{equation*}
A:B=\sum_{i,j=1}^3 A_{ij}B_{ij},\quad \na A \odot \na B=(\pa_i A:\pa_j B)_{1\leq i,j\leq 3} \mbox{  and  }Q:\na A = (Q:\pa_j A)_{1\leq j\leq 3}. 
\end{equation*}

\begin{defin}\label{def:weak sol}
Let $u_0 \in H(\Omega)$, $Q_0 \in H^1_0(\mathcal{F}_0;M)$ and $\varphi_0$ be the characteristic function of $\mcS_0$.  The quadruple $(u,Q,H,\varphi)$ satisfying
\begin{equation*}
\begin{split}
&u \in L^{\infty}(0,T; H(\Omega))\cap L^2(0,T;K(\varphi)),\quad Q\in L^{\infty}(0,T; H^1_0(\mcF(t);M))\cap L^2(0,T;H^2(\mcF(t);M)),\\
&\varphi\in \mbox{Char}\ (\Omega)\cap C^{0,\frac{1}{p}}(0,T;L^p(\Omega)),\ p\in[1,+\infty), \quad H\in L^2(0,T,L^2(\mathcal{F}(t)))
\end{split}
\end{equation*}
is said to be a weak solution of the system \eqref{Q:fluid}--\eqref{initial} if the following holds
\begin{enumerate}
\item For any functions 
 \begin{align*}
 &\psi\in H^1((0,T); H^1(\Omega;M) )\cap L^2(0,T;H_0^1(\mcF(t);M)),\  \psi(T)=0,\\
 &\xi\in L^2(0,T;H^1_0(\mcF(t);M)), \\
 &\zeta\in H^1((0,T)\times \Omega))\cap L^2(0,T;K_0(\varphi)), \ \zeta(T)=0,\\
 &\eta\in C^1([0,T]\times\Omega), \ \eta(T)=0.   
 \end{align*}
\begin{equation}\label{ws:Q}
\int\limits_0^T\int\limits_{\mcF(t)} \left[- Q:\pa_t \psi -\Gamma H : \psi +(u\cdot \nabla)Q : \psi -(\Sigma Q) : \psi + (Q\Sigma): \psi \right]\ dx \ dt =\int\limits_{\mcF(0)} Q(0):\psi(0)\,dx
\end{equation}
\begin{equation}\label{ws:H}
\int_0^T\int\limits_{\mcF(t)} \left[ \na Q:\na \xi+ \frac{\pa f_b(Q)}{\pa Q}: \xi+H : \xi \right]\ dx\ dt=0
\end{equation}
\begin{multline}\label{ws:u}
\int\limits_0^T\int\limits_{\Omega} \left[-u\cdot \pa_t\zeta - (u\otimes u): \na\zeta +\mu D(u):D(\zeta)\right]\ dx \ dt \\
= \int_{\Omega} u(0)\cdot \zeta(0)\,dx+ \int\limits_0^T\int\limits_{\mcF(t)} \left[(\na Q\odot \na Q):\na \zeta -(QH):\na \zeta+ (HQ):\na\zeta \right]\ dx \ dt,
\end{multline}
\begin{equation}\label{ws:solid}
\int\limits_0^T\int\limits_{\Omega} \varphi \left[\partial_t\eta+(u\cdot\nabla)\eta\right]\ dx\ dt= - \int\limits_{\Omega} \varphi_0 \eta(0)\ dx,
\end{equation}
\item For almost every $t\in (0,T)$, the following energy inequality holds: 
\begin{equation}\label{energy:main}
E(u,Q)(t)+\mu\int_0^t \int\limits_{\Omega}|D u|^2\,dx\ ds 
   + \Gamma\int_0^t\int\limits_{\mathcal{F}(t)} |H|^2\,dx\,ds
\le E(u_0,Q_0),\ \ \forall \ t\in[0,T],
\end{equation}
where the total energy $E(u,Q)$ is defined by
\begin{equation}\label{energy}
E(u,Q)\stackrel{def}{=}\frac{1}{2}\int\limits_{\Omega}|u|^2\,dx+\mathcal{E}(Q),
\end{equation}
\begin{equation*}
\mathcal{E}(Q)=\int\limits_{\mcF(t)}\left(\frac{1}{2}|\nabla Q|^2+ f_{b}(Q)\right)\,dx\quad\mbox{with}\quad f_b(Q)=\frac{a}{2}\textrm{tr}(Q^2)-\frac{b}{3}\textrm{tr}(Q^3)+\frac{c}{4}(\textrm{tr}(Q^2))^2.
\end{equation*}
 \end{enumerate}
\end{defin}
\begin{remark}
\begin{enumerate}
    \item
The equation \eqref{ws:H} is derived from \eqref{newH} by testing with $\xi$. This helps us to define the appropriate penalized problem $($see \eqref{ws:Hn}$)$ and we use \eqref{ws:Hn} to derive the energy inequality \eqref{galerkin:energy}.
\item Due to the definition of uniform velocity field $u$ in \eqref{def:u}, we can also represent the energy \eqref{energy} in the following form:\begin{equation*}
E(u,Q)\stackrel{def}{=}\frac{1}{2}\int\limits_{\mcF(t)}|u|^2\,dx+\frac{1}{2} m|h'|^2+\frac{1}{2} J\omega\cdot\omega+\mathcal{E}(Q).
\end{equation*}
\end{enumerate}
\end{remark}

After introducing the definitions of the spaces and the weak solution, we are ready to announce the main result of our paper:
\begin{theorem}\label{exist:main}
Assume that $T>0$, $u_0 \in H(\Omega)$, $Q_0 \in H^1_0(\mathcal{F}_0;M)$, $\varphi_0$ is the characteristic function of $\mcS_0$ and the boundaries $\partial \Omega$, $\partial \mathcal{S}$ are of class $C^2$. Then the system \eqref{Q:fluid}--\eqref{initial} admits at least one weak solution in $[0,T]$ in the sense of Definition \ref{def:weak sol}. Moreover, we have the following energy inequality: 
\begin{equation*}
E(u,Q)(t)+\mu\int_0^t \int_{\Omega}|D u|^2\,dx\ ds 
+ \Gamma\int_0^t\int_{\mathcal{F}(t)} |H|^2\,dx\,ds
\le E(u_0,Q_0),\ \ \forall t\in[0,T],
\end{equation*}
where the total energy $E(u,Q)$ is defined in \eqref{energy}.
\end{theorem}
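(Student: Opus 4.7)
Our plan is to construct solutions by a Feireisl--San Martin penalisation scheme \cite{F3,MR1870954}, adapted to the liquid-crystal setting by introducing the molecular field $H$ as an independent unknown coupled through \eqref{ws:H} (as in \cite{MR3274285}) and by regularising the transport of the characteristic function in the spirit of \cite{HOST}. Concretely, for each $n\in\mathbb{N}$ and a small mollification parameter $\varepsilon=\varepsilon(n)\to 0$, we pose a system on the whole container $\Omega$ in which the velocity equation carries an extra penalty term $n\chi_n D(u_n)$, with $\chi_n$ a smoothed version of the current solid indicator, forcing $u_n$ to be approximately rigid inside the colloid; the $Q$-equation carries an extra penalty $n\chi_n Q_n$, forcing $Q_n\to 0$ inside the colloid; and $\varphi_n$ is transported by the mollified velocity $[u_n]_\varepsilon$, so that the transport field is Lipschitz and $\varphi_n$ remains an honest characteristic function with the H\"older-in-time regularity required by \cref{def:weak sol}.

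At fixed $n$ we solve the approximate problem by a Galerkin scheme in $V(\Omega)\times H^1(\Omega;M)$, augmented by \eqref{ws:H} for $H_n$. The crucial point is to close the energy identity at the approximate level: we test the velocity equation with $u_n$, equation \eqref{ws:Q} with $\Gamma H_n$, and \eqref{ws:H} successively with $\pa_t Q_n$ and $(u_n\cdot\na)Q_n$. The Leslie-type skew-symmetric terms cancel by the algebraic identity $(\Sigma Q-Q\Sigma):H+(QH-HQ):\na u=0$ from \cite[Lemma 2.1]{abels2014well}, while the elastic Ericksen stress $(\na Q\odot\na Q):\na u$ produced by the test $(u\cdot\na)Q$ in \eqref{ws:H} is cancelled by the corresponding term in \eqref{ws:u} through the definition \eqref{tau}. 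This delivers, uniformly in $n$, bounds of the form
\begin{equation*}
u_n\in L^\infty(0,T;L^2(\Omega))\cap L^2(0,T;H^1(\Omega)),\quad Q_n\in L^\infty(0,T;H^1(\Omega;M)),\quad H_n\in L^2(0,T;L^2(\Omega;M)),
\end{equation*}
together with the penalisation bound $n\int_0^T\int_\Omega \chi_n\bigl(|D(u_n)|^2+|Q_n|^2\bigr)\,dx\,dt\le C$. A bootstrap using $H_n=\Delta Q_n-\pa_Q f_b(Q_n)$ in the fluid region yields $Q_n\in L^2(0,T;H^2(\mcF_n(t);M))$.

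We then pass to the limit $n\to\infty$. Weak compactness together with the Aubin--Lions lemma give strong $L^2$ convergence of $u_n$ and $Q_n$, and the H\"older-in-time bound on $\varphi_n$ upgrades it to convergence in $C^0([0,T];L^p(\Omega))$ for every $p<\infty$; DiPerna--Lions renormalisation identifies the limit $\varphi$ as a characteristic function transported by the limit field $u$, so the limiting solid region $\mcS(t)$ moves along the characteristics of $u$. The penalty bounds force $D(u)=0$ and $Q=0$ a.e.\ on $\mcS(t)$, so $u$ is rigid on the colloid (this defines $\ell(t)$, $\omega(t)$ and hence $\mathbb{O}(t)$), while $Q$ admits an $H^1_0$ extension by zero through the interface. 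With these convergences we pass to the limit in each term of \eqref{ws:Q}--\eqref{ws:solid}, and the energy inequality \eqref{energy:main} is inherited by weak lower semicontinuity. The Newton balances \eqref{linear momentum:body}--\eqref{angular momentum:body} are then recovered by restricting test functions in the limiting \eqref{ws:u} to vector fields that are rigid on $\mcS(t)$, i.e., to $K(\varphi)$.

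The main obstacle is the strong $L^2(0,T;H^1(\Omega))$ convergence of $Q_n$, which is needed to identify both the elastic stress $\na Q\odot \na Q$ in \eqref{ws:u} and the commutator $QH-HQ$. Because the $Q$-equation is posed on the \emph{moving} domain $\mcF_n(t)$ and the penalty $n\chi_n Q_n$ produces a sharp boundary layer across $\pa\mcS_n(t)$, classical parabolic compactness cannot be applied naively. We will overcome this by combining the uniform bound $\pa_t Q_n\in L^2(0,T;H^{-1})$ obtained from \eqref{ws:Q} with the H\"older regularity of $\varphi_n$, and then using a domain-shifting and cut-off argument, in the spirit of \cite{HOST}, to localise $Q_n$ in shrinking tubular neighbourhoods of $\pa\mcS_n(t)$; the penalisation bound $n\int_0^T\int_\Omega\chi_n|Q_n|^2\,dx\,dt\le C$ controls the layer and allows the upgrade from weak to strong $H^1$ convergence uniformly across the moving interface.
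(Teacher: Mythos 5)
Your scheme follows the same broad penalisation philosophy as the paper, but as written it proves at most a \emph{local} statement, while Theorem \ref{exist:main} claims a weak solution on an arbitrary interval $[0,T]$. Nothing in your proposal addresses what happens when the colloid reaches $\partial\Omega$: the whole construction (transport of $\varphi_n$, rigidity of $u$ on $\mcS(t)$, the space $K_0(\varphi)$) is only meaningful while $\mathrm{dist}(\mcS(\varphi(t)),\partial\Omega)>0$. The paper needs a separate argument here: an extension lemma defining $u(T_0)$, $Q(T_0)$, $\varphi(T_0)$ at the first contact time by weak limits, the convention that the body sticks to the boundary after contact, solvability of the resulting fixed-domain Beris--Edwards system on $\Omega_1=\Omega\setminus\overline{\mcS}(\varphi(T_{max}))$, and a gluing step. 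Without this your argument cannot reach the stated conclusion. Related to this, even the pre-collision lifespan is not controlled in your scheme: the solid is transported by the mollified field $[u_n]_\varepsilon$, whose $L^\infty$ norm is only bounded by $\varepsilon^{-3/2}\|u_n\|_{L^2}$, so with $\varepsilon=\varepsilon(n)\to0$ you have no bound, uniform in $n$, on how fast the body approaches $\partial\Omega$; the paper resolves this by keeping a second parameter $\delta$ (the $\delta\,\nabla\Delta u\cdot\nabla\Delta\zeta$ regularisation, giving $u\in L^2(0,T;H^3)$ and hence Lipschitz characteristics at fixed $\delta$), and obtains a $\delta$-independent time only \emph{after} the $n\to\infty$ limit, where $u_\delta$ is exactly rigid on the solid so its speed there is controlled by the energy through $m|h_\delta'|^2+J\omega_\delta\cdot\omega_\delta$. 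You would need an analogous uniformity argument, and approximate rigidity alone does not obviously give it.

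The second gap is your treatment of the key compactness step. You propose to prove strong $L^2(0,T;H^1)$ convergence of $Q_n$ ``uniformly across the moving interface'' using the bound $n\int_0^T\int_\Omega\chi_n|Q_n|^2\le C$. That bound gives only $L^2$-smallness of $Q_n$ inside the solid; it says nothing about $\nabla Q_n$ in the penalisation layer, and no uniform $H^2$ bound across $\partial\mcS_n(t)$ is available, so the claimed upgrade is unsupported and, in all likelihood, false as stated (gradient energy may concentrate on the layer). The paper avoids needing it: strong $H^1$ convergence of $Q^n$ is proved only on $\widetilde{\mathcal F}_\sigma(t)=\Omega\setminus\mcS_\sigma(\varphi(t))$, by slicing $[0,T]$ into short intervals on which the solid stays inside a fixed set $\mcS_j$ and using the $H$-equation to get local $H^2$ and $\partial_t Q^n\in L^2(H^{-1})$ bounds there. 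In the momentum equation one then tests with $\zeta\in K_\sigma(\varphi)$, so the symmetric Ericksen stress pairs with $D(\zeta)=0$ on $\mcS_\sigma(\varphi(t))$ and only the region of strong convergence contributes, while the commutator $Q^nH^n-H^nQ^n$ near and inside the solid is estimated on thin shells by smallness of their measure together with $Q=0$ on $\mcS(\varphi(t))$ and $\|Q^n\|_{L^2(\mcS(\varphi^n))}\to0$; a density argument then passes from $K_\sigma$ to $K_0(\varphi)$. You should replace your interface-layer claim by an argument of this localised type (or some equivalent), since the definition of weak solution only requires the limit equations against such test functions.
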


\section{Approximate solutions}\label{sec3}
In this section, we propose a penalized problem such that after taking appropriate limits we can recover the equations \eqref{ws:Q}--\eqref{ws:solid}. Given $u_0 \in H(\Omega)$, $Q_0 \in H^1_0(\Omega;M)$ and $\varphi_0\in L^{\infty}(\Omega)\cap \mathrm{Char}(\Omega)$, we want to find
$(u^n,Q^n,H^n,\varphi^n)$ such that
\begin{equation*}
u^n \in L^{\infty}(0,T; H(\Omega))\cap L^2(0,T;V(\Omega)\cap H^3(\Omega)),\quad Q^n\in L^{\infty}(0,T; H^1_0(\Omega;M))\cap L^2(0,T;H^2(\Omega;M)),
\end{equation*}
\begin{equation*}
H^n\in L^2(0,T;L^2(\Omega)),\quad \varphi^n \in \mbox{Char}\ (\Omega) \cap C^{0,\frac{1}{p}}(0,T;L^p(\Omega)),\ p\in[1,+\infty) 
\end{equation*}
and the following relations hold
\begin{equation}\label{ws:Qn}
\int\limits_0^T\int\limits_{\Omega} \left[- Q^n: \pa_t\psi -\Gamma H^n: \psi +(\uu\cdot \nabla)\Q: \psi -(\Sigma^n \Q-\Q\Sigma^n): \psi \right]\,dx \,dt=\int_\Om Q^n(0):\psi(0)\,dx,
\end{equation}
\begin{equation}\label{ws:Hn}
   \int_0^T\int_{\Omega} \left[ \sum_{i=1}^3 (\pa_i \Q: \pa_i\xi)+n\varphi^nQ^n\xi+ \f{\pa f_b(\Q)}{\pa Q}: \xi+H^n:\xi \right]\ dx\ dt=0
\end{equation}
\begin{multline}\label{ws:un}
\int\limits_0^T\int\limits_{\Omega} \left[-\uu\cdot \pa_t\zeta - (\uu\otimes\uu):\na\zeta+(\mu+n\varphi^n) D(\uu):D(\zeta)+\delta \nabla \Delta u^n\cdot \nabla\Delta \zeta\right]\ dx \ dt  \\
= \int_\Om u^n(0)\cdot\zeta(0)\,dx+\int\limits_0^T\int\limits_{\Omega} \left[( -(\zeta\cdot\na )Q^n:(H^n+n\varphi^nQ^n) -(\Q H^n-H^n\Q):\na \zeta \right]\ dx \ dt,
\end{multline}
\begin{equation}\label{ws:solidn}
\int\limits_0^T\int\limits_{\Omega} \varphi^n\left[\partial_t\theta+({\uu}\cdot\nabla)\theta\right]\ dx\ dt= - \int\limits_{\Omega} \varphi_0\theta(0)\ dx,
\end{equation}
for any functions $\psi\in H^1((0,T)\times\Omega;M)$,\    {\color{blue} $\xi\in L^2(0,T; H^1_0(\Om;M))$}, $\zeta\in H^1((0,T)\times \Omega)\cap L^2(0,T;V(\Omega)\cap H^3(\Omega))$ with $\zeta(T)=0$ and $\theta\in C^1([0,T]\times\Omega)$ with $\theta(T)=0$.

Let us comment on the proposed penalized problem \eqref{ws:Qn}--\eqref{ws:solidn}:
\begin{itemize}
\item We approximate the rigid bodies by very viscous fluids via the penalization term $(\mu+n\varphi^n)D(\uu):D(\zeta)$ in \eqref{ws:un}. This idea had first been introduced in the context of Incompressible Navier-Stokes-rigid body interaction problem by San Mart\'{\i}n et. al \cite{MR1870954}.
\item We regularize the fluid velocity with the $\delta \nabla \Delta u^n\cdot \nabla\Delta \zeta$ term in \eqref{ws:un} as discussed in \cite{HOST}. This regularization is useful to analyze the transport equation \eqref{ws:solidn}.
\item The approximate problem \eqref{ws:Qn}--\eqref{ws:Hn} for $Q$-tensor equation is defined on whole $\Omega$ whereas the limiting equation has to be defined only on $\mathcal{F}(t)$ (see \eqref{ws:Q}--\eqref{ws:H}). We have managed to do so by introducing the term $n\varphi^{n}Q^n$ in the equation \eqref{ws:Hn}.
\end{itemize}

\begin{remark}
The solution $(\uu,\Q,H^n,\varphi^n)$ to the approximate problem \eqref{ws:Qn}--\eqref{ws:solidn} also depends on the regularizing parameter $\delta$ (see equation\eqref{ws:un}). But, for simplicity, we write $(\uu,\Q,H^n,\varphi^n)$ instead of\\ $(u^{n,\delta},Q^{n,\delta},H^{n,\delta},\varphi^{n,\delta})$.
\end{remark}
\begin{remark}
Since $\tau= - \nabla Q\odot \nabla Q$, we observe that 
\begin{equation*}
-\mathrm{div}\tau=\mathrm{div} (\nabla Q\odot \nabla Q) = H:\nabla Q + \nabla \left(\frac{1}{2}|\nabla Q|^2+ f_b(Q)\right).
\end{equation*}
This observation helps us to introduce the term $\int\limits_{\Omega} (\zeta\cdot\na )Q^n:H^n$ in \eqref{ws:un}.
\end{remark}

 
In the following we will present an adaptation of  \cite[Theorem 1.4]{MR3274285} and \cite[Theorem 1.2]{abels2014well} regarding the  existence of weak solutions for the coupled Navier-Stokes and Q-tensor system in a bounded domain to establish the existence of solution $(\uu,\Q,H^n,\varphi^n)$ to the approximate problem \eqref{ws:Qn}--\eqref{ws:solidn}:
 \begin{theorem}\label{exist:approx}
 Assume that $u_0 \in H(\Omega)$, $Q_0 \in H^1_0(\mathcal{F}_0;M)$, $\varphi_0$ is the characteristic function of $\mcS_0$ and the boundaries $\partial \Omega$, $\partial \mathcal{S}$ are of class $C^2$. There exists a time $T_n>0$ (depending on $\operatorname{dist}(S(\varphi_0),\partial\Omega)$, $\delta$, $u_0$, $Q_0$) such that the penalized problem \eqref{ws:Qn}--\eqref{ws:solidn} has a solution $(u^{n},Q^{n},H^{n},\varphi^{n})$ which satisfies the following energy estimate for a.e. $t\in (0,T_n)$: 
\begin{multline}\label{energy:approx}
E^{app}(u^n,Q^n)(t)+\int_0^t \int_{\Omega}(\mu+n\varphi^n)|D \uu|^2\,dxds +\int_0^t\int_{\Omega}\Gamma |H^n|^2 \,dx\,ds\\ +\int_0^t\int_{\Omega}\delta |\nabla\Delta u^n|^2\,dx\,ds \le E(u_0,Q_0),
\end{multline}
where
\begin{equation*}
\begin{split}
E^{app}(u^n,Q^n)(t)&\stackrel{def}{=} \frac{1}{2}\int_{\Omega}|u^n|^2\,dx+\int_{\Omega}\left(\frac{1}{2}|\nabla Q^n|^2+ f_{b}(Q^n)\right)\,dx+n\int_{\Omega}\frac{\varphi^n}{2}|Q^n|^2(t,x)\,dx.
\end{split}
\end{equation*}
In addition, $\mcS(\varphi^n(t))$ will not touch the boundary, i.e. $\mbox{dist}(\mcS(\varphi^n(t)),\pa\Om)>0$ for all $t\in(0,T_n)$.
\end{theorem}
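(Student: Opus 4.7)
The plan is to combine a Galerkin discretization for $(u^n, Q^n)$ with a Schauder fixed-point argument on the indicator $\varphi^n$, exploiting the $\delta$-regularization in \eqref{ws:un}. The key observation is that any solution of \eqref{ws:un} satisfies $u^n \in L^2(0,T_n; V(\Om) \cap H^3(\Om))$, so that by the Sobolev embedding $H^3(\Om) \hookrightarrow W^{1,\infty}(\Om)$ in three dimensions the flow of $u^n$ is Lipschitz in space and \eqref{ws:solidn} is solvable by characteristics. Moreover the travel-distance estimate
\begin{equation*}
\sup_{x \in \mcS_0}|X(t,x) - x| \le \int_0^t \|u^n(s)\|_{L^\infty(\Om)}\,ds \le C\, t^{1/2}\|u^n\|_{L^2(0,t;H^3(\Om))}
\end{equation*}
forces $\mcS(\varphi^n(t))$ to stay strictly inside $\Om$ provided $T_n$ is small in terms of $\mathrm{dist}(\mcS_0,\pa\Om)$, $\delta$, $u_0$, and $Q_0$.

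Given $\tilde u$ in the ball $\mathcal{B}_R := \{\tilde u \in L^2(0,T_n; V(\Om)\cap H^3(\Om)) : \|\tilde u\|_{L^2(0,T_n; H^3)} \le R\}$, I would define $\varphi[\tilde u](t,\cdot)$ as the characteristic function of $X^{\tilde u}(t,\mcS_0)$ and then solve the coupled $(u^n, Q^n, H^n)$ subsystem with $\varphi^n = \varphi[\tilde u]$ by Galerkin approximation on bases $\{w_i\} \subset V(\Om)\cap H^3(\Om)$ and $\{\psi_j\} \subset H^1_0(\Om; M)$. Equation \eqref{ws:Hn} determines $H^n_N$ algebraically as the $L^2$-projection of $-\Delta Q^n_N + n\varphi^n Q^n_N + \pa f_b(Q^n_N)/\pa Q$ onto $\mathrm{span}\{\psi_j\}$, so substituting into the finite-dimensional version of \eqref{ws:Qn}--\eqref{ws:un} yields a locally Lipschitz ODE for the Galerkin coefficients. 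Let $\Psi(\tilde u)$ denote the resulting velocity.

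For the energy estimate underlying \eqref{energy:approx}, I would test the Galerkin equation for $u^n_N$ with $u^n_N$ itself, the $Q$-equation with $H^n_N$, and \eqref{ws:Hn} with $\pa_t Q^n_N + (u^n_N\cdot\na)Q^n_N$. The co-rotational terms cancel via the pointwise identity $(\Sigma^n Q^n - Q^n\Sigma^n):H^n + (Q^nH^n - H^nQ^n):\na u^n = 0$ (\cite[Lemma~2.1]{abels2014well}); the convective contribution $(u^n\cdot\na)Q^n : H^n$ is absorbed by $(\na Q^n\odot\na Q^n):\na u^n$ exactly as in the standard formal energy calculation for the Beris--Edwards system. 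The new penalty term $n\varphi^n Q^n$ in \eqref{ws:Hn} produces the extra piece $\tfrac{n}{2}\int \varphi^n|Q^n|^2$ in $E^{\mathrm{app}}$ once one uses the transport identity $\pa_t\varphi^n + u^n\cdot\na\varphi^n = 0$ together with $\mathrm{div}\,u^n = 0$ to convert the advective derivative into a divergence that integrates to zero. All resulting bounds are uniform in $N$, so Aubin--Lions compactness lets me pass to the limit and obtain $(u^n, Q^n, H^n) = \Psi(\tilde u)$.

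The fixed point is then closed by Schauder: from \eqref{energy:approx} one gets $\|\Psi(\tilde u)\|_{L^2(0,T_n;H^3)}^2 \le \delta^{-1}E(u_0,Q_0)$, so $\Psi(\mathcal{B}_R)\subset \mathcal{B}_R$ for $R$ chosen accordingly, and continuity/compactness of $\Psi$ follow from uniform time regularity together with the classical stability of characteristics with respect to the transporting field. The \emph{main obstacle} is the compatibility between the fixed-point variable $\tilde u$ (which transports $\varphi^n$) and the interior Galerkin velocity $u^n$: the penalty identity in the energy step uses $\pa_t\varphi^n + u^n\cdot\na\varphi^n = 0$, whereas by construction $\varphi^n$ only satisfies $\pa_t\varphi^n + \tilde u\cdot\na\varphi^n = 0$. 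To avoid this circularity, one either iterates Galerkin and transport simultaneously (updating $\varphi^n$ with the current Galerkin velocity at each step) or proves the energy bound directly at the fixed-point level, noting that at the fixed point $\tilde u = u^n$. Once this point is handled, the non-touching property of $\mcS(\varphi^n(t))$ stated in the theorem is immediate from the travel-distance estimate above.
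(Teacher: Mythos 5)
There is a genuine gap at the point you yourself flag: the circularity between the field transporting $\varphi^n$ and the velocity appearing in the energy identity is not an incidental nuisance but the crux of the construction, and neither of your two escape routes is carried out. Your option of ``proving the energy bound directly at the fixed-point level'' cannot close a Schauder argument: the self-map property $\Psi(\mathcal{B}_R)\subset\mathcal{B}_R$ and the compactness/continuity of $\Psi$ require a bound valid for \emph{every} $\tilde u\in\mathcal{B}_R$, not only at the fixed point. When $\varphi^n$ is transported by $\tilde u\neq u^n$, the computation you describe (test the $u$-equation with $u^n$, the $Q$-equation with $H^n$, the $H$-equation with $\pa_t Q^n$) produces, after integrating the penalty flux $n\varphi^n(Q^n:\na Q^n)\cdot u^n$ by parts and applying the Reynolds transport theorem with boundary velocity $\tilde u$, a leftover term of the form $\tfrac n2\int_{\pa\mcS(\varphi^n(t))}|Q^n|^2\,\big((u^n-\tilde u)\cdot\nu\big)\,d\sigma$, which has no sign, carries the factor $n$, and is not controlled by the left-hand side of \eqref{energy:approx}. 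Hence the claimed bound $\|\Psi(\tilde u)\|_{L^2(0,T_n;H^3)}^2\le\delta^{-1}E(u_0,Q_0)$ is unjustified off the fixed point, and the uniform-in-$N$ bounds you invoke for the Aubin--Lions step suffer from the same problem. Your other option, ``iterate Galerkin and transport simultaneously,'' is exactly how the paper proceeds: $\varphi^k$ is defined through the characteristics \eqref{char:ODE}--\eqref{rf:phi} of the Galerkin velocity $u^k$ itself, so the whole problem reduces to a single nonlinear system for the Galerkin coefficients solved by Carath\'eodory's theorem, and the energy identity \eqref{galerkin:energy} closes because the transported field and the tested velocity coincide; but in your proposal this route is only mentioned, not executed, so the existence part of the theorem is not actually established.

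A secondary point: at the discrete level you may not test the projected $H$-equation \eqref{galerkin:H} with $(u^n_N\cdot\na)Q^n_N$, since this function does not belong to the finite-dimensional space spanned by the $Q$-basis; only $\pa_t Q^n_N$ is an admissible test function there. The convective cancellation must therefore be done term-by-term between the forcing $((H^{k}+n\varphi^{k}Q^{k}):\na Q^{k})\cdot u^k$ in the $u$-equation and $(u^k\cdot\na)Q^k:H^k$ in the $Q$-equation, as in \eqref{ukQk}, rather than by reconstituting $(\na Q\odot\na Q):\na u$ as in the formal computation. Your travel-distance estimate and the resulting non-contact statement for small $T_n$ are fine and coincide with the paper's Step 3, but note that the time obtained there is independent of $n$ (it depends only on $\operatorname{dist}(\mcS_0,\pa\Om)$, $\delta$ and the initial energy), a uniformity that is needed later for the limit $n\to\infty$ and that a fixed-point scheme with $n$-dependent Gronwall absorption of the mismatch term would likely destroy.
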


For convenience we drop the superscript $`n$' in the following proof of Theorem \ref{exist:approx}, i.e. we will write $(u,Q,H,\varphi)$ instead of $(u^n,Q^n,H^n,\varphi^n)$. Theorem \ref{exist:approx} is proved via a modified Galerkin method based on Dirichlet eigenfunctions of the Laplace operator for $Q$ and eigenfunctions of the Stokes operator for $u$, which was introduced in \cite{abels2014well}.  We denote the Helmholtz projection by $P:L^2(\Om,\mathbb{R}^d)\ri H(\Om)$ and the Stokes operator by $A:=-P\Delta$. We have the following well-known classical results.

\begin{lemma}\label{ortho basis: Q}
There exists an orthonormal basis $\{e_n\}_{n=1}^\infty \subset H^1_0(\Om;M)\cap H^2(\Om;M)$ of $L^2(\Om;M)$ and a non-decreasing sequence of non-negative eigenvalues $\{\lambda_n\}_{n=1}^\infty$ with $\lim\limits_{n\ri\infty} \lam_n=\infty$ such that 
\begin{equation*}
    -\Delta e_n=\lambda_n e_n\ \text{ in }\Om, \quad { e_n}=0\ \text{ on }\pa\Om.
\end{equation*}
\end{lemma}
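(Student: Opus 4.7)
The plan is to reduce the matrix-valued eigenvalue problem to the scalar one by exploiting the finite-dimensional structure of $M$, which is essentially a $5$-dimensional real inner product space (with the Frobenius inner product $A:B$). The only non-routine ingredient is verifying that tensoring a scalar eigenbasis with a basis of $M$ indeed produces an $L^2(\Omega;M)$-orthonormal set of eigenfunctions for the componentwise Dirichlet Laplacian, and then invoking the standard elliptic $H^2$-regularity theory to upgrade the regularity to $H^2(\Omega;M)$.

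First, I would fix an orthonormal basis $\{M_\alpha\}_{\alpha=1}^{5}$ of $M$ with respect to the Frobenius inner product. Next, I would invoke the classical spectral theorem for the scalar Dirichlet Laplacian on the bounded $C^2$-domain $\Omega$: the compactness of the embedding $H^1_0(\Omega) \hookrightarrow L^2(\Omega)$ together with the symmetry and coercivity of the quadratic form $\int_\Omega |\nabla \phi|^2\,dx$ yields, via the Hilbert--Schmidt spectral theorem for the compact selfadjoint inverse of $-\Delta$, an $L^2(\Omega)$-orthonormal basis $\{\phi_k\}_{k=1}^{\infty}\subset H^1_0(\Omega)$ of scalar eigenfunctions with eigenvalues $0<\mu_1 \le \mu_2 \le \cdots \to \infty$.

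Then I would set $e_{k,\alpha}(x):=\phi_k(x)\,M_\alpha$ and check orthonormality in $L^2(\Omega;M)$ directly:
\begin{equation*}
\int_\Omega e_{k,\alpha}:e_{\ell,\beta}\,dx = (M_\alpha:M_\beta)\int_\Omega \phi_k\phi_\ell\,dx = \delta_{k\ell}\,\delta_{\alpha\beta}.
\end{equation*}
Completeness follows by expanding any $F\in L^2(\Omega;M)$ componentwise in the basis $\{M_\alpha\}$ and then each scalar coefficient in $\{\phi_k\}$. Each $e_{k,\alpha}$ clearly satisfies $-\Delta e_{k,\alpha}=\mu_k e_{k,\alpha}$ in $\Omega$ with $e_{k,\alpha}=0$ on $\partial\Omega$. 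Re-indexing the countable family $\{e_{k,\alpha}\}$ so that the associated eigenvalues form a non-decreasing sequence yields the desired $\{e_n\}_{n=1}^{\infty}$ with eigenvalues $\{\lambda_n\}_{n=1}^{\infty}$ tending to infinity.

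Finally, to obtain the $H^2$-regularity, I would apply the standard elliptic regularity theorem for the Dirichlet Laplacian on $C^2$-domains componentwise: since each scalar component of $e_n$ belongs to $H^1_0(\Omega)$ and solves $-\Delta(\cdot)=\lambda_n(\cdot)$ weakly with right-hand side in $L^2(\Omega)$, it lies in $H^2(\Omega)$. This provides $e_n \in H^1_0(\Omega;M)\cap H^2(\Omega;M)$, completing the statement. No substantive obstacle is anticipated; the whole argument is a direct combination of classical scalar results with the tensor structure $L^2(\Omega;M)\cong L^2(\Omega)\otimes M$.
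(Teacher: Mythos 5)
Your argument is correct and complete: tensoring a scalar Dirichlet eigenbasis $\{\phi_k\}$ with a fixed Frobenius-orthonormal basis $\{M_\alpha\}_{\alpha=1}^5$ of $M$ does produce an $L^2(\Omega;M)$-orthonormal basis of $M$-valued eigenfunctions (the matrix factor automatically enforces symmetry and tracelessness), completeness follows from the componentwise expansion you describe, and $H^2$-regularity on the $C^2$ domain is the standard elliptic estimate applied componentwise. The paper itself offers no proof -- the lemma is invoked as a ``well-known classical result,'' implicitly the spectral theorem for the compact selfadjoint inverse of the componentwise Dirichlet Laplacian acting directly on $L^2(\Omega;M)$ -- so your tensorization $L^2(\Omega;M)\cong L^2(\Omega)\otimes M$ is simply an explicit and equally valid way of realizing that classical statement; the only cosmetic difference is that the direct operator-theoretic route avoids the re-indexing step, while yours makes the fivefold multiplicity of each scalar eigenvalue visible.
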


\begin{lemma}\label{ortho basis: u}
There exists an orthonormal basis $\{v_n\}_{n=1}^\infty \subset V(\Om)\cap H^{3}(\Om,\mathbb{R}^3)\cap W^{1,\infty}(\Om,\mathbb{R}^3)$ of $H(\Om)$ and a non-decreasing sequence of positive eigenvalues $\{\omega_n\}_{n=1}^\infty$ with $\lim\limits_{n\ri\infty} \omega_n=\infty$ such that 
\begin{equation*}
Av_n=\omega_n v_n\ \text{ in }\Om.
\end{equation*}
\end{lemma}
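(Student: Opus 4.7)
The plan is to apply the spectral theorem for compact, self-adjoint, positive operators to $A^{-1}$, and then bootstrap the regularity of the eigenfunctions via elliptic estimates for the stationary Stokes problem. First I set $D(A):=V(\Om)\cap H^2(\Om;\mathbb{R}^3)$. For any $f\in H(\Om)$, the Lax--Milgram lemma applied to the bilinear form $a(u,v):=\int_\Om \na u:\na v\,dx$ on $V(\Om)$, which is coercive by Poincaré's inequality, produces a unique $u\in V(\Om)$ with $a(u,\varphi)=(f,\varphi)_{L^2}$ for every $\varphi\in V(\Om)$. Classical $H^2$-regularity for the stationary Stokes system on a bounded $C^2$ domain (Cattabriga/Solonnikov/Temam) then upgrades $u$ to $V(\Om)\cap H^2(\Om;\mathbb{R}^3)$, so $u=A^{-1}f\in D(A)$ and $A^{-1}:H(\Om)\ri H(\Om)$ is a bounded linear map. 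Factoring $A^{-1}$ through the compact embedding $H^2(\Om)\hookrightarrow L^2(\Om)$ (Rellich--Kondrachov) shows $A^{-1}$ is compact.

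Next I verify that $A^{-1}$ is self-adjoint and positive on $H(\Om)$. Writing $v=A^{-1}f$, $w=A^{-1}g$ and testing the weak formulation for $v$ against $w\in V(\Om)$ and conversely gives
\begin{equation*}
(A^{-1}f,g)_{L^2}=(v,g)_{L^2}=\int_\Om \na v:\na w\,dx=(f,w)_{L^2}=(f,A^{-1}g)_{L^2},
\end{equation*}
and the same identity with $g=f$ yields $(A^{-1}f,f)_{L^2}=\|\na A^{-1}f\|_{L^2}^2\ge 0$, with equality iff $f=0$. The spectral theorem for compact, self-adjoint, positive operators on a separable Hilbert space then provides an orthonormal basis $\{v_n\}_{n=1}^\infty$ of $H(\Om)$ and a non-increasing sequence $\mu_n>0$ with $\mu_n\searrow 0$ such that $A^{-1}v_n=\mu_n v_n$. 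Setting $\omega_n:=\mu_n^{-1}$ reverses the monotonicity and furnishes the claimed non-decreasing sequence $0<\omega_1\le\omega_2\le\cdots$ with $\omega_n\ri\infty$ and $Av_n=\omega_n v_n$ in $\Om$.

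It remains to boost the regularity of each $v_n$ from $H^2$ up to $H^3\cap W^{1,\infty}$. Each eigenfunction solves the Stokes problem
\begin{equation*}
-\Delta v_n+\na p_n=\omega_n v_n,\quad \operatorname{div}v_n=0\ \text{in }\Om,\quad v_n|_{\pa\Om}=0,
\end{equation*}
for an associated pressure $p_n$ obtained via de Rham's theorem. Since the right-hand side $\omega_n v_n$ already lies in $H^1(\Om;\mathbb{R}^3)$ by the first step, applying the $H^3$ version of Stokes regularity upgrades $v_n$ to $H^3(\Om;\mathbb{R}^3)$, and the three-dimensional Sobolev embedding $H^3(\Om)\hookrightarrow C^1(\overline\Om)\hookrightarrow W^{1,\infty}(\Om)$ closes the chain of inclusions.

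The main technical point is this last regularity step: obtaining $H^3$ estimates for the Stokes system strictly requires slightly more boundary smoothness than the global $C^2$ assumption of Theorem \ref{exist:main}, so one tacitly imposes $\pa\Om\in C^3$ for the purpose of this auxiliary lemma, or equivalently one combines interior $H^3$ regularity in the bulk with local boundary straightening to extract the additional derivative. Everything else is a direct consequence of the spectral theorem for compact self-adjoint operators and the standard functional-analytic framework for the Stokes operator.
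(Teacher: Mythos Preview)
The paper does not actually prove this lemma; it introduces Lemmas~\ref{ortho basis: Q} and~\ref{ortho basis: u} with the sentence ``We have the following well-known classical results'' and states them without argument. Your proposal supplies the standard proof---Lax--Milgram to build $A^{-1}$, compactness via Rellich, self-adjointness and positivity from the symmetry of the Dirichlet form, then the spectral theorem, and finally a regularity bootstrap---and it is correct. Your observation about the boundary regularity is also well taken: the $H^3$ Stokes estimate genuinely requires $\partial\Omega\in C^3$ (or $C^{2,1}$), which is slightly stronger than the $C^2$ hypothesis stated in Theorem~\ref{exist:main}; this is a minor imprecision in the paper rather than in your argument.
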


For $k\in \mathbb{N}^+$, we define the finite-dimensional spaces
\begin{align*}
    &E_k:=\mathrm{Span}\{e_1,...,e_k\}\subset H^1_0(\Om;M)\cap H^2(\Om;M),\\
    &V_k:=\mathrm{Span}\{v_1,...,v_k\}\subset V(\Om)\cap H^{3}(\Om,\mathbb{R}^d)\cap W^{1,\infty}(\Om;\mathbb{R}^d).
\end{align*}
and the two orthogonal projections 
\begin{equation*}
    \pi_k:L^2(\Om;M)\ri E_k,\quad \mathcal{P}_k:H(\Om)\ri V_k.
\end{equation*}

We seek for the solutions of the form
\begin{equation}\label{approx-solform}
\begin{split}
    &u^{k}(x,t)=\sum\limits_{i=1}^k d_i(t)v_i(x),\quad Q^{k}(x,t)=\sum\limits_{i=1}^k q_i(t)e_i(x),
    \quad  \varphi^{k}\in \mathrm{Char}(\Om); 
\end{split}
\end{equation}
which solves the following system: for all $l=1,...,k,$

\begin{align}
\label{galerkin:Q}    & \int_{\Om} \left( \pa_t Q^{k}:e_l-\Gamma H^{k}:e_l+(u^{k}\cdot\na)Q^{k}:e_l+(Q^{k}\Sigma(u^{k})-\Sigma(u^{k})Q^{k}):e_l  \right)\,dx=0,\\
\label{galerkin:u}   & \int_\Om \bigg( \pa_t u^{k}\cdot v_l-(u^{k}\otimes u^{k}):\na v_l+(\mu+n\varphi^{k})(D(u^{k}):D(v_l))
     +\delta \nabla\Delta u^{k} \cdot\nabla\Delta v_l\\
   \nonumber  &\qquad \qquad \qquad +((H^{k}+n\varphi^{k}Q^{k}):\na Q^{k})\cdot v_l+(Q^{k}H^{k}-H^{k}Q^{k}):\na v_l\bigg)\,dx=0,\\
\label{galerkin:tau}    & \int_0^T\int_{\Omega} \varphi^{k}\left[\partial_t\theta+(u^{k}\cdot\nabla)\theta\right]\ dx\ dt= - \int_{\Omega} \varphi^{k}(x,0)\theta(x,0)\ dx, \quad \forall \ \theta\in C^1([0,T]\times \Om), \\
\label{galerkin:initial data}     & u^{k}(x,0)=\mathcal{P}_k(u_0),\quad Q^{k}(x,0)=\pi_k(Q_0),\quad \varphi^{k}(x,0)=\varphi_0,
\end{align}
where
\begin{equation}\label{def:Hk}
H^k=\pi^k\left(\Delta Q^k - \frac{\partial f_b}{\partial Q}(Q^k)-n\varphi^k Q^k\right).
\end{equation}
Observe that due to the form \eqref{def:Hk}, $H^{k}(x,t)=\sum\limits_{i=1}^k h_i(t)e_i(x)$. If we multiply equation \eqref{def:Hk} by $e_l$, integrate over $\Omega$ and do the integration by parts, we obtain:
\begin{equation}\label{galerkin:H}
\int_{\Om} \left( \na Q^{k}:\na e_l+n\varphi^{k}Q^{k}:e_l+\f{\pa f_b}{\pa Q}(Q^k):e_l+H^{k}:e_l\right)\,dx=0.
\end{equation}

\begin{proposition}\label{exist-galerkin}
Assume that $u_0 \in H(\Omega)$, $Q_0 \in H^1_0(\mathcal{F}_0;M)$, $\varphi_0$ is the characteristic function of $\mcS_0$ and the boundaries $\partial \Omega$, $\partial \mathcal{S}$ are of class $C^2$.
There exists a maximal time $T>0$ (depending on $\operatorname{dist}(S(\varphi_0),\partial\Omega)$, $\delta$, $u_0$, $Q_0$) such that the system \eqref{galerkin:Q}--\eqref{def:Hk} admits a solution $(u^{k},Q^{k},H^{k},\varphi^{k})$ which satisfies for a.e. $t\in (0,T)$, 
\begin{multline}\label{galerkin:energy}
E^{app}(u^{k},Q^{k})(t)+\int_0^t \int_{\Omega}(\mu+n\varphi^{k})|D u^{k}|^2\,dxds
+\int_0^t\int_{\Omega}\Gamma |H^{k}|^2 \,dx\,ds\\+\int_0^t\int_{\Omega} \delta |\nabla\Delta u^{k}|^2 \,dx\,ds
\le E(u_0,Q_0),
\end{multline}
where
\begin{equation*}
E^{app}(u^k,Q^k)(t)\stackrel{def}{=} \frac{1}{2}\int_{\Omega}|u^k|^2\,dx+\int_{\Omega}\left(\frac{1}{2}|\nabla Q^k|^2+ f_{b}(Q^k)\right)\,dx+n\int_{\Omega}\frac{\varphi^k}{2}|Q^k|^2(t,x)\,dx.
\end{equation*}
\end{proposition}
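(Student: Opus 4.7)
The strategy is a Schauder fixed-point argument for the characteristic function $\varphi^k$, coupled with the Cauchy--Lipschitz theorem for the coefficients $(d_i(t), q_i(t))$, followed by an energy identity. First I would use \eqref{def:Hk} to express $H^k$ as a polynomial in the $q_i(t)$ and in the bounded linear functionals $\int_\Om \varphi^k Q^k : e_l\,dx$; substituting this expression into \eqref{galerkin:Q}--\eqref{galerkin:u} turns them into a closed system of $2k$ ordinary differential equations for $(d_1,\ldots,d_k,q_1,\ldots,q_k)$ whose right-hand side is locally Lipschitz in $(d_i,q_i)$ and depends on $\varphi^k(t,\cdot)$ only through bounded integrals against $L^\infty$-weights.

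Next I would set up the fixed-point map $\mathcal{T}:\tilde\varphi\mapsto\varphi$ on the closed convex set
\begin{equation*}
K_{T^*}:=\left\{\tilde\varphi\in C([0,T^*];L^p(\Om)):\ \tilde\varphi(t)\in\mathrm{Char}(\Om),\ \tilde\varphi(0)=\varphi_0,\ \mbox{dist}(\mcS(\tilde\varphi(t)),\pa\Om)\geq \tfrac12\mbox{dist}(\mcS(\varphi_0),\pa\Om)\right\}
\end{equation*}
as follows. Given $\tilde\varphi\in K_{T^*}$, solve the ODE system above to obtain $u^k=\sum_i d_i v_i\in V_k\subset H^3(\Om)\cap W^{1,\infty}(\Om)$; use the resulting Lipschitz, divergence-free, $\pa\Om$-tangent field $u^k$ to define the backward characteristic flow $X(\cdot;t,x)$, and set $\mathcal{T}(\tilde\varphi)(t,x):=\varphi_0(X(0;t,x))$, which is a characteristic function and the unique solution of \eqref{galerkin:tau} in the $C^1$-test-function sense. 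The $\delta$-regularizing term in \eqref{galerkin:u} is crucial: it guarantees an a priori $H^3$ bound on $u^k$ and hence the Lipschitz regularity required for the flow to exist. For $T^*$ small (depending on $\mbox{dist}(\mcS(\varphi_0),\pa\Om)$, $\delta$, $u_0$, $Q_0$), the set $K_{T^*}$ is $\mathcal{T}$-invariant, $\mathcal{T}(K_{T^*})$ is precompact in $C([0,T^*];L^p)$ by equicontinuity of the flow maps, and $\mathcal{T}$ is continuous; Schauder's theorem then yields a fixed point.

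For the energy identity \eqref{galerkin:energy} I would test \eqref{galerkin:u} against $u^k$, \eqref{galerkin:Q} against $H^k$, \eqref{galerkin:H} against $\pa_t Q^k$, and the transport equation \eqref{galerkin:tau} against the $C^1$ function $\tfrac{n}{2}|Q^k|^2$. Testing \eqref{galerkin:H} against $\pa_t Q^k$ yields $\tfrac{d}{dt}\int(\tfrac12|\na Q^k|^2+f_b(Q^k)) + \int n\varphi^k Q^k:\pa_t Q^k + \int H^k:\pa_t Q^k = 0$, and the last contribution cancels the matching term in the $Q$-equation. The antisymmetric commutator in the $Q$-equation and the forcing $(Q^kH^k-H^kQ^k):\na u^k$ in the $u$-equation cancel via the identity $(Q\Sigma-\Sigma Q):H=(QH-HQ):\Sigma$ for symmetric traceless $Q,H$ and antisymmetric $\Sigma$ (see \cite[Lemma 2.1]{abels2014well}), while the stretching term $(u^k\cdot\na)Q^k:H^k$ cancels the forcing $(H^k:\na Q^k)\cdot u^k$. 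Finally, the residual $\varphi^k$-dependent terms $\int n\varphi^k Q^k:\pa_t Q^k + \int n\varphi^k (u^k\cdot\na)Q^k:Q^k$ reassemble as $\tfrac{d}{dt}\int\tfrac{n\varphi^k}{2}|Q^k|^2$ thanks to \eqref{galerkin:tau} tested with $\tfrac{n}{2}|Q^k|^2$ (valid since $Q^k$ is smooth in $t,x$ on the finite-dimensional space $E_k$), reconstituting the full discrete energy $E^{app}$.

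The main obstacle is not the fixed-point step but the three-way cancellation in the energy identity: the penalization $n\varphi^k Q^k$ in \eqref{def:Hk} produces extra terms in both the $H$-equation and (via the forcing in \eqref{galerkin:u}) the $u$-equation, and one must verify that after all cancellations these terms recombine exactly into $\tfrac{d}{dt}\int\tfrac{n\varphi^k}{2}|Q^k|^2$ with no residue. This relies critically on the specific form of \eqref{def:Hk} and on using the transport equation \eqref{galerkin:tau} with the non-standard test function $\tfrac{n}{2}|Q^k|^2$, and is the main deviation from the Galerkin argument for the fixed-domain Beris--Edwards system in \cite{abels2014well}.
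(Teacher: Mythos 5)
Your energy-estimate step is essentially the paper's own argument: the same four pairings (momentum equation against $u^k$, the $Q$-equation against $H^k$, \eqref{galerkin:H} against $\pa_t Q^k$), the same cancellation of the co-rotational terms via \cite[Lemma 2.1]{abels2014well}, and the same cancellation of $(u^k\cdot\na)Q^k:H^k$ against $(H^k:\na Q^k)\cdot u^k$. The only difference is cosmetic: the paper recombines the two penalization terms $n\int_\Omega\varphi^k\,Q^k:\pa_tQ^k$ and $n\int_\Omega\varphi^k\,(Q^k:\na Q^k)\cdot u^k$ by the divergence theorem (using $\operatorname{div}u^k=0$) into a flux over $\pa\mcS(\varphi^k(t))$ and then invokes Reynolds' transport theorem for the moving solid, while you test \eqref{galerkin:tau} with $\tfrac n2|Q^k|^2$; the two manipulations are equivalent, and yours avoids boundary integrals at the price of justifying $\tfrac n2|Q^k|^2$ as an admissible test function (the coefficients $q_i(t)$ are only absolutely continuous in time, so a mollification in time and a cutoff at the endpoint are needed) --- a minor technical point.

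The existence step is where you deviate (Schauder fixed point in $\varphi$ plus Cauchy--Lipschitz for the coefficients, versus the paper's Carath\'eodory argument for the coupled ODE with $\varphi^k$ slaved to the flow as in \eqref{rf:phi}), and here there is a genuine gap: the claim that $K_{T^*}$ is $\mathcal{T}$-invariant (and $\mathcal{T}(K_{T^*})$ precompact) for a $T^*$ depending only on $\operatorname{dist}(\mcS(\varphi_0),\pa\Om)$, $\delta$, $u_0$, $Q_0$. Off the fixed point, $\tilde\varphi$ is \emph{not} transported by the velocity $u^k$ you build from it, so the terms $n\int_\Omega\tilde\varphi\,Q^k:\pa_tQ^k$ and $n\int_\Omega\tilde\varphi\,(Q^k:\na Q^k)\cdot u^k$ do not recombine into an exact time derivative (indeed $\pa_t\tilde\varphi$ need not even exist for a generic element of $K_{T^*}$), and hence the energy inequality \eqref{galerkin:energy} is not available along the map $\mathcal{T}$. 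But that inequality is precisely the source of the $\delta$-weighted bound on $\|\na\Delta u^k\|_{L^2_tL^2_x}$, and therefore of the $L^1_tL^\infty_x$ control of $u^k$ which bounds the displacement of the solid; without it, crude Gronwall bounds for the ODE carry constants depending on $n$ (and $k$), so both the guaranteed existence time of the ODE solution and the distance constraint in $K_{T^*}$ degenerate as $n\to\infty$, and the uniform choice of $T^*$ is unjustified. The repair is exactly the structure of the paper's proof: obtain a local solution on a possibly short, $(k,n)$-dependent interval (your fixed point run on such an interval works, as does the paper's Carath\'eodory argument), then use the energy identity \eqref{energy:gal} --- which \emph{does} hold along the actual solution, where $\varphi^k$ is transported by $u^k$ --- to continue the solution, and finally invoke the estimate $\int_0^t\|u^k(s)\|_{L^\infty(\Omega)}\,ds\le C\sqrt{t}\,\|\na\Delta u^k\|_{L^2(0,t;L^2(\Omega))}$ (Step 3 of the paper) to show the solid stays at distance $\ge\alpha$ from $\pa\Om$ up to a time $T$ depending only on $\operatorname{dist}(\mcS(\varphi_0),\pa\Om)$, $\delta$ and $E(u_0,Q_0)$. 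As written, your scheme proves local existence but does not deliver the time $T$ asserted in Proposition \ref{exist-galerkin}; adding this continuation argument closes the gap.
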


\begin{proof}
We consider the characteristics  $X^{k}_{s,t}:\Omega\rightarrow \Omega$ associated to the flow $u^{k}$ such that
\begin{equation}\label{char:ODE}
\frac{d}{dt}X_{s,t}^k(y)= u^{k}(t,X^k_{s,t}(y)),\quad X^k_{s,s}(y)=y \quad\forall\ (s,t,y)\in (0,T)^2\times\Omega.
\end{equation}
If $u^k\in L^2(0,T;H^3(\Omega))$, the mapping $X_{s,t}^k$ is well-defined, one-to-one and invertible. We can represent the solution to equation \eqref{galerkin:tau} in the following way:
\begin{equation}\label{rf:phi}
\varphi^k(t,y)=\varphi_0(X^k_{0,t}(y)).
\end{equation}
Moreover, we have 
\begin{equation}\label{rf:Sphi}
S(\varphi^k(t))=X^k_{0,t}(S(\varphi_0)).
\end{equation}

\textbf{\underline{Step 1: Existence of solution.}}

Let us define the following identification operators:
\begin{equation*}
I_Q: Q^k(t)\in E_k\rightarrow \mathcal{Q}(t)=(q_1(t),\cdots,q_k(t))\in \mathbb{R}^k.
\end{equation*}
and
\begin{equation*}
I_u: u^k(t)\in V_k\rightarrow \mathcal{U}(t)=(d_1(t),\cdots,d_k(t))\in \mathbb{R}^k
\end{equation*}
The equations \eqref{galerkin:Q}--\eqref{galerkin:u} form a nonlinear differential system for the function $\mathcal{Z}(t)= (\mathcal{Q}(t),\mathcal{U}(t))^{\top}$:
\begin{equation}\label{ODE1}
\mathcal{Z}'(t)+ \mathcal{A}(t,\mathcal{Z}(t))=0,\quad \mathcal{Z}(0)=(I_Q(\pi_k(Q_0)),\ I_u(\mathcal{P}_k(u_0))),
\end{equation}
where $\mathcal{A}:(t,\mathcal{Z}(t))\in (0,T)\times\mathbb{R}^{2k}\rightarrow \mathcal{A}(t,\mathcal{Z}(t))\in \mathbb{R}^{2k}$ is defined as:
\begin{equation*}
\mathcal{A}(t,\mathcal{Z}(t)) = \mathcal{B}(t,\mathcal{Z}(t)) + \mathcal{C}(t,\mathcal{Q}(t)).
\end{equation*}
Here $\mathcal{B}(t,\cdot)$ is a polynomial function in $\mathcal{Z}(t)$:
\begin{equation*}
\mathcal{B}(t,\mathcal{Z}(t))= (\mathcal{B}_1(t,\mathcal{Z}(t)) , \mathcal{B}_2(t,\mathcal{Z}(t)))^{\top},
\end{equation*}
where, for $l,m=1,2,\cdots k$:
\begin{equation*}
[ \mathcal{B}_1(t,\mathcal{Z}(t))]_l= \mathcal{U}_i(t)\mathcal{Q}_i(t)\int_{\Om}(v_i\cdot\na)e_i:e_l +\mathcal{U}_i(t)\mathcal{Q}_i(t) \int_{\Om}(e_i\Sigma(v_i)-\Sigma(v_i)e_i):e_l,
\end{equation*}
\begin{equation*}
[\mathcal{B}_2(t,\mathcal{Z}(t))]_m= -\mathcal{U}_i(t)\mathcal{U}_j(t)\int_{\Om}(v_{i}\otimes v_{j}):\na v_m+\mathcal{U}_i(t)\int_{\Om} (\mu+n\varphi^{k})(D(v_{i}):D(v_m))
     + \mathcal{U}_i(t)\int_{\Om}\delta\nabla\Delta v_{i}\cdot\nabla\Delta v_m.
\end{equation*}
The other part: $$\mathcal{C}(t,\mathcal{Q}(t))=(\mathcal{C}_1(t,\mathcal{Q}(t)),\ \mathcal{C}_2(t,\mathcal{Q}(t)))^{\top}$$ with
\begin{equation*}
[\mathcal{C}_1(t,\mathcal{Q}(t))]_l= \mathcal{G} (\mathcal{Q}(t))_j\int_{\Om}\Gamma e_j:e_l ,
\end{equation*}
\begin{equation*}
\begin{split}
[\mathcal{C}_2(t,\mathcal{Q}(t))]_m=& \mathcal{Q}_i(t)\mathcal{G} (\mathcal{Q}(t))_j\int_{\Om} ( e_ie_j-e_je_i)\cdot \nabla v_m + \mathcal{Q}_i(t) \mathcal{G} (\mathcal{Q}(t))_j \int_{\Om} (v_m\cdot\nabla)e_i\cdot e_j\\
&+\mathcal{Q}_i(t)\mathcal{Q}_j(t)\int_\Omega n\varphi^k (v_m\cdot \nabla)e_i\cdot e_j.
\end{split}
\end{equation*}
In the above expression, 
\begin{equation*}
 \mathcal{G} (\mathcal{Q}(t))=I_Q \circ  \pi_k\circ F\circ (I_Q)^{-1}(\mathcal{Q}(t)),
\end{equation*}
 where
 \begin{equation*}
 F:  E_k\subset H^2(\Om;M)\rightarrow L^2(\Om;M) \mbox{ 
 given by  }F(Q)=\Delta Q - \frac{\partial f_b(Q)}{\partial Q}-n\varphi Q.
 \end{equation*}
 The regularity of Laplace operator, properties of polynomial function $\frac{\partial f_b(Q)}{\partial Q}$ and projection operator $\pi_k$ imply that $\mathcal{G} (\mathcal{Q}(t))$ is continuous in $\mathcal{Q}(t)$. Since $\mathcal{B}(t,\cdot)$ is a polynomial function in $\mathcal{Z}(t)$, the expression \eqref{char:ODE} of $\varphi^k$ and the expression $\mathcal{C}(t,\cdot)$ imply that $\mathcal{A}(t,\mathcal{Z}(t))$ is continuous in $\mathcal{Z}(t)$. Thus, by Carath\'{e}odory's existence theorem we obtain an absolutely continuous solution $\mathcal{Z}(t)$ to the equation \eqref{ODE1} on $[0,T^*)$. We can use the energy estimate \eqref{energy:gal} to extend this interval of existence up to the contact time $T$ (see step 3).

\textbf{\underline{Step 2: Energy estimate.}}

The form of solution in \eqref{approx-solform} allows us to conclude $\partial_t Q^k, \Delta Q^k \in E_k$. We multiply \eqref{galerkin:u} by $d_{l}$ and \eqref{galerkin:Q} by $h_l$, sum it over $l=1,2,\cdots k$, integrate by parts to obtain:
\begin{multline}\label{ukQk}
\int_{\Om} \bigg(-\pa_t Q^{k}:H^k+\Gamma|H^{k}|^2\bigg)\ dx + \frac{1}{2}\frac{d}{dt}\int_\Om |u^{k}|^2 \ dx+ 
    \int_\Om \bigg( (\mu+n\varphi^{k})|D(u^{k})|^2
     +\delta |\nabla\Delta u^{k}|^2 \bigg)\,dx\\+ \frac{n}{2}\int\limits_{\pa S(\varphi^k)}|Q^k|^2 (u^k\cdot \nu)\ d\sigma=0.
\end{multline}
Here we have used the following identities:
\begin{equation*}
\int_\Om \varphi^{k}(Q^{k}:\na Q^{k})\cdot u^k \,dx=\f12\int\limits_{\pa S(\varphi^k)}|Q^k|^2 (u^k\cdot \nu)\ d\sigma, \quad \mbox{as}\quad \operatorname{div}u^k=0,
\end{equation*}
and 
\begin{equation*}
-(Q^k\Sigma^k-\Sigma^k Q^k)\cdot H^k+(Q^kH^k-H^kQ^k):\na u^k=0,
\end{equation*}
which follows from
\cite[Lemma 2.1]{abels2014well}.
Observe that $\pa_t Q^k=\sum_{i=1}^k q_i'(t)e_i(x) \in E_k$. If we multiply \eqref{galerkin:H} by $q_l'$ and sum it over $l=1,2\cdots k$, we have
\begin{equation*}
\frac{1}{2}\frac{d}{dt}\int_{\Om} |\na Q^{k}|^2\ dx +\int_{\Om} \left(\frac{n}{2}\varphi^{k}\pa_t|Q^{k}|^2+\f{\pa f_b(Q^{k})}{\pa Q^k}:\pa_t Q^{k}+H^{k}:\pa_t Q^k\right)\,dx=0.
\end{equation*}
Moreover, we can write it in the following way:
\begin{equation}\label{subs in Hk}
    \begin{split}
\int\limits_{\Omega} -\pa_tQ^k : H^k \,dx &= \frac{d}{dt}\int\limits_{\Omega} \left(\f12|\na Q^k|^2+f_b(Q^k)\right)\ dx +\int_{\Om} \frac{n}{2}\varphi^{k}\pa_t|Q^{k}|^2\ dx.
    \end{split}
\end{equation}
If we substitute \eqref{subs in Hk} in the equation \eqref{ukQk}, we obtain
\begin{multline*}
\frac{d}{dt}\int\limits_{\Omega} \left(\f12|u^k|^2+\f12|\na Q^k|^2+f_b(Q^k)\right)\ dx +\int_{\Om} \Gamma|H^{k}|^2\ dx + 
    \int_\Om \bigg( (\mu+n\varphi^{k})|D(u^{k})|^2
     +\delta |\nabla\Delta u^{k}|^2 \bigg)\,dx\\+ \int_{S(\varphi^k)} \frac{n}{2}\pa_t|Q^{k}|^2\ dx+\f{n}2\int\limits_{\pa S(\varphi^k)}|Q^k|^2 (u^k\cdot \nu)\ d\sigma=0.
\end{multline*}
We use Reynolds transport theorem to rewrite the above relation as:
\begin{multline}\label{energy:gal}
\frac{d}{dt}\int\limits_{\Omega} \left(\f12|u^k|^2+\f12|\na Q^k|^2+f_b(Q^k)\right)\ dx+\frac{d}{dt}\int\limits_{S(\varphi^k)}\frac{n}{2}|Q^{k}|^2\ dx \\ +\int_\Om \bigg( \Gamma|H^{k}|^2 + 
(\mu+n\varphi^{k})|D(u^{k})|^2
     +\delta |\nabla\Delta u^{k}|^2 \bigg)\,dx=0.
\end{multline}
Thus, we have established the energy estimate \eqref{galerkin:energy} in the Galerkin level.

\textbf{ \underline{Step 3: Body away from boundary.}} We want to show that there exists $T$  such that
\begin{equation*}
\operatorname{dist}(S(\varphi^k(t)),\partial\Omega)>0.
\end{equation*}
Let us assume that initially the body is away from the boundary, i.e,
\begin{equation*}
\operatorname{dist}(S(\varphi_0),\partial\Omega)= 2\alpha>0.
\end{equation*}
Observe that 
\begin{equation*}
X^k_{0,t}(y)=X^k_{0,0}(y)+\int\limits_0^t \frac{\partial}{\partial s}X^k_{0,s}(y)\ ds=y+\int\limits_0^t \frac{\partial}{\partial s}X^k_{0,s}(y)\ ds.
\end{equation*}
We know from \eqref{rf:Sphi} that for any $y\in S(\varphi_0)$, $X^k_{0,t}(y)\in S(\varphi^k(t))$. Now for any $y\in S(\varphi_0)$ and $x\in\partial\Omega$, we can write:
\begin{equation*}
|x-y|\leq |x-X^k_{0,t}(y)|+|X^k_{0,t}(y)-y|.
\end{equation*}
We can use the triangle inequality, the definition \eqref{char:ODE} to write
\begin{equation*}\label{X:dist}
|X^k_{0,t}(y)-x|\geq |x-y| - |X^k_{0,t}(y)-y|\geq 2\alpha - |\int\limits_0^t \frac{\partial}{\partial s}X^k_{0,s}(y)\ ds|\\ = 2\alpha-|\int\limits_0^t u^k(s,X^k_{0,s}(y))\ ds| .
\end{equation*}
Now using the embedding $H^3(\Omega)\hookrightarrow L^{\infty}(\Omega)$ and the estimate \eqref{energy:gal}, we deduce
\begin{equation*}
|\int\limits_0^t u^k(s,X^k_{0,s}(y))\ ds|\leq \int\limits_0^t \|u^k(s)\|_{L^{\infty}(\Omega)}\ ds\leq  \|\nabla\Delta u^k\|_{L^2(0,T;L^{2}(\Omega))}\sqrt{T} \leq \frac{\sqrt{T}}{\delta}E(u_0,Q_0).
\end{equation*}
In particular, by using above two estimates, we can choose $T$ (depending on $\operatorname{dist}(S(\varphi_0),\partial\Omega)$, $\delta$, $u_0$, $Q_0$) such that
\begin{equation*}
\operatorname{dist}(S(\varphi^k(t)),\partial\Omega)\geq\alpha>0.
\end{equation*}
\end{proof}
\subsection{Convergence of the Galerkin Scheme}
In Proposition \ref{exist-galerkin}, we have already constructed a solution $(u^{k},Q^{k},H^{k},\varphi^{k})$ to the problem \eqref{galerkin:Q}--\eqref{galerkin:H}. In this section, we plan to establish Theorem \ref{exist:approx} by passing to the limit in \eqref{galerkin:Q}--\eqref{galerkin:H} as $k\rightarrow\infty$ to recover the solution of \eqref{ws:Qn}--\eqref{ws:solidn}, i.e. of the $(n,\delta)$ approximation system.
\begin{proof} [Proof of \cref{exist:approx}]
 We have the following bounds regarding the space regularity from the energy estimate \eqref{galerkin:energy}:
 \begin{equation}\label{recall:energy}
 \|u^k\|_{L^2(0,T; V(\Omega)\cap H^3(\Omega))} + \|u^k\|_{L^{\infty}(0,T; H(\Omega))} + \|Q^k\|_{L^2(0,T; H^2(\Omega))} + \|Q^k\|_{L^{\infty}(0,T; H^1(\Omega))} \leq E(u_0,Q_0).
 \end{equation}
 We want to divide our proof in several steps.

 \underline{\textbf{Step 1: Time regularity.}}
 Let us start with the derivation of the regularity in time. Let us start with the estimate for the quantity $\partial_t u^k$. Observe that we have from the equation \eqref{galerkin:u} that for all $v_l \in V_k$:
 \begin{multline}\label{recalleq:uk}
\left| \int_\Om  \pa_t u^{k}\cdot v_l \ dx\right| \leq \int_\Om \bigg(|(u^{k}\otimes u^{k}):\na v_l|+|(\mu+n\varphi^{k})(D(u^{k}):D(v_l))|
     +\delta |\nabla\Delta u^{k} \cdot\nabla\Delta v_l|\\
    +|((H^{k}+n\varphi^{k}Q^{k}):\na Q^{k})\cdot v_l|+|(Q^{k}H^{k}-H^{k}Q^{k}):\na v_l|\bigg)\,dx.
   \end{multline}
   Using the H\"{o}lder's inequality and the embedding $H^2(\Omega)\hookrightarrow L^{\infty}(\Omega)$, we can have the following estimates 
   \begin{equation}\label{ut1}
  \int_\Om |(u^{k}\otimes u^{k})(t):\na v_l|\ dx \leq \|u^{k}(t)\|_{L^2(\Omega)}\|u^{k}(t)\|_{L^2(\Omega)}\|\nabla v_{l}\|_{L^{\infty}(\Omega)} \leq \|u^{k}(t)\|_{L^2(\Omega)}\|u^{k}(t)\|_{L^2(\Omega)}\|v_{l}\|_{H^{3}(\Omega)},
   \end{equation} 
   \begin{equation}\label{ut2}
   \int_\Om |(\mu+n\varphi^{k})(D(u^{k}(t)):D(v_l))| \ dx \leq C\|\nabla u^{k}(t)\|_{L^2(\Omega)}\|v_{l}\|_{H^{1}(\Omega)},
   \end{equation}
\begin{equation}\label{ut2a}
   \int_\Om \delta |\nabla\Delta u^{k} \cdot\nabla\Delta v_l|\ dx \leq \delta\|\nabla\Delta u^{k}\|_{L^2(\Omega)} \|\nabla\Delta v_l\|_{L^2(\Omega)} \leq C\|\nabla\Delta u^{k}\|_{L^2(\Omega)}\| v_l\|_{H^3(\Omega)},
\end{equation}
   \begin{gather}\label{ut3}
  \int_\Om |((H^{k}+n\varphi^{k}Q^{k})(t):\na Q^{k}(t))\cdot v_l|\ dx  \leq \int_\Om |(H^{k}(t):\na Q^{k}(t))\cdot v_l|\ dx + n\int_{\Om} |(Q^{k}(t):\na Q^{k}(t))\cdot v_l|\ dx\notag \\ \leq \|H^{k}(t)\|_{L^2(\Omega)}\|\na Q^{k}(t)\|_{L^2(\Omega)}\|v_{l}\|_{L^{\infty}(\Omega)} + n\| Q^{k}(t)\|_{L^2(\Omega)}\|\na Q^{k}(t)\|_{L^2(\Omega)}\|v_{l}\|_{L^{\infty}(\Omega)}\notag\\ 
  \leq  C\|H^{k}(t)\|_{L^2(\Omega)}\|\na Q^{k}(t)\|_{L^2(\Omega)}\|v_{l}\|_{H^{2}(\Omega)} + C\| Q^{k}(t)\|_{L^2(\Omega)}\|\na Q^{k}(t)\|_{L^2(\Omega)}\|v_{l}\|_{H^{2}(\Omega)}
  \end{gather}
  and 
  \begin{equation}\label{ut4}
   \int_\Om |Q^k H^k:\nabla v_l| \ dx  \leq \|Q^{k}(t)\|_{L^2(\Omega)}\|H^{k}(t)\|_{L^2(\Omega)}\|\nabla v_{l}\|_{L^{\infty}(\Omega)} \leq \|Q^{k}(t)\|_{L^2(\Omega)}\|H^{k}(t)\|_{L^2(\Omega)}\|v_{l}\|_{H^{3}(\Omega)}.
   \end{equation}
   Thus combination of the equation \eqref{recalleq:uk} and the estimates \eqref{ut1}--\eqref{ut4} yields: 
   \begin{equation*}
   \left| \int_\Om  \pa_t u^{k}\cdot v_l \ dx\right| \leq Ca_n(t)\|v_l\|_{H^3(\Omega)}\mbox{ for all }v_l\in V_k,
   \end{equation*}
   where $a_n(t)$ is defined by 
   \begin{multline}\label{eq:an}
   a_n(t):= \|u^{k}(t)\|_{L^2(\Omega)}\|u^{k}(t)\|_{L^2(\Omega)} + \|\nabla u^{k}(t)\|_{L^2(\Omega)} + \|\nabla\Delta u^{k}\|_{L^2(\Omega)}+ \|H^{k}(t)\|_{L^2(\Omega)}\|\na Q^{k}(t)\|_{L^2(\Omega)}\\+ \|Q^{k}(t)\|_{L^2(\Omega)}\|\na Q^{k}(t)\|_{L^2(\Omega)}+ \|Q^{k}(t)\|_{L^2(\Omega)}\| H^{k}(t)\|_{L^2(\Omega)}.
   \end{multline}
   Due to the fact that $\int_\Om  \pa_t u^{k}\cdot v_l=0$, for all $v_l \perp V_k$, we obtain
   \begin{equation*}
   \left| \int_\Om  \pa_t u^{k}\cdot v_l \ dx\right| \leq C a_n(t)\|v_l\|_{H^3(\Omega)}, \quad \forall \ v_l \in H^3(\Omega),
   \end{equation*}
   which implies 
     \begin{equation*}
     \|\pa_t u^{k}(t)\|_{(H^3(\Omega))'} \leq Ca_n(t), \mbox{ for almost all }t\in (0,T).
   \end{equation*}
   It is clear from the estimate \eqref{recall:energy} that $a_n(t)\in L^2(0,T)$ and we have the following time regularity:
   \begin{equation}\label{treg:u}
   \pa_t u^{k} \in L^2(0,T;(H^3(\Omega))').
   \end{equation}
   Next we want to estimate the time regularity of the term $\pa_t Q^k$. Note that we have from the equation \eqref{galerkin:Q} that for all $e_l \in V_k$:
 \begin{equation}\label{recalleq:Qk}
 \int_{\Om}  \pa_t Q^{k}:e_l \ dx = \int_{\Om} \left(\Gamma H^{k}:e_l-(u^{k}\cdot\na)Q^{k}:e_l-(Q^{k}\Sigma(u^{k})-\Sigma(u^{k})Q^{k}):e_l  \right)\,dx
 \end{equation}
 By H\"{o}lder inequality,
\begin{equation}\label{Qt1}
    \begin{split}
       \left| \int_{\Omega} \Gamma H^{k}:e_l\,dx\right|&\leq C\|H^k\|_{L^2(\Omega)}\|e_l\|_{L^2(\Omega)},\\
       \left| \int_{\Omega} (u^{k}\cdot\na)Q^{k}:e_l\,dx\right|&\leq \|u^k\|_{L^6(\Omega)}\|\na Q^k\|_{L^2(\Omega)}\|e_l\|_{L^3(\Omega)}\\
        &\leq C\|u^k\|_{H^1(\Om)}\|Q^k\|_{H^1(\Om)}\|e_l\|_{H^1(\Om)},\\
        \left| \int_{\Omega} (\Sigma^k Q^k-Q^k\Sigma^k):e_l \,dx\right|&\leq \|\na u^k\|_{L^2(\Om)}\|Q^k\|_{L^6(\Om)}\|e_l\|_{L^3(\Om)}\\
        &\leq C\|u^k\|_{H^1(\Om)}\|Q^k\|_{H^1(\Om)}\|e_l\|_{H^1(\Om)}.
    \end{split}
\end{equation}
Thus combination of the equation \eqref{recalleq:Qk} and the estimate \eqref{Qt1} yields: 
   \begin{equation*}
   \left|\int_\Om  \pa_t Q^{k}\cdot e_l\ dx\right| \leq Cb_n(t)\|e_l\|_{H^1_0(\Omega)}\mbox{ for all }e_l\in E_k,
   \end{equation*}
   where $b_n(t)$ is defined by 
   \begin{equation}\label{eq:bn}
   b_n(t):= \|H^{k}(t)\|_{L^2(\Omega)} + \| u^{k}(t)\|_{H^1(\Omega)}  \|Q^{k}(t)\|_{H^1(\Omega)}+ \|u^{k}(t)\|_{H^1(\Omega)}\| Q^{k}(t)\|_{H^1(\Omega)}.
   \end{equation}
   
   Due to the fact that $\int_\Om  \pa_t Q^{k}\cdot e_l=0$, for all $e_l \perp E_k$, we obtain
   \begin{equation*}
   \left|\int_\Om  \pa_t Q^{k}\cdot e_l\ dx\right| \leq C b_n(t)\|e_l\|_{H^1_0(\Omega)}, \quad \forall \ e_l \in H^1_0(\Omega),
   \end{equation*}
   which implies 
     \begin{equation*}
     \|\pa_t Q^{k}(t)\|_{H^{-1}(\Omega)} \leq Cb_n(t), \mbox{ for almost all }t\in (0,T).
   \end{equation*}
   It is clear from the estimate \eqref{recall:energy} that $b_n(t)\in L^2(0,T)$ and we have the following time regularity:
   \begin{equation}\label{treg:Q}
   \pa_t Q^{k} \in L^2(0,T;H^{-1}(\Omega)).
   \end{equation}
 \underline{\textbf{Step 2: Compactness and convergence.}} Now we fix $n>0$, $\delta>0$ and let $k\rightarrow\infty$. We know from the boundedness of $(u^k,Q^k,H^k)$ obtained in \eqref{recall:energy} and from the weak compactness result:
 \begin{align}
 u^k\rightarrow u \mbox{ weakly in }L^2(0,T; H^3(\Omega)) \mbox{ and weakly}^{*} \mbox{ in }L^{\infty}(0,T; H(\Omega)),\label{con:uk}\\
 Q^k \rightarrow Q \mbox{ weakly in }L^2(0,T; H^2(\Omega))  \mbox{ and weakly}^{*} \mbox{ in }L^{\infty}(0,T; H^1(\Omega)),\label{con:Qk}\\
 H^k\rightarrow H \mbox{ weakly in }L^2(0,T; L^2(\Omega)), \label{con:Hk}\\
 \varphi^k\rightarrow \varphi \mbox{ weakly}^{*} \mbox{ in }L^{\infty}(0,T; L^{\infty}(\Omega)),\label{con:phik}
 \end{align}
for some limit function $u\in L^2(0,T;H^3(\Omega))\cap L^\infty(0,T;H(\Omega))$, $Q\in L^2(0,T; H^2(\Omega)) \cap L^\infty(0,T;H^1(\Omega))$, $H\in L^2([0,T]\times \Om)$, $\varphi\in \mbox{Char}(Q)$.
Moreover, we can use Lions-Aubin-Simon Lemma \cite{simon1986compact} with the energy estimate \eqref{recall:energy} and boundedness of $\pa_t u^k$, $\pa_t Q^k$  in \eqref{treg:u}, \eqref{treg:Q} respectively to conclude the strong convergences:
\begin{align}
 u^k\rightarrow u \mbox{ strongly in }L^2(0,T; L^2(\Omega)) \mbox{ and in }C(0,T; H^{-1}(\Omega)),\label{scon:uk}\\
 Q^k \rightarrow Q \mbox{ strongly in }L^2(0,T; H^{2-\varepsilon}(\Omega))\cap L^p((0,T)\times\Omega)   \mbox{ and in }C(0,T; L^2(\Omega)),\label{scon:Qk}
 \end{align}
 for a fixed $\varepsilon > 0$ and for all $p\in (1,6)$.

  \underline{\textbf{Step 3: Transport equation.}}  The weak convergence of $(u^k, \varphi^k)$ to $(u,\varphi)$ in the sense of \eqref{con:uk}, \eqref{con:phik} and the compactness result of transport equation \cite[Theorem II.4]{DiPerna1989} (see also \cite[Lemma 5.1]{MR1870954}), 
provide
\begin{equation}\label{conv:phi}
    \varphi^k \rightarrow \varphi \mbox{ strongly in }C([0,T];L^p(\Omega)),\quad 1\leq p<\infty,
   \end{equation}
   We consider the characteristics  $X^k_{s,t}:\Omega\rightarrow \Omega$ defined as in \eqref{char:ODE} associated to the flow $u^{k}$. We can represent $\varphi^k$ as in \eqref{rf:phi}. We know from the properties of the family of the characteristics $X^k_{s,t}$ \cite[Corollary 5.2]{MR1870954} that as $k\rightarrow\infty$:
   \begin{equation}
 X^k_{s,t}(y)\rightarrow X_{s,t}(y) \mbox{ strongly in }C^{0,\alpha}([0,T]^2;C^1(\Omega)),\ \alpha< 1,
\end{equation}
where $X_{s,t}(y)$ is the unique solution of the problem
 \begin{equation*}
\frac{d}{dt}X_{s,t}(y)=u(t,X_{s,t}(y)),\ \ X_{s,s}(y)=y,\ \ \forall\ (s,t,y)\in(0,T)^2\times \Omega.
\end{equation*}
Moreover, we know from \cite[Theorem 2.4, Remark 2.4, Pages 41-42]{Lions1996} (see also \cite[Theorem 3.1, Corollary 5.3]{MR1870954}) that $\varphi\in \mbox{Char}(Q)\cap C(0,T;L^1(\Omega))$  satisfies the following
\begin{equation}\label{ode:phi}
 \frac{\partial \varphi}{\partial t}+ \operatorname{div}(\varphi u)=0 \quad \mbox{in}\quad \mathcal{D}'((0,T)\times\Omega),\quad
 \varphi(0)=\varphi_0 \quad \text{a.e. in}\quad \Omega.
 \end{equation}

In other words, we have established the weak formulation of transport equation  \eqref{ws:solidn} as a limit equation of \eqref{galerkin:tau} when $k\rightarrow\infty$. 
\begin{remark}
We can represent the solution to equation \eqref{ws:solidn} in the following way:
\begin{equation*}
\varphi(t,y)=\varphi_0(X_{0,t}(y)).
\end{equation*}
\end{remark}
  
  \underline{\textbf{Step 4: Equation for u.}} Let us replace $v_l$ in equation \eqref{galerkin:u} by any $\zeta\in C^1 ([0,T]; V(\Omega)\cap H^3(\Omega))$ of the form $$\zeta(x,t)=\sum\limits_{i=1}^k d^{i}(t)v_i(x)$$ and obtain the following equation:
\begin{multline}\label{uk1}
\int_\Om \bigg( \pa_t u^{k}\cdot \zeta-(u^{k}\otimes u^{k}):\na \zeta+(\mu+n\varphi^{k})(D(u^{k}):D(\zeta))
     +\delta \nabla\Delta u^{k} \cdot\nabla\Delta \zeta\\ +((H^{k}+n\varphi^{k}Q^{k}):\na Q^{k})\cdot \zeta+(Q^{k}H^{k}-H^{k}Q^{k}):\na \zeta\bigg)\,dx=0
\end{multline}
We choose $d^{i}(T)=0$, integrate the equation \eqref{uk1} with respect to time and do integration by parts in the first term to obtain:
\begin{multline}\label{uk2}
\int_0^T \int_\Om \bigg( - u^{k}\cdot \pa_t\zeta-(u^{k}\otimes u^{k}):\na \zeta+(\mu+n\varphi^{k})(D(u^{k}):D(\zeta))
     +\delta \nabla\Delta u^{k} \cdot\nabla\Delta \zeta\\ +((H^{k}+n\varphi^{k}Q^{k}):\na Q^{k})\cdot \zeta+(Q^{k}H^{k}-H^{k}Q^{k}):\na \zeta\bigg)\,dx=\int_{\Om} u^k(0)\cdot \zeta (0)\ dx.
\end{multline}
We use the weak and strong convergences of $u^k$ (see \eqref{con:uk} and \eqref{scon:uk} respectively) to pass the limit $k\rightarrow\infty$ in the first four terms in \eqref{uk2}. Moreover, we use the weak and strong convergences of $Q^k$ (see \eqref{con:Qk} and \eqref{scon:Qk} respectively), weak convergence of $H^k$ (see \eqref{con:Hk}) to pass the limit $k\rightarrow\infty$ in the fifth and sixth terms in \eqref{uk2}. Thus we can conclude 
\begin{multline*}
\int_0^T \int_\Om \bigg( - u\cdot \pa_t\zeta-(u\otimes u):\na \zeta+(\mu+n\varphi)(D(u):D(\zeta))
     +\delta \nabla\Delta u \cdot\nabla\Delta \zeta\\ +((H+n\varphi^{k}Q):\na Q)\cdot \zeta+(QH-HQ):\na \zeta\bigg)\,dx=\int_{\Om} u(0)\cdot \zeta (0)\ dx.
\end{multline*}
We can use a density argument to conclude that the above equation also holds for all $\zeta\in H^1([0,T]\times\Omega)\cap L^2(0,T;V(\Omega)\cap H^3(\Omega))$ such that $\zeta(T)=0$ and we have established the weak formulation \eqref{ws:un} as a limit equation of \eqref{galerkin:u}.
  
   \underline{\textbf{Step 5: Equation for H and Q.}} Let us replace $e_l$ in equation \eqref{galerkin:Q} by any $\psi\in C^1 ([0,T]; H^1(\Omega;M))$ of the form $$\psi(x,t)=\sum\limits_{i=1}^k q^{i}(t)e_i(x),\quad q^i(T)=0,$$ and obtain the following equation:
   \begin{equation}\label{Qk1}
   \int_{\Om} \left( \pa_t Q^{k}:\psi-\Gamma H^{k}:\psi+(u^{k}\cdot\na)Q^{k}:\psi+(Q^{k}\Sigma(u^{k})-\Sigma(u^{k})Q^{k}):\psi  \right)\,dx=0.
   \end{equation}
   Integrate the equation \eqref{Qk1} with respect to time and do integration by parts in the first term to obtain:
    \begin{multline}\label{Qk2}
   \int_0^T\int_{\Om} \left( - Q^{k}:\pa_t\psi-\Gamma H^{k}:\psi+(u^{k}\cdot\na)Q^{k}:\psi+(Q^{k}\Sigma(u^{k})-\Sigma(u^{k})Q^{k}):\psi  \right)\,dx \\=\int_{\Om} Q^k(0)\cdot \psi (0)\ dx.
   \end{multline}
   We can use the weak convergences of $u^k$, $Q^k$, $H^k$, $\varphi^k$ (see \eqref{con:uk}--\eqref{con:phik}) and strong convergence of $Q^k$ (see \eqref{scon:Qk}) to pass the limit $k\rightarrow\infty$ in \eqref{Qk2} and to conclude 
     \begin{equation*}
   \int_0^T\int_{\Om} \left( - Q:\pa_t\psi-\Gamma H:\psi+(u\cdot\na)Q:\psi+(Q\Sigma(u)-\Sigma(u)Q):\psi  \right)\,dx =\int_{\Om} Q(0)\cdot \psi (0)\ dx.
   \end{equation*}
   Utilizing a density argument we conclude that the above equation also holds for all $\psi\in H^1([0,T]\times\Omega;M)$ with $\psi(T)=0$ and we have established the weak formulation \eqref{ws:Qn} as a limit equation of \eqref{galerkin:Q}. Similarly the weak formulation \eqref{ws:Hn} as a limit equation of \eqref{galerkin:H} follows by the weak convergences (see \eqref{con:uk}--\eqref{con:phik}), strong convergence of $Q^k$ (see \eqref{scon:Qk}) and the expression of $H^k$ (see \eqref{def:Hk}).

   Finally, after passing to the limit $k\ri\infty$ in \eqref{galerkin:energy}, we obtain the energy inequality \eqref{energy:approx}. Moreover, we can follow the same deduction as in the Step 3 of the proof of Proposition \ref{exist-galerkin} to conclude that we can choose $T=T_{n}$ (depending on $\operatorname{dist}(S(\varphi_0),\partial\Omega)$, $\delta$, $u_0$, $Q_0$) such that
\begin{equation*}
\operatorname{dist}(S(\varphi(t)),\partial\Omega)>0 \text{ for }t\in[0,T).
\end{equation*}
Thus, the process of passing $k\ri\infty$ is complete.
\end{proof}
 \section{Proof of Theorem \ref{exist:main}}\label{sec4}
In this section we prove the global existence of a weak solution to the system \eqref{Q:fluid}--\eqref{initial}. We will first establish the local existence of a weak solution before the colloid touches the boundary and then extend the solution after the time of collision. For the local existence we pass to the limit in the approximated system \eqref{ws:Qn}--\eqref{ws:solidn} firstly as  $n\rightarrow \infty$ and then as $\delta\rightarrow 0$. 

\subsection{Pass to the limit as $n\rightarrow \infty$}\label{subsec: n limit}

Now we fix $\delta>0$ and let $n\rightarrow\infty$. Let $T=T(\delta,u_0,Q_0,\varphi_0)$ be a fixed constant such that the system \eqref{ws:Qn}--\eqref{ws:solidn} admits a solution on the time interval $[0,T)$ such that $\mbox{dist}(\mcS(\varphi^n(t)),\partial\Omega)>0$ for $t\in[0,T)$. Note that from the proof of Theorem \ref{exist:approx}, $T$ is independent of $n$. Let $(u^n,Q^n,H^n,\varphi^n)$ denote the solution of the system.  We have the boundedness of $u^n$, $Q^n$ and $H^n$ in appropriate norms from the inequality \eqref{energy:approx} along with the existence of solution $(\uu,\Q,H^n,\varphi^n)$ from Theorem \ref{exist:approx}. Using Banach-Alaoglu Theorem, we have up to a subsequence
 \begin{align}
 u^n\rightarrow u \mbox{ weakly in }L^2(0,T; H^3(\Omega)) \mbox{ and weakly}^{*} \mbox{ in }L^{\infty}(0,T; H(\Omega)),\label{con:un}\\
 Q^n\rightarrow Q  \mbox{ weakly}^{*} \mbox{ in }L^{\infty}(0,T; H^1(\Omega)),\label{con:Qn}\\
 H^n\rightarrow H \mbox{ weakly in }L^2(0,T; L^2(\Omega)), \label{con:Hn}\\
 \varphi^n\rightarrow \varphi \mbox{ weakly}^{*} \mbox{ in }L^{\infty}(0,T; L^{\infty}(\Omega)),\label{con:phin}
 \end{align}
for some limit functions $u\in L^2(0,T;H^3(\Omega))\cap L^\infty(0,T;H(\Omega))$, $Q\in L^\infty(0,T;H^1(\Omega))$, $H\in L^2((0,T)\times \Om)$, $\varphi\in \mbox{Char}(Q)$.

\underline{\textbf{Step 1: Passing to the limit in the transport equation.}} We want to pass the limit $n\rightarrow\infty$ in the equation \eqref{ws:solidn} and to do so the first aim is to analyze the limit of $\varphi^n$. We consider again the mapping $X_{s,t}^n:\Omega\rightarrow\Omega$ that satisfies 
\begin{equation*}
    \frac{d}{dt}X_{s,t}^n(y)=u^n(t,X^n_{s,t}(y)),\ \ X^n_{s,s}(y)=y,\ \ \forall (s,t,y)\in(0,T)^2\times \Omega.
\end{equation*}
Since $u^n\in L^2(0,T;H^3(\Omega))\subset L^2(0,T;C^{1,\frac12}(\Omega))$, the mapping $X^n_{s,t}$ is well-defined, one-to-one and invertible. Then by \eqref{ws:solidn} we have 
\begin{equation*}
 \varphi^n(t,y)=\varphi_0(X^n_{t,0}(y)),\quad \mathcal{S}(\varphi^n(t))=X_{0,t}^n(\mathcal{S}(\varphi_0)).    
\end{equation*}


We can follow the same argument as in Step 3 of the proof of \cref{exist:approx} to conclude that we can pass the limit $n\rightarrow\infty$ in the equation \eqref{ws:solidn} and obtain \eqref{ws:solid}.

 Moreover, we can utilize the second term in the left-hand side of energy inequality \eqref{energy:approx} and the convergence results \eqref{con:un}, \eqref{conv:phi} corresponding to $\uu$, $\varphi^n$ respectively to conclude that
\begin{equation*}
\varphi Du =0 \quad \text{a.e. in }\ Q.
\end{equation*}
Thus the velocity field $u\in L^2(0,T;K(\varphi))$. 
 
The following result is proved separately in \cite[Lemma 3.3]{HOST} and \cite[Lemma 6.1]{MR1870954}, where the arguments can adapt to our problem easily.  

\begin{lemma}\label{lemma 6.1 2d paper}
(\cite[Lemma 6.1]{MR1870954}) For any $\sigma>0$ there exists $n_0$ (depending on $\sigma,\,\delta$) such that 
\begin{equation*}
    \mathcal{S}(\varphi^n(t))\subset \mcS_{\sigma}(\varphi(t)),\quad\mathcal{S}(\varphi(t))\subset \mcS_{\sigma}(\varphi^n(t)) 
\end{equation*}
for all $n\geq n_0$, for all $t\in [0,T)$.
\end{lemma}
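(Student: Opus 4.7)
The plan is to lift the convergence of the velocities $u^n \to u$ to a uniform (in $(t,y)$) convergence of the associated characteristic flows, and then exploit the representation $\mathcal{S}(\varphi^n(t)) = X^n_{0,t}(\mathcal{S}(\varphi_0))$ and $\mathcal{S}(\varphi(t)) = X_{0,t}(\mathcal{S}(\varphi_0))$ to produce both set inclusions. The crucial resource is that $\delta>0$ is held fixed, so the energy estimate \eqref{energy:approx} delivers a uniform-in-$n$ bound
\begin{equation*}
\|u^n\|_{L^2(0,T; H^3(\Omega))} \leq C(\delta, u_0, Q_0),
\end{equation*}
which via Sobolev embedding gives a uniform bound in $L^2(0,T; C^{1,1/2}(\overline{\Omega}))$. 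In particular the flows $X^n_{0,t}$ are well-defined and Lipschitz on $\overline{\Omega}$.

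First I would pin down the required convergence of the flows. Testing \eqref{ws:un} against smooth divergence-free fields and using the bounds in \eqref{con:un}--\eqref{con:phin} together with the $\delta$-regularization yields a uniform bound for $\partial_t u^n$ in some $L^2(0,T;(H^s(\Omega))')$. Aubin--Lions then gives strong convergence $u^n \to u$ in $L^2(0,T; H^{3-\varepsilon}(\Omega)) \hookrightarrow L^2(0,T; C^1(\overline{\Omega}))$ for small $\varepsilon>0$. This is precisely the input that makes the argument in Step 3 of the proof of \cref{exist:approx} go through verbatim at this new approximation level, yielding $X^n_{s,t} \to X_{s,t}$ strongly in $C^{0,\alpha}([0,T]^2; C^1(\overline{\Omega}))$ for any $\alpha<1$.

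With this convergence in hand, the lemma is essentially a triangle-inequality exercise. Given $\sigma>0$, uniform convergence of $X^n_{0,t}$ to $X_{0,t}$ on $[0,T]\times\overline{\Omega}$ produces $n_0=n_0(\sigma,\delta)$ such that $\sup_{(t,y)\in[0,T]\times\overline{\Omega}} |X^n_{0,t}(y)-X_{0,t}(y)| < \sigma$ for all $n \geq n_0$. For such $n$ and any $t\in[0,T)$, an arbitrary $x\in\mathcal{S}(\varphi^n(t))$ may be written $x = X^n_{0,t}(y)$ with $y\in\mathcal{S}(\varphi_0)$; then $X_{0,t}(y) \in \mathcal{S}(\varphi(t))$ and $|x - X_{0,t}(y)| < \sigma$, so $x \in \mathcal{S}_\sigma(\varphi(t))$, which gives the first inclusion. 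The reverse inclusion follows by interchanging the roles of $X^n_{0,t}$ and $X_{0,t}$.

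The main obstacle is securing the uniform-in-$t$ strong convergence of $X^n_{0,t}$ on all of $\overline{\Omega}$, since the weak convergences in \eqref{con:un}--\eqref{con:phin} by themselves are not enough to control $|X^n_{0,t}(y) - X_{0,t}(y)|$; one really needs the $\delta$-regularization to propagate into an $L^1(0,T; W^{1,\infty})$ bound on $u^n$ and a strong $L^1(0,T; L^\infty)$ convergence $u^n \to u$, followed by a Gr\"onwall closure on the difference of characteristics. Once that is in place, the geometric inclusions are immediate.
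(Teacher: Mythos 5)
Your final geometric step (representing $\mathcal{S}(\varphi^n(t))=X^n_{0,t}(\mcS(\varphi_0))$, $\mathcal{S}(\varphi(t))=X_{0,t}(\mcS(\varphi_0))$ and using uniform closeness of the flows plus the triangle inequality) is fine, and it is indeed the mechanism behind the cited results; the paper itself gives no proof but simply invokes \cite[Lemma 3.3]{HOST} and \cite[Lemma 6.1]{MR1870954}. The genuine gap is in how you propose to obtain the uniform convergence of the characteristics. You claim that testing \eqref{ws:un} with arbitrary smooth divergence-free fields gives a bound for $\partial_t u^n$ in $L^2(0,T;(H^s(\Omega))')$ that is \emph{uniform in $n$}, and then you run Aubin--Lions. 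This fails: the energy estimate \eqref{energy:approx} only controls $\int_0^T\int_\Omega n\varphi^n|D(u^n)|^2$ and $\sup_t n\int_\Omega \varphi^n|Q^n|^2$, so the penalization terms $n\varphi^n D(u^n)$ and $n\varphi^n Q^n:\nabla Q^n$ appearing in \eqref{ws:un} are only bounded like $\sqrt{n}$ in $L^2$, not uniformly. Uniform control of $\partial_t u^n$ against a test field $\zeta$ is available only when $\varphi^n D(\zeta)=0$, i.e.\ when $\zeta$ is rigid on a neighbourhood of the solid --- and choosing such $\zeta$ requires knowing where $\mcS(\varphi^n(t))$ sits relative to $\mcS(\varphi(t))$, which is exactly the content of \cref{lemma 6.1 2d paper}. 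This is also why in the paper the strong convergence of $u^n$ (\cref{strong con lemma:u}) is proved \emph{after} this lemma and uses it through the choice $\zeta\in K(\mcS_j)$; your argument runs the implications in the opposite order and is therefore circular as written.

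The conclusion can nevertheless be reached without any strong convergence of $u^n$, which is the point of the cited lemmas: since $\delta>0$ is fixed, \eqref{energy:approx} gives $\|u^n\|_{L^2(0,T;H^3(\Omega))}\le C(\delta)$ uniformly in $n$, hence the flows $X^n_{0,t}$ are uniformly Lipschitz in $y$ and uniformly H\"older-$\tfrac12$ in $t$. Writing $X^n_{0,t}(y)-X_{0,t}(y)=\int_0^t\bigl[u^n(s,X^n_{0,s}(y))-u^n(s,X_{0,s}(y))\bigr]ds+\int_0^t\bigl[u^n-u\bigr](s,X_{0,s}(y))\,ds$, the first term is handled by Gr\"onwall using the uniform $L^1(0,T;W^{1,\infty})$ bound, while the second tends to $0$ pointwise because evaluation along the fixed curve $s\mapsto X_{0,s}(y)$ is a bounded linear functional on $L^2(0,T;H^3(\Omega))$ and $u^n\rightharpoonup u$ weakly there; equicontinuity in $(t,y)$ upgrades this to uniform convergence (Arzel\`a--Ascoli). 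With that replacement --- or simply by invoking \cite[Lemma 3.3]{HOST}, whose setting matches the present $\delta$-regularized one --- your triangle-inequality conclusion of both inclusions is correct.
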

 
 \underline{\textbf{Step 2: Strong convergences.}} We need to establish some strong convergence results regarding the Landau-de Gennes tensor $Q$ and the velocity $u$ to proceed further in the passing to the limit for the equations \eqref{ws:Qn}--\eqref{ws:un}. First of all we point out that utilizing the energy estimate \eqref{energy:approx}, weak convergence result \eqref{con:Qn}, and Lemma \ref{lemma 6.1 2d paper} one can immediately verify that 
 \begin{equation}\label{H=Q=0 in solid}
 Q=0\quad \text{ a.e. in }\mcS(\varphi(t)),    
 \end{equation}
 which implies 
 \begin{equation*}
     Q(t,\cdot)\in H_0^1(\Omega\setminus \mcS(\varphi(t))) \text{ for a.e. }t\in[0,T).
 \end{equation*}
 
 We begin with the proof of strong convergence of $\{Q^n\}$ in $L^2(0,T; H^1(\mathcal{F}(t)))$.
 \begin{lemma}\label{strong con lemma:Q}
 For any $\sigma>0$, the sequence $\{Q^n\}$ in \eqref{con:Qn} converges strongly to $Q$ in $L^2(0,T; H^1(\widetilde{\mathcal{F}}_\sigma(t)))$, where $\widetilde{\mathcal{F}}_\sigma(t)=\Omega \setminus \mathcal{S}_\sigma(\varphi(t))$.
\end{lemma}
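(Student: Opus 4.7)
The strategy is to combine interior elliptic regularity with an Aubin--Lions compactness argument, localized to a region on which the penalization $n\varphi^n$ is inactive. By \cref{lemma 6.1 2d paper}, for every $\sigma>0$ there exists $n_0$ such that $\mcS(\varphi^n(t))\subset \mcS_{\sigma/2}(\varphi(t))$ for all $n\geq n_0$ and $t\in[0,T)$; equivalently, $\varphi^n\equiv 0$ on the slightly enlarged fluid region $\widetilde{\mathcal{F}}_{\sigma/2}(t)$. On this set, equation \eqref{ws:Hn} collapses to the elliptic identity
\begin{equation*}
-\Delta Q^n + \f{\pa f_b(Q^n)}{\pa Q}=H^n.
\end{equation*}

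The first step is to upgrade the spatial regularity of $Q^n$ away from the solid. The uniform bound $Q^n\in L^\infty(0,T;H^1(\Omega))$ from \eqref{energy:approx} and the Sobolev embedding $H^1\hookrightarrow L^6$ imply that the cubic nonlinearity $\pa_Q f_b(Q^n)$ stays bounded in $L^\infty(0,T;L^2(\Omega))$. Combined with the $L^2(0,T;L^2(\Omega))$-bound on $H^n$ and a standard interior elliptic estimate applied on the inclusion $\widetilde{\mathcal{F}}_{\sigma}(t)\subset\subset \widetilde{\mathcal{F}}_{\sigma/2}(t)$ (with boundary regularity used in a neighborhood of $\pa\Om$, where $Q^n|_{\pa\Om}=0$), one obtains
\begin{equation*}
\|Q^n\|_{L^2(0,T;H^2(\widetilde{\mathcal{F}}_{\sigma}(t)))}\leq C(\sigma).
\end{equation*}

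The second step is a uniform bound on the time derivative. Arguing exactly as in the derivation of \eqref{eq:bn} and \eqref{treg:Q}, one tests \eqref{ws:Qn} against arbitrary $\psi\in H^1_0(\Om;M)$ and exploits the energy bounds on $u^n$, $Q^n$, $H^n$ to get
\begin{equation*}
\|\pa_t Q^n\|_{L^2(0,T;H^{-1}(\Omega))}\leq C.
\end{equation*}

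The third step is the cutoff-based compactification. Choose $\chi(t,x)\in C^\infty([0,T]\times\overline{\Om})$ satisfying $\chi\equiv 1$ on $\widetilde{\mathcal{F}}_\sigma(t)$, $\chi\equiv 0$ on $\mcS_{\sigma/2}(\varphi(t))$, and $|\chi|+|\na\chi|+|\pa_t\chi|\leq C(\sigma)$ uniformly in $t$. Such a cutoff can be built from a smooth function of $\mathrm{dist}(\cdot,\mcS(\varphi(t)))$; its Lipschitz-in-time regularity follows from the transport structure \eqref{ws:solid} combined with $u\in L^2(0,T;H^3(\Omega))\hookrightarrow L^2(0,T;W^{1,\infty}(\Omega))$, which guarantees that the characteristics defining $\mcS(\varphi(t))$ are uniformly Lipschitz in time. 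Setting $w^n:=\chi Q^n$, the two previous estimates give $w^n$ bounded uniformly in $L^2(0,T;H^2(\Omega))$ and $\pa_t w^n=\chi\pa_tQ^n+(\pa_t\chi)Q^n$ bounded in $L^2(0,T;H^{-1}(\Omega))$. The Aubin--Lions--Simon lemma then yields strong convergence $w^n\to \chi Q$ in $L^2(0,T;H^1(\Omega))$, and restricting to $\{\chi\equiv 1\}\supset \widetilde{\mathcal{F}}_\sigma(t)$ delivers the claim.

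The principal obstacle is the moving interface: the cutoff $\chi$ must be constructed with $\pa_t\chi$ uniformly bounded in $n$, even though $\mcS(\varphi(t))$ is only known to move with the regularity inherited from $u$. This is precisely where the extra regularizing term $\delta\na\Delta u^n\cdot\na\Delta \zeta$ in \eqref{ws:un} becomes indispensable---it promotes $u$ to $L^2(0,T;W^{1,\infty}(\Omega))$ and thereby ensures Lipschitz control of $\mcS(\varphi(t))$ in time. Accordingly, the lemma is stated and proved at the fixed-$\delta$ level, prior to the delicate passage $\delta\to 0$.
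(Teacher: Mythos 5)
Your overall strategy (use Lemma \ref{lemma 6.1 2d paper} to deactivate the penalization near the solid, extract a uniform local $H^2$ bound from the elliptic identity contained in \eqref{ws:Hn}, bound $\pa_t Q^n$ in $L^2(0,T;H^{-1}(\Omega))$ from \eqref{ws:Qn}, and conclude by Aubin--Lions) coincides with the paper's; the only real difference is how the moving domain is handled. The paper does not use a time-dependent cutoff: it exploits the H\"older continuity in time of the flow map $X_{0,t}$ to split $[0,T]$ into finitely many short intervals $I_j$, on each of which a \emph{time-independent} set $\mcS_j$ is sandwiched between $\mcS_{\sigma/2}(\varphi(t))$ and $\mcS_{\sigma}(\varphi(t))$, and then applies the standard Aubin--Lions lemma on the fixed cylinders $I_j\times(\Omega\setminus\mcS_j)$. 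That device needs only uniform continuity in time of the solid's position, which is exactly what the available regularity provides.

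The genuine gap in your version is the justification of the cutoff. You need $\pa_t\chi\in L^\infty$, i.e.\ Lipschitz-in-time motion of $\mcS(\varphi(t))$, and you derive it from $u\in L^2(0,T;H^3(\Omega))\hookrightarrow L^2(0,T;W^{1,\infty}(\Omega))$. But $\|u(t)\|_{L^\infty}\in L^2(0,T)$ only gives $|X_{0,t_1}(y)-X_{0,t_2}(y)|\leq \|u\|_{L^2(0,T;L^\infty)}|t_1-t_2|^{1/2}$, i.e.\ $C^{0,1/2}$ regularity in time, not Lipschitz; with that regularity $\pa_t\chi$ need not exist as a bounded function, and your bound on $\pa_t w^n$ in $L^2(0,T;H^{-1}(\Omega))$ collapses. (Your closing claim that the $\delta$-regularization is ``indispensable'' for Lipschitz control of the interface is therefore a misattribution.) The step can be repaired in two ways: either invoke the fact, already established before this lemma, that $u\in L^2(0,T;K(\varphi))$, so that on $\mcS(\varphi(t))$ the velocity is rigid, $u=\ell(t)+\omega(t)\times(x-h(t))$ with $\ell,\omega\in L^\infty(0,T)$ by the $L^\infty(0,T;L^2)$ energy bound --- this is exactly the argument the paper uses later in the $\delta\to 0$ step, and it does yield Lipschitz motion; or abandon Lipschitz and use Simon's compactness criterion with time translates (estimating $\|w^n(\cdot+h)-w^n\|_{L^2(0,T-h;H^{-1})}\lesssim h^{1/2}$), or simply the paper's time-slicing, for which H\"older-$\tfrac12$ continuity suffices. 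A smaller bookkeeping point: since $\operatorname{supp}\chi$ reaches $\mcS_{\sigma/2}(\varphi(t))$, the uniform $H^2$ bound must be established on $\widetilde{\mathcal{F}}_{\sigma/2}(t)$ (using the elliptic identity on, say, $\widetilde{\mathcal{F}}_{\sigma/4}(t)$), not only on $\widetilde{\mathcal{F}}_{\sigma}(t)$, and $\chi$ should equal $1$ on a slightly larger set than $\widetilde{\mathcal{F}}_{\sigma}(t)$ so that $\na\chi$ vanishes there.
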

\begin{proof}
 By Lemma \ref{lemma 6.1 2d paper}, there exists $n_0>0$ such that 
\begin{equation}\label{phi^n belong to nbh of phi}
    \mcS(\varphi^n(t))\subset \mcS_{\frac{\sigma}{2}}(\varphi(t)),\;\;\forall t\in[0,T],\,\forall\ n > n_0.
\end{equation}
Moreover, since the map $X_{0,t}(x)$ is H\"{o}lder in $t$, there is a positive integer $N_\sigma$ such that we can divide the interval $[0,T]$ into $N_\sigma$ sub-intervals and find $N_\sigma$ corresponding subsets of $\Omega$, denoted by $\mcS_1,\,\mcS_2,\,...,\,\mcS_{N_\sigma}$ (here $\mcS_j$ is independent of $t$), that satisfy
\begin{align}
    \nonumber&\qquad [0,T]=\bigcup\limits_{j=1}^{N_\sigma} I_j,\quad I_j:=[(j-1)\tau,j\tau],\quad \tau=\frac{T}{N_\sigma},\\
    & \label{nbh of phi belong to sj}  \mcS_{\frac{\sigma}{2}}(\varphi(t))\subset \mcS_j\subset\mcS_{\sigma}(\varphi(t)), \quad \text{for any }j=1,...,N_\sigma\text{ and }t\in I_j. 
\end{align}
Due to the regularity assumption on $\partial\mcS(\varphi(t))$, we have 
\begin{equation}
    \mathcal{L}(\mcS_j \setminus \mcS(\varphi(t)))\leq\mathcal{L}(\mcS_\sigma(\varphi(t)) \setminus \mcS(\varphi(t))) \leq C\sigma, \quad \forall j=1,...,N_\sigma,\;t\in I_j,
\end{equation}
where $C$ is a constant independent of $t$ and $\sigma$, but only depends on $|\partial \mcS(\varphi(0))|$. 

For each $j$, we examine the solution $(u^n,Q^n,H^n,\varphi^n)$ of \eqref{ws:Qn}--\eqref{ws:solidn} on the cylinder $I_j\times (\Omega\setminus\mathcal{S}_j)$. We take the test function $\xi$ in \eqref{ws:Hn} such that $\mbox{supp}(\xi)\subseteq I_j\times (\Omega\setminus\mathcal{S}_j)$. When $n>n_0$, since $\mathcal{S}(\varphi^n(t))\subset S_j$ for $t\in I_j$, $\xi\varphi^n=0$ in $I_j\times (\Omega\setminus\mathcal{S}_j)$. Further we have
\begin{equation} \label{Delta Qn represented by Hn in Ij}
    \Delta Q^n=\frac{\partial f_b}{\pa Q}(Q^n)+H^n \quad \text{ in }\mathcal{D}'(I_j\times (\Omega\setminus\mathcal{S}_j)).
\end{equation}
Utilizing the energy estimate \eqref{energy:approx} we have 
\begin{equation}\label{uniform bound for Qn and Hn in Ij}
    \|Q^n\|_{L^\infty(0,T;H^1(\Omega))}+\|H^n\|_{L^2([0,T]\times \Omega)}\leq C.
\end{equation}
Since $|\frac{\partial f_b}{\pa Q}(Q^n)|\leq C(|Q^n|^3+1)$, we infer from \eqref{Delta Qn represented by Hn in Ij} and \eqref{uniform bound for Qn and Hn in Ij} that 
\begin{equation}\label{L2H2Q}
    \|Q^n\|_{L^2(I_j, H^2(\Omega\setminus \mathcal{S}_j))}\leq C,
\end{equation}
where $C$ is a constant only depending on the initial energy $E(u_0, Q_0)$.

In \eqref{ws:Qn} we substitute a test function $\psi\in C_c^1(I_j\times (\Omega\setminus \mcS_j))$. Then we have $\varphi^n(t)\cdot \psi\equiv 0$ for $t\in I_j$, $n>n_0$ and $\pa_tQ^n$ satisfies 

\begin{equation*}
    \int_{(j-1)\tau}^{j\tau}\int_{\Omega\setminus \mcS_j}\pa_t Q^n:\psi \,dxdt=\int_{(j-1)\tau}^{j\tau}\int_{\Omega\setminus \mcS_j} \left(\Gamma H^n-(u^n\cdot\na )Q^n+(\Sigma^nQ^n-Q^n\Sigma^n)\right):\psi\,dxdt. 
\end{equation*}

Similarly as in \eqref{Qt1} we have,
\begin{equation*}
    \begin{split}
       \left| \int_{\Omega\setminus \mcS_j} H^n:\psi\,dx\right|&\leq \|H^n\|_{L^2(\Omega)}\|\psi\|_{L^2(\Omega\setminus\mcS_j)},\\
       \left| \int_{\Omega\setminus \mcS_j} (u^n\cdot\na) Q^n :\psi \,dx\right|&\leq \|u^n\|_{L^6(\Omega)}\|\na Q^n\|_{L^2(\Omega)}\|\psi\|_{L^3(\Omega\setminus\mcS_j)}\\
        &\leq C\|u^n\|_{H^1(\Om)}\|Q^n\|_{H^1(\Om)}\|\psi\|_{H^1(\Om\setminus\mcS_j)},\\
        \left| \int_{\Omega\setminus \mcS_j} (\Sigma^n Q^n-Q^n\Sigma^n):\psi \,dx\right|&\leq \|u\|_{H^1(\Om)}\|Q^n\|_{L^6(\Om)}\|\psi\|_{L^3(\Om\setminus\mcS_j)}\\
        &\leq C\|u^n\|_{H^1(\Om)}\|Q^n\|_{H^1(\Om)}\|\psi\|_{H^1(\Om\setminus\mcS_j)}.
    \end{split}
\end{equation*}
By \eqref{energy:approx} we know that $\|u^n\|_{L^2(I_j;H^1(\Omega))}+\|Q^n\|_{L^\infty(I_j;H^1(\Omega))}\leq C$, which together with estimates above imply that 
\begin{equation}\label{L2H-1Q}
    \|\pa_t Q^n\|_{L^2(I_j; H^{-1}(\Om\setminus\mcS_j))}\leq C,
\end{equation}
for some constant $C$ depending on $E(u_0, Q_0)$. The estimates \eqref{L2H2Q}, \eqref{L2H-1Q}, together with \eqref{con:Qn} and Lions-Aubin-Simon Lemma (cf. \cite{simon1986compact}) implies that 
\begin{equation*}
    Q^n\rightarrow Q \text{ strongly in }L^2(I_j;H^1(\Omega\setminus \mcS_j)),\quad \forall j=1,..., N_{\sigma(\varepsilon)}.
\end{equation*}

By \eqref{nbh of phi belong to sj}, $(0,T)\times \widetilde{\mathcal{F}}_\sigma(t)\subset \bigcup\limits_{j=1}^{N_\sigma} I_j\times (\Om\setminus\mcS_j)$, thus
\begin{equation*}
    Q^n\rightarrow Q \text{ strongly in }L^2(0,T;H^1(\widetilde{\mathcal{F}}_\sigma(t)).
\end{equation*}
The proof is complete.

\end{proof}
\begin{remark}
A byproduct of the proof of Lemma \ref{strong con lemma:Q} is that $Q^n$ is uniformly bounded in $L^2(0,T; H^2(\widetilde{\mathcal{F}}_\sigma(t)))$ for $n\geq n_0$ and $Q\in L^2(0,T;H^2(\mcF(t)))$ with the norm estimate
\begin{equation*}
    \|Q\|_{L^2(0,T;H^2(\mcF(t)))}\leq C(u_0,Q_0),
\end{equation*}
where the constant is independent of $n,\,\delta$. 
\end{remark}

\begin{lemma}\label{strong con lemma:u}
The sequence $\{u^n\}$ in \eqref{con:un} converges strongly to $u$ in $L^2(0,T; L^2(\Omega))$.
\end{lemma}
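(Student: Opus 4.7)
My plan is to apply the Aubin--Lions--Simon compactness lemma via a localization strategy analogous to the proof of \cref{strong con lemma:Q}. The energy estimate \eqref{energy:approx} together with the $\delta$-regularization provides, uniformly in $n$, the bounds $u^n\in L^2(0,T;H^3(\Omega))\cap L^\infty(0,T;L^2(\Omega))$; combined with the compact embedding $H^3\hookrightarrow\hookrightarrow L^2$ this supplies the space-regularity ingredient. What remains is a uniform time-derivative bound, which is delicate because two terms in \eqref{ws:un}, namely $\int(\mu+n\varphi^n)D(u^n):D(\zeta)$ and $\int(\zeta\cdot\nabla Q^n):(H^n+n\varphi^n Q^n)$, carry the penalization parameter and are not controllable when $\zeta$ ranges over the full space $V(\Omega)\cap H^3(\Omega)$.

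To bypass this, I would fix $\sigma>0$ and reuse the partition $[0,T]=\bigcup_{j=1}^{N_\sigma}I_j$ and the sets $\mathcal{S}_j$ from the proof of \cref{strong con lemma:Q} satisfying $\mathcal{S}_{\sigma/2}(\varphi(t))\subset \mathcal{S}_j\subset \mathcal{S}_{\sigma}(\varphi(t))$ on $I_j$, and invoke \cref{lemma 6.1 2d paper} to select $n_0$ so that $\mathcal{S}(\varphi^n(t))\subset \mathcal{S}_j$ for $n\geq n_0$, $t\in I_j$. Testing \eqref{ws:un} against $\zeta\in C_c^1(I_j;V(\Omega)\cap H^3(\Omega))$ with spatial support in $\Omega\setminus \mathcal{S}_j$ makes $\varphi^n\zeta\equiv 0$ and $\varphi^n D(\zeta)\equiv 0$ on $I_j$, so both offending terms drop out \emph{simultaneously}. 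The remaining terms are handled as in \eqref{ut1}--\eqref{ut4} using H\"older's inequality, the embedding $H^3\hookrightarrow W^{1,\infty}$, and the uniform bounds on $u^n,\nabla Q^n,H^n$, producing
\begin{equation*}
\|\partial_t u^n\|_{L^2(I_j;(V_j^\sigma)')}\leq C,\qquad V_j^\sigma:=\{\zeta\in V(\Omega)\cap H^3(\Omega)\,:\,\zeta\equiv 0\text{ on }\mathcal{S}_j\},
\end{equation*}
with $C$ depending on $\delta$ and $E(u_0,Q_0)$ but not on $n$. Aubin--Lions--Simon then gives $u^n\to u$ strongly in $L^2(I_j;L^2(\Omega\setminus \mathcal{S}_j))$ and, summing over $j$, in $L^2(0,T;L^2(\widetilde{\mathcal{F}}_\sigma(t)))$.

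For the remaining $\sigma$-neighbourhood of the solid, the second term in \eqref{energy:approx} forces $\sqrt{n\varphi^n}\,D(u^n)$ to be uniformly bounded in $L^2$, so $D(u^n)\to 0$ strongly in $L^2(\mathcal{S}(\varphi^n(t)))$. By Korn's inequality $u^n|_{\mathcal{S}(\varphi^n(t))}$ is asymptotically a rigid motion determined by finite-dimensional parameters $(\ell^n,\omega^n)$ bounded in $L^2(0,T)$ by the energy; convergence of these parameters along a subsequence combined with the strong convergence $\varphi^n\to \varphi$ in $C([0,T];L^p(\Omega))$ delivers strong $L^2$ convergence on $\mathcal{S}_\sigma(\varphi(t))$. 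Letting $\sigma\to 0$ and using $|\mathcal{S}_\sigma(\varphi(t))\setminus\mathcal{S}(\varphi(t))|=O(\sigma)$ yields the claim on all of $\Omega$. The main obstacle is the coexistence of two penalization mechanisms (one enforcing rigidity of $u^n$, the other enforcing $Q^n=0$ on the solid); the key observation is that requiring test functions to vanish on $\mathcal{S}_j$---rather than merely having $D(\zeta)=0$ there---kills \emph{both} terms at once, and this is the nematic-specific ingredient absent from the Newtonian fluid--solid literature.
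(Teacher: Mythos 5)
Your localization on $I_j\times(\Omega\setminus\mcS_j)$ with test functions \emph{vanishing} on $\mcS_j$ does give a uniform bound on $\pa_t u^n$ in a dual space over the fluid region and hence, via Aubin--Lions, strong convergence of $u^n$ in $L^2(I_j;L^2(\Omega\setminus\mcS_j))$; that part is sound and is a legitimate variant. But the lemma claims strong convergence in $L^2(0,T;L^2(\Omega))$, and your treatment of the solid neighbourhood has a genuine gap. From \eqref{energy:approx} you correctly get $\|\sqrt{n\varphi^n}D(u^n)\|_{L^2}\le C$, so $D(u^n)\to 0$ on $\mcS(\varphi^n(t))$ and, by Korn, $u^n$ is close there to a rigid field with parameters $(\ell^n,\omega^n)$ bounded in $L^\infty(0,T)$. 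Boundedness, however, only yields \emph{weak(-$*$)} convergence of a subsequence of $(\ell^n,\omega^n)$, and weak convergence of the rigid parameters does not give strong $L^2((0,T)\times\mcS_\sigma(\varphi(t)))$ convergence of $u^n$ (they could oscillate in time). To upgrade to strong convergence you need time-equicontinuity of $(\ell^n,\omega^n)$, i.e.\ a bound on the time derivative of $u^n$ paired against test functions that are \emph{nonzero} (rigid) on a neighbourhood of the solid --- exactly the information your choice of test functions supported in $\Omega\setminus\mcS_j$ throws away. Nothing in your argument recovers it: the fluid-region compactness lives on $\Omega\setminus\mcS_j$, which does not overlap the set where $u^n$ is (approximately) rigid, and $\varphi^n\to\varphi$ in $C([0,T];L^p)$ carries no information about the velocity of the solid.

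This is precisely why the paper keeps test functions $\zeta$ with $D(\zeta)=0$ on $\mcS_j$ (elements of $K(\mcS_j)$) rather than $\zeta=0$ there: rigid test functions still kill the term $n\varphi^n D(u^n):D(\zeta)$, retain the pairing with the rigid part of the motion so that the San Mart\'{\i}n--Starovoitov--Tucsnak compactness argument (\cite[Section 7]{MR1870954}) applies on all of $\Omega$, and the genuinely nematic difficulty --- the surviving penalized term $-(\zeta\cdot\na)Q^n:(H^n+n\varphi^nQ^n)$, which does \emph{not} vanish for rigid $\zeta$ --- is handled by substituting $\xi=(\zeta\cdot\na)Q^n$ into \eqref{ws:Hn}, yielding the identity \eqref{div tau term in Q equation} that rewrites it as $\int(\na Q^n\odot\na Q^n):\na\zeta$, uniformly bounded by $\|Q^n\|^2_{L^\infty H^1}\|\zeta\|_{L^2H^3}$. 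So the nematic-specific ingredient is this $H$-equation identity, not the vanishing of test functions; your route avoids the identity but at the cost of losing exactly the solid-region time compactness. To repair your proof you would either have to prove strong convergence (or a.e.-in-$t$ convergence plus equicontinuity) of $(\ell^n,\omega^n)$ --- which leads you back to testing \eqref{ws:un} with rigid test functions and hence to the identity \eqref{div tau term in Q equation} --- or follow the paper's argument directly. (A further, more minor point to address on your route would be the uniformity in $n$ of the Korn constant on the transported, non-rigidly deformed domains $\mcS(\varphi^n(t))$.)
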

\begin{proof}
The strong convergence of $\{\uu\}$ in $L^2(0,T;L^2(\Omega))$ follows largely the proofs of \cite[Theorem 3.2]{MR1870954} and \cite[Proposition 3.1]{F3}. The main difference is that in the equation \eqref{ws:un}, there are several extra terms involving $Q^n, H^n$ which need to be estimated carefully when deriving the norm bound for $\pa_t u^n$ from \eqref{ws:un}. Here we work on $I_j\times \Om$ and let us recall the equation \eqref{ws:un}, where we take $\zeta$ to be compactly supported in time in the interval $I_j$: \begin{multline}\label{3.3} 
-\int\limits_{I_j}\int\limits_{\Omega} \uu\cdot \pa_t\zeta\ dx\ dt = \int\limits_{I_j}\int\limits_{\Omega} \left[(\uu\otimes\uu):\na\zeta-(\mu+n\varphi^n) D(\uu):D(\zeta)-\delta \nabla \Delta u^n\cdot \nabla\Delta \zeta\right]\ dx \ dt  \\
+\int\limits_{I_j}\int\limits_{\Omega} \left[( -(\zeta\cdot\na )Q^n:(H^n+n\varphi^nQ^n) -(\Q H^n-H^n\Q):\na \zeta \right]\ dx \ dt.
\end{multline}
 We need to estimate the terms in the right-hand side of \eqref{3.3} and more precisely we focus on the terms $-(\zeta\cdot\na )Q^n:(H^n+n\varphi^nQ^n)$ and $-(\Q H^n-H^n\Q):\na \zeta$. We substitute $\zeta \in C^1_c(I_j\times \Om)\cap L^2(I_j;K(\mcS_j)\cap H^3(\Om))$ into \eqref{3.3} to obtain
\begin{equation*}
        \varphi^nD(\zeta)=0,\quad \forall\ n>n_0.
\end{equation*}
Let us examine equation \eqref{ws:Hn}. Take test function $\xi=(\zeta\cdot \na)Q^n$ and use $\operatorname{div}\zeta=0$ in $\Omega$, $\zeta\in K(\mcS_j)$ for a.e. $t\in I_j$ to obtain
\begin{equation}\label{div tau term in Q equation}
\begin{split}
    \int_{I_j}\int_{\Om} -(\zeta\cdot \na)Q^n:(H^n+&n\varphi^nQ^n)\,dx\ dt          =\int_{I_j}\int_{\Om} \na Q^n:\na ((\zeta\cdot \na)Q^n)+\f{\pa f_b(Q^n)}{\pa Q}:(\zeta\cdot\na) Q^n\,dx\ dt\\
   &=\int_{I_j}\int_\Om (\zeta\cdot\na)(\f12|\na Q^n|^2+f_b(Q_n))+(\na Q^n\odot \na Q^n):\na\zeta\,dx\ dt\\
   &=\int_{I_j}\int_\Om (\na Q^n\odot \na Q^n):\na\zeta\,dx\ dt.
\end{split}
\end{equation}
 Consequently, by using the relation \eqref{div tau term in Q equation} and the embedding $H^2(\Omega)\hookrightarrow L^{\infty}(\Omega)$, we obtain
\begin{equation*}
   \left|\int_{I_j}\int_\Om  -(\zeta\cdot \na)Q^n:(H^n+n\varphi^nQ^n)\,dx\ dt\right| \leq C\|Q_n\|^{2}_{L^{\infty}(0,T; H^1(\Om))}\|\zeta\|_{L^2(0,T;H^3(\Omega))}.
\end{equation*}
The last term in \eqref{3.3} can be estimated as
\begin{equation*}
    \begin{split}
    &\left| \int_{I_j}\int_\Omega (Q^nH^n-H^nQ^n):\na\zeta \,dx\ dt\right| \leq \int_{I_j} \|Q^n(t)\|_{L^4(\Om)}\|H^n(t)\|_{L^2(\Om)}\|\na\zeta(t)\|_{L^4(\Om)}\ dt \\
    &\leq C\int_{I_j}\|Q^n(t)\|_{H^1(\Om)}\|H^n(t)\|_{L^2(\Om)}\|\zeta(t)\|_{H^2(\Om)}\ dt \leq C\|Q^n\|_{L^{\infty}(0,T; H^1(\Om))}\|H^n\|_{L^2(0,T;L^2(\Om))}\|\zeta\|_{L^2(0,T;H^2(\Om))}.
    \end{split}
\end{equation*}
The other terms in \eqref{3.3} can be estimated directly, similarly as in \cite[Section 7]{MR1870954} and \cite[Proposition 3.1]{F3}. Finally, we obtain
\begin{equation*}
    \left|  \int_{I_j}\int_\Om u^n\pa_t\zeta   \,dxdt \right|\leq C(\delta,E(u_0,Q_0)) \|\zeta\|_{L^2(I_j;H^3(\Om))}.
\end{equation*}
Then Lions-Aubin-Simon Lemma applies and the rest of the proof follows from exactly the same argument as in \cite[Section 7]{MR1870954}. 
\end{proof}


\underline{\textbf{Step 3: Passing to the limit in the H equation.}} In \eqref{ws:Hn}, let us consider a test function\\ $\xi\in L^2(0,T;H_0^1(\widetilde{\mathcal{F}}_\sigma(t)))$ for a small $\sigma$. Using Lemma \ref{lemma 6.1 2d paper} we have, 
\begin{equation*}
    \varphi^n\xi=0,\quad \text{for }n \text{ sufficiently large}.
\end{equation*}
Thus it follows from  \eqref{con:Qn}, \eqref{con:Hn} and Lemma \ref{strong con lemma:Q} that 
\begin{equation}\label{eq:n limit, H}
\int_0^T\int_{\mcF(t)} \left[ \na Q:\na\xi+ \frac{\pa f_b(Q)}{\pa Q}: \xi+H : \xi \right]\ dx\ dt=0.
\end{equation}
Since $\sigma$ can be arbitrarily small, we conclude that \eqref{eq:n limit, H} holds for any $\xi\in L^2(0,T;H_0^1(\mcF(t)))$.

\underline{\textbf{Step 4: Passing to the limit in the Q-tensor equation.}} We recall the equation \eqref{ws:Qn}:
\begin{multline}\label{rc:Qn}
\int\limits_0^T\int\limits_{\Omega} \left[-\Q\cdot \partial_t \psi - (\uu\cdot \nabla)\psi\cdot \Q -\Sigma^n \Q \cdot \psi + \Q\Sigma^n\cdot \psi \right]\ dx \ dt \\ = \int\limits_{\Omega} Q_0 \cdot \psi(0)\ dx +  \int\limits_0^T\int\limits_{\Omega}\Gamma H^n\cdot \psi \ dx \ dt,
\end{multline}

We pick a test function $\psi\in H^1((0,T)\times \Om)\cap L^2(0,T;H_0^1(\mcF(t)))$ such that $\psi(T)=0$. Then from the weak convergence results \eqref{con:un}, \eqref{con:Qn}, \eqref{con:Hn} and the strong convergence results in Lemma \ref{strong con lemma:Q} and Lemma \ref{strong con lemma:u}, we can let $n\ri\infty$ in \eqref{ws:Qn} and get
\begin{equation}\label{eq: n limit, Q}
    \int\limits_0^T\int\limits_{\Om} \left[- Q:\pa_t \psi -\Gamma H : \psi -(u\cdot \nabla)\psi : Q -(\Sigma Q) : \psi + (Q\Sigma): \psi \right]\ dx \ dt =\int_{\Om} Q(0):\psi(0)\,dx
\end{equation}
Note that since $\psi=Q=0$ on $\mcS(\varphi(t))$, \eqref{eq: n limit, Q} can be written as \eqref{ws:Q}.

\underline{\textbf{Step 5: Passing to the limit in the momentum equation.}}

Again we fix a small constant $\sigma$ first. Take the test function $\zeta \in C^1([0,T]\times \Omega)\cap L^2(0,T;K_\sigma(\varphi)\cap H^3(\Om))$ such that $\zeta(T)=0$. Then due to Lemma \ref{lemma 6.1 2d paper}, we have 
\begin{equation*}
    \varphi^n D(\zeta)=0 , \quad \forall\ n\geq n_0.
\end{equation*}
We pass to the limit $n\rightarrow \infty$ in \eqref{ws:un}, the left-hand side will converge to that of \eqref{ws:u}. Now we analyze the behaviour of the right-hand side of \eqref{ws:un} as $n\rightarrow \infty$. We follow a  calculation similar to the one in \eqref{div tau term in Q equation} and use  that $D(\zeta)=0$ in $\mcS_\sigma(\varphi(t))$ to get
\begin{equation*}
    \begin{split}
&\int\limits_0^T\int\limits_{\Omega} \left[( -(\zeta\cdot\na )Q^n:(H^n+n\varphi^nQ^n) -(\Q H^n-H^n\Q):\na \zeta\right]\ dx \ dt\\
       =& \int\limits_0^T\int\limits_{\widetilde{\mathcal{F}}_\sigma(t)}  \left[\nabla Q^n\otimes \nabla Q^n: D(\zeta)+(H^nQ^n-Q^nH^n):\nabla \zeta \right]\ dx \ dt+\int\limits_0^T\int\limits_{\mcS_\sigma(\varphi(t))}  (H^nQ^n-Q^nH^n):\nabla \zeta \ dx \ dt\\
    \end{split}
\end{equation*}
We use   the strong convergence of $Q^n$ (see Lemma \ref{strong con lemma:Q}) and the weak convergence of $H^n$ (see \eqref{con:Hn}) to conclude 
\begin{equation*}
  \begin{split}
&\lim\limits_{n\rightarrow\infty} \int\limits_0^T\int\limits_{\widetilde{\mathcal{F}}_\sigma(t)}  \left[\nabla Q^n\otimes \nabla Q^n: D(\zeta)+(H^nQ^n-Q^nH^n):\nabla \zeta \right]\ dx \ dt\\ = & \int\limits_0^T\int\limits_{\widetilde{\mathcal{F}}_\sigma(t)} \left[\nabla Q\otimes \nabla Q: D(\zeta)+(HQ-QH):\nabla \zeta \right]\ dx \ dt.
\end{split}
\end{equation*}
 Next we want to show
\begin{equation}\label{limit of Hn}
    \lim\limits_{n\rightarrow\infty}\int\limits_0^T \int\limits_{\mcS_\sigma(\varphi(t))}  \left[(H^nQ^n-Q^nH^n):\nabla \zeta \right]\ dx \ dt=\int\limits_0^T\int\limits_{\mcS_\sigma(\varphi(t))}  \left[(HQ-QH):\nabla \zeta \right]\ dx \ dt
\end{equation}
We decompose the integral in the following way:
\begin{equation*}
\begin{split}
    &\int\limits_0^T \int\limits_{\mcS_\sigma(\varphi(t))}  \left[\left((H^nQ^n-Q^nH^n)-(HQ-QH)\right):\nabla \zeta \right]\ dx \ dt\\
   =&\int\limits_0^T \int\limits_{\mcS_\sigma(\varphi(t))\setminus\mcS_{\sigma/k}(\varphi(t))}  ((H^nQ^n-Q^nH^n)-(HQ-QH)):\nabla \zeta \ dx\ dt +\\
   &\qquad  +\int\limits_0^T \int\limits_{ \mcS_{\sigma/k}(\varphi(t))\setminus \mcS(\varphi(t))}  (H^nQ^n-Q^nH^n):\nabla\zeta\,dxdt+\int\limits_0^T \int\limits_{\mcS_{\sigma/k}(\varphi(t))\setminus \mcS(\varphi(t))}  (-HQ+QH):\nabla\zeta\ dxdt\\
&\qquad+\int\limits_0^T \int\limits_{  \mcS(\varphi(t))}  (H^nQ^n-Q^nH^n):\nabla\zeta\,dx\ dt-\int\limits_0^T \int\limits_{  \mcS(\varphi(t))}  (HQ-QH):\nabla\zeta\,dx\ dt \\
   =:& \left(I_1(n,k)+I_2(n,k)+I_3(k)+I_4(n)+I_5\right)
\end{split}
\end{equation*}
Firstly, due to the strong convergence of $Q^n$ in Lemma \ref{strong con lemma:Q}, we obtain
\begin{equation*}
\lim\limits_{n\ri\infty}I_1(n,k)=0\mbox{ for fixed }k.
\end{equation*}
Regarding $I_3(k)$,
\begin{equation*}
    \begin{split}
        |I_3(k)|&\leq C\|H\|_{L^2([0,T]\times\Omega)}\|Q\|_{L^2([0,T]\times\mcS_{\sigma/k}(\varphi(t))\setminus \mcS(\varphi(t)) )}\\
        &\leq C \left(\int_0^T|\mcS_{\sigma/k}(\varphi(t))\setminus \mcS(\varphi(t))|^{\f23} \|Q\|^2_{L^6(\mcS_{\sigma/k}(\varphi(t))\setminus \mcS(\varphi(t)))}dt\right)^{\f12}\\
        &\leq C\left(\int_0^T|\mcS_{\sigma/k}(\varphi(t))\setminus \mcS(\varphi(t))|^{\f23} \|Q\|^2_{L^6(\Omega)}dt\right)^{\f12}\\
        &\leq C\left(\int_0^T|\mcS_{\sigma/k}(\varphi(t))\setminus \mcS(\varphi(t))|^{\f23} \|Q\|^2_{H^1(\Om)}dt\right)^{\f12}\\
        &\leq C\|Q\|_{L^\infty(0,T;H^1(\Om))}\left(\int_0^T |\mcS_{\sigma/k}(\varphi(t))\setminus \mcS(\varphi(t))|^{\f23} \right)^{\f12}\\
        &\rightarrow 0 \ \text{ as }k\ri\infty.
    \end{split}
\end{equation*}
Here we exploit the facts $H\in L^2((0,T)\times\Om)$, $Q\in L^\infty(0,T)\times H^1(\Om))$ and Lemma \ref{strong con lemma:Q}. Furthermore, by the uniform bound of $\|Q^n\|_{ L^\infty((0,T)\times H^1(\Om))}$ and $\|H^n\|_{ L^2((0,T)\times\Om)}$ from \eqref{energy:approx}, we can obtain the convergence of $I_2(n,k)$ following similar calculation. Thus, we have
\begin{equation*}
\lim\limits_{n,k\ri\infty} (I_2(n,k)+I_3(k))=0.
\end{equation*}
Regarding $I_4(n)$, we utilize $\|Q^n\|_{L^2([0,T]\times \mcS(\varphi^n(t))}\ri 0$ deduced from \eqref{energy:approx} to infer that 
\begin{equation*}
    \lim\limits_{n\ri\infty} I_4(n)=0. 
\end{equation*}
Finally, we use that $Q=0$ in $\mcS(\varphi(t))$ to conclude $I_5=0$. Therefore \eqref{limit of Hn} follows from the estimates above.

 Thus by taking the limit $n\rightarrow\infty$ in equation \eqref{ws:un}, we obtain for all $\zeta \in C^1((0,T)\times \Omega)\cap L^2(0,T;K_\sigma(\varphi)\cap H^3(\Omega))$:
 \begin{multline*}
\int\limits_0^T\int\limits_{\Omega} \left[ u\cdot \pa_t\zeta - (u\otimes u): \na\zeta +\mu D(u):D(\zeta)+\delta \na\Delta u\cdot \na\Delta \zeta\right]\ dx \ dt = \int_{\Omega} u(0)\cdot \zeta(0)\,dx\\
+ \int\limits_0^T\int\limits_{\widetilde{\mathcal{F}}_\sigma(t)} \left[(\na Q\odot \na Q):\na \zeta -(QH):\na \zeta+ (HQ):\na\zeta \right]\ dx \ dt+\int\limits_0^T\int\limits_{\mcS_\sigma(\varphi(t))}  \left[(HQ-QH):\nabla \zeta \right]\ dx \ dt.
\end{multline*}
 Now we use a density argument, the definition $\overline{\bigcup\limits_{\sigma>0} K_\sigma(\varphi)}=K_0(\varphi)$ and the fact that $Q=0$ in $\mcS(\varphi(t))$ to conclude that $(u,Q,H)$ solves the equation
\begin{multline}\label{eq: n limit, u}
\int\limits_0^T\int\limits_{\Omega} \left[ u\cdot \pa_t\zeta - (u\otimes u): \na\zeta +\mu D(u):D(\zeta)+\delta \na\Delta u\cdot \na\Delta \zeta\right]\ dx \ dt \\
= \int_{\Omega} u(0)\cdot \zeta(0)\,dx+ \int\limits_0^T\int\limits_{\mathcal{F}(t)} \left[(\na Q\odot \na Q):\na \zeta -(QH):\na \zeta+ (HQ):\na\zeta \right]\ dx \ dt,
\end{multline}
where $\zeta \in H^1((0,T)\times \Omega)\cap L^2(0,T;K_0(\varphi))$.

Finally, after passing to the limit $n\ri\infty$ in \eqref{energy:approx}, we obtain the following energy inequality
\begin{equation}\label{energy: after n limit}
    E(u,Q)(t)+\mu\int_0^t \int_{\Omega}|D u|^2\,dx\,ds 
+ \Gamma\int_0^t\int_{\mathcal{F}(t)} |H|^2\,dx\,ds+\int_0^t\int_\Om \delta|\na\Delta u|^2\,dx\,ds
\le E(u_0,Q_0),
\end{equation}
For any $t\in[0,T)$. Moreover, we can follow the same deduction as in the Step 3 of the proof of Proposition \ref{exist-galerkin} to conclude that we can choose $T=T_{\delta}$ (depending on $\operatorname{dist}(S(\varphi_0),\partial\Omega)$, $\delta$, $u_0$, $Q_0$) such that
\begin{equation*}
\operatorname{dist}(S(\varphi(t)),\partial\Omega)>0 \text{ for }t\in[0,T).
\end{equation*}
Thus, the process of passing $n\ri\infty$ is complete.

\subsection{Pass to the limit as $\delta\rightarrow 0$} 
We first summarise the solution we get from the analysis above. For any $\de>0$, there are solutions on the time interval $(0,T_\delta)$, 
\begin{equation*}
\begin{split}
&u_\de \in L^{\infty}(0,T_\delta; H(\Omega))\cap L^2(0,T_\de;K(\varphi_\de)\cap H^3(\Om)), \\
&Q_\de\in L^{\infty}(0,T_\de; H^1_0(\Om))\cap L^2(0,T_\de;H^2(\Om\setminus \mcS(\varphi_\de(t)))),\ \varphi_\de(t)Q_\de=0,\\
&\varphi_\de\in \mbox{Char}\ (\Omega)\cap C^{0,\frac{1}{p}}(0,T_\de;L^p(\Omega)),\ p\in[1,+\infty), \quad H_\de\in L^2(0,T_\de,L^2(\Om)),
\end{split}
\end{equation*}
that satisfy the following equations 
\begin{equation}\label{ws:Q_de}
\int\limits_0^{T_\de}\int\limits_{\Om} \left[- Q_\de:\pa_t \psi -\Gamma H_\de : \psi +(u_\de\cdot \nabla)Q_\de : \psi -(\Sigma_\de Q_\de) : \psi + (Q_\de\Sigma_\de): \psi \right]\ dx \ dt =\int_{\Om} Q(0):\psi(0)\,dx
\end{equation}
\begin{equation}\label{ws:H_de}
\int_0^{T_\de}\int_{\Om} \left[ \na Q_\de:\na \xi+ \frac{\pa f_b(Q_\de)}{\pa Q}: \xi+H_\de : \xi \right]\ dx\ dt=0
\end{equation}
\begin{multline}\label{ws:u_de}
\int\limits_0^{T_\de}\int\limits_{\Omega} \left[-u_\de\cdot \pa_t\zeta - (u_\de\otimes u_\de): \na\zeta +\mu D(u_\de):D(\zeta)+\delta \na\Delta u_\de :\na\Delta \zeta\right]\ dx \ dt \\
= \int_{\Omega} u(0)\cdot \zeta(0)\,dx+ \int\limits_0^{T_\de}\int\limits_{\Om} \left[(\na Q_\de\odot \na Q_\de):\na \zeta -(Q_\de H_\de):\na \zeta+ (H_\de Q_\de):\na\zeta \right]\ dx \ dt,
\end{multline}
\begin{equation}\label{ws:solid_de}
\int\limits_0^{T_\de}\int\limits_{\Omega} \varphi_\de \left[\partial_t\eta+(u_\de\cdot\nabla)\eta\right]\ dx\ dt= - \int\limits_{\Omega} \varphi(0) \eta(0)\ dx,
\end{equation}
 for any functions 
 \begin{align*}
 &\psi\in H^1((0,T_\de)\times \Om)\cap L^2(0,T_\de; H_0^1(\Om\setminus \mcS(\varphi_\de(t)))),\  \psi(T_\de)=0,\\
 &\xi\in L^2(0,T_\de;H^1_0(\Om\setminus \mcS(\varphi_\de(t)))), \\
 &\zeta\in H^1((0,T_\de)\times \Omega)\cap L^2(0,T;K(\varphi_\de)), \ \zeta(T_\de)=0,\\
 &\eta\in C^1([0,T_\de]\times\Omega), \ \eta(T_\de)=0.   
 \end{align*}.

Here we always integrate over the whole domain $\Om$ since the solid part $\mcS(\varphi_\de(t))$ is varying with respect to $\de$. Note that here $T_\de$ satisfies that $\mbox{dist}( \mcS(\varphi_\de(t)), \pa\Om)>0$ for any $t\in (0,T_\de)$. Moreover, according to the analysis in Step 3 of the proof of Proposition \ref{exist-galerkin}, $T_\de$ depends on $\de$ and the initial data. In this subsection we will utilize the property of the rigid velocity field to show that there exists a time $T(u_0,Q_0,\varphi_0)$ such that $T(u_0,Q_0,\varphi_0)<T_\de$ for any small $\de$ uniformly. The arguments follow largely \cite[Section 3.2]{HOST}.

Let $X_{s,t}^\delta(y)$ be the unique solution to 
\begin{equation*}
    \f{d X_{s,t}^\de(y)}{dt}=u_\de(t,X_{s,t}^\de(y)),\quad X_{s,s}^\de(y)=y,\quad \forall y\in \Omega, (s,t)\in[0,T_\de]^2.
\end{equation*}
Then $\varphi_\de$ satisfies
\begin{equation*}
    \varphi_\de(x,t)=\varphi_0(X_{0,t}^\de(x)),\quad \mcS(\varphi_\de(t))=X_{0,t}^\de(\mcS(\varphi_0)).
\end{equation*}
Since $u_\de\in K(\varphi_\de)$ for a.e. $t\in[0,T_\de]$, there exist functions $h_\de(t)$ and $\omega_\de(t)$ such that 
\begin{equation*}
    u_\de(x,t)=\f{d\, h_\de(t)}{d\,t}+\omega_\de(t)\times (x-h_\de(t)),\quad \forall x\in\mcS(\varphi_\de(t)),\ t\in[0,T_\de],
\end{equation*}
where $h_\de(t)$ and $\omega_\de(t)$ are the centre of mass and angular velocity of $\mcS(\varphi_\de(t))$ respectively. The definition of the centre of mass, mass of the body \eqref{cm} and moment of inertia \eqref{moment of inertia} give:
\begin{equation*}
\int\limits_{\mcS(\varphi_\delta(t))} |u_\delta(x,t)|^2\,dx=m\left|\f{d\, h_\de(t)}{d\,t}\right|^2+ J\omega_{\delta}\cdot \omega_{\delta}  
\end{equation*}
It follows immediately from the relation above and $\|u_\de\|_{L^\infty(0,T_\de; L^2(\Omega))}\leq C(u_0,Q_0,\varphi_0)$ that 
\begin{equation*}
    \left\|\f{d\,h_\de(t)}{d\,t}\right\|_{L^\infty(0,T_\de)}\leq C(u_0,Q_0,\varphi_0)\quad\mbox{ and }\quad\|\omega_\delta(t)\|_{L^\infty(0,T_\de)}\leq C(u_0,Q_0,\varphi_0).
\end{equation*}
Therefore we have 
\begin{equation*}
    \|u_{\delta}\|_{L^\infty(0,T_\de;\mcS(\varphi_\de))}\leq C(u_0,Q_0,\varphi_0).
\end{equation*}
Since $X_{s,t}^\de(y)$ satisfies
\begin{equation*}
    X_{s,t}^\de(y)-y=\int_s^t u_\de( X_{s,\tau}^\de(y),\tau)\,d\tau, \quad \forall y\in \mcS(\varphi_\de(s)),
\end{equation*}
we can infer that $X_{s,t}^\de(y)$ restricted to $\mcS(\varphi_\de(s))$ is Lipschitz continuous in $t$  uniformly with respect to $\delta$. Moreover, it follows by the same deduction as in the Step 3 of the proof of Proposition \ref{exist-galerkin} (but now with time $T_0$ uniform with respect to $\delta$) that there exists a $T_0=T_0(u_0,Q_0,\varphi_0)$ and an $\alpha>0$ such that for any $\delta>0$ and $t<T_0$,
\begin{equation}\label{dis to bdy}
    \mbox{dist}(\mcS(\varphi_\de(t)),\pa\Om)\geq \alpha,
\end{equation}
which further implies that $T_\de>T_0$ for all $\delta>0$. 

Define the rigid velocity field $U_\de(t,x): [0,T_0]\times\mathbb{R}^3\rightarrow \mathbb{R}^3$ and the corresponding isometries $M_{s,t}^\de(y)$ by
\begin{equation*}
\begin{split}
    &\quad U_\de(t,x)=\f{d\, h_\de(t)}{d\, t}+\omega_\de(t)\times (x-h_\de(t)),\quad \forall \ x\in \mathbb{R}^3,\ t\in[0,T_0],\\
    & \f{d\, M_{s,t}^\delta(y) }{d\,t}=U_\de(t, M_{s,t}^\delta(y)),\quad M_{s,s}^\de(y)=y, \quad \forall y\ \in\mathbb{R}^3,\ (s,t)\in [0,T_0]^2. 
\end{split}
\end{equation*}
Utilizing the uniform bounds of $ \|\f{d\,h_\de(t)}{d\,t}\|_{L^\infty(0,T_\de)}$ and $ \|\omega_\delta(t)\|_{L^\infty(0,T_\de)}$ we have that $U_\delta(t,x)$ is bounded on $[0,T_0]\times K$ uniformly with respect to $\delta$ for any $K\subset\joinrel\subset \mathbb{R}^3$. Consequently,  $M_{s,t}^\de(y)$ is Lipschitz in $t,\,s$ uniformly with respect to $\de$. By the same argument as in the Step 1 of Subsection \ref{subsec: n limit}, we obtain the following lemma:
\begin{lemma}\label{lemma:isometry}
There exist $\varphi(t,x)$ and $M_{s,t}(y)$ such that, up to some subsequence $\de\ri 0$,
\begin{align*}
&\varphi_\de \ri \varphi \text{ strongly in } L^2([0,T_0]\times \Omega),\\
&M_{s,t}^\de \ri M_{s,t} \text{ strongly in }C([0,T_0]^2\times\mathbb{R}^3),\\
& \mcS(\varphi(t))=M_{0,t}(\mcS(\varphi_0)),\quad \forall t\in[0,T_0].
\end{align*}
The isometry $M_{0,t}$ is Lipschitz continuous with respect to $t\in [0,T_0]$, i.e. 
\begin{equation*}
    \|M_{0,t_1}-M_{0,t_2}\|_{C(K,\mathbb{R}^3)}\leq c(K,u_0,Q_0,\varphi_0)|t_1-t_2|,\ \ \forall K\subset\joinrel\subset \mathbb{R}^3,\; t_1,t_2\in [0,T_0].
\end{equation*}
Moreover, for any $\sigma>0$, there exists $\de_0$ (depending on $\sigma$ and initial data) such that
\begin{equation}\label{rel:Sdelta}
\mcS(\varphi_\de(t))\subset \mcS_\sigma(\varphi(t))\mbox{ and }\mcS(\varphi(t))\subset \mcS_\sigma(\varphi_\de(t))\mbox{ hold for all }\de<\de_0\mbox{ and }t\in[0,T_0].
\end{equation}
\end{lemma}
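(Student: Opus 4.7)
The plan is to obtain the limit isometry $M_{s,t}$ by a direct Arzelà–Ascoli argument on the family $\{M_{s,t}^\delta\}$, and then use it to transfer the convergence to $\varphi_\delta$. Since $u_\delta\in K(\varphi_\delta)$, the rigid velocity field $U_\delta(t,x)=h_\delta'(t)+\omega_\delta(t)\times(x-h_\delta(t))$ is affine in $x$ with coefficients uniformly bounded in $\delta$ on $[0,T_0]$. Hence $M^\delta_{s,t}$ is the flow of an affine vector field with uniform bounds, so each $M^\delta_{s,t}$ is an affine isometry $M^\delta_{s,t}(y)=a^\delta(s,t)+R^\delta(s,t)y$ with $R^\delta(s,t)\in SO(3)$. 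Writing out $\partial_t a^\delta$ and $\partial_t R^\delta$ in terms of $h_\delta'$, $\omega_\delta$ and the state, one gets uniform $L^\infty$ bounds on $\partial_t M^\delta_{s,t}$ (and by symmetry on $\partial_s M^\delta_{s,t}$) on any compact $K\subset\mathbb{R}^3$, depending only on $(u_0,Q_0,\varphi_0)$ and $K$.

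First I would invoke Arzelà–Ascoli on $C([0,T_0]^2\times K;\mathbb{R}^3)$ for a nested exhaustion $K_j\Subset\mathbb{R}^3$, together with a diagonal extraction, to obtain $M^\delta_{s,t}\to M_{s,t}$ uniformly on $[0,T_0]^2\times K$ for every compact $K$. The limit $M_{s,t}$ is a pointwise limit of isometries, hence itself an isometry, and inherits the Lipschitz estimate $\|M_{0,t_1}-M_{0,t_2}\|_{C(K)}\le c(K,u_0,Q_0,\varphi_0)|t_1-t_2|$ by passing to the limit in the Lipschitz estimate for $M^\delta_{0,t}$. Also, along the same subsequence one has, after a further extraction if needed, $h_\delta\to h$ in $C([0,T_0])$ and $R^\delta\to R$ in $C([0,T_0]^2;SO(3))$.

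Next I would identify $\varphi$ and prove its strong convergence. Because $u_\delta$ restricted to $\mathcal{S}(\varphi_\delta(t))$ coincides with $U_\delta(t,\cdot)$, the characteristic flow $X^\delta_{0,t}$ and the rigid flow $M^\delta_{0,t}$ agree on $\mathcal{S}(\varphi_0)$, so the transport representation gives
\begin{equation*}
\varphi_\delta(t,\cdot)=\mathbb{1}_{M^\delta_{0,t}(\mathcal{S}(\varphi_0))}.
\end{equation*}
Uniform convergence $M^\delta_{0,t}\to M_{0,t}$ on the compact $\overline{\mathcal{S}(\varphi_0)}$ and the fact that $M_{0,t}$ is an isometry (so $\partial M_{0,t}(\mathcal{S}(\varphi_0))$ has Lebesgue measure zero, inheriting the $C^2$ regularity of $\partial\mathcal{S}(\varphi_0)$ modulo an isometry) imply $\mathbb{1}_{M^\delta_{0,t}(\mathcal{S}(\varphi_0))}\to \mathbb{1}_{M_{0,t}(\mathcal{S}(\varphi_0))}$ in $L^2(\Omega)$ for each $t$, with a uniform-in-$t$ dominated bound. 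Dominated convergence then yields $\varphi_\delta\to\varphi:=\mathbb{1}_{M_{0,t}(\mathcal{S}(\varphi_0))}$ strongly in $L^2([0,T_0]\times\Omega)$, which simultaneously establishes $\mathcal{S}(\varphi(t))=M_{0,t}(\mathcal{S}(\varphi_0))$.

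Finally, the neighborhood relation \eqref{rel:Sdelta} is a direct consequence of the uniform convergence of $M^\delta_{0,t}$ on $\overline{\mathcal{S}(\varphi_0)}$: given $\sigma>0$, choose $\delta_0$ so small that $\sup_{t\in[0,T_0]}\sup_{y\in\mathcal{S}(\varphi_0)}|M^\delta_{0,t}(y)-M_{0,t}(y)|<\sigma$ for all $\delta<\delta_0$, which combined with the fact that both maps are isometries yields both inclusions uniformly in $t$. The main obstacle is the identification step in the previous paragraph: one must justify that the solid part of $u_\delta$ really is a rigid motion and that transport preserves this structure at the weak level, so that $\varphi_\delta(t,\cdot)$ coincides with the indicator of the image of $\mathcal{S}(\varphi_0)$ under the \emph{rigid} flow $M^\delta_{0,t}$, rather than only under the possibly weaker DiPerna–Lions flow $X^\delta_{0,t}$; this relies crucially on $u_\delta\in K(\varphi_\delta)$ together with the regularity $u_\delta\in L^2(0,T_\delta;H^3(\Omega))$ ensured by the $\delta$-regularization, which makes the two flows classical and hence equal on $\mathcal{S}(\varphi_\delta(t))$.
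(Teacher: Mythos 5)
Your proposal is correct and follows essentially the same route as the paper: uniform bounds on $h_\delta'$, $\omega_\delta$ give uniform Lipschitz control of the rigid flows $M^\delta_{s,t}$, compactness (Arzel\`a--Ascoli) yields the limit isometry, and the convergence of $\varphi_\delta$ together with the inclusions \eqref{rel:Sdelta} is transferred from the convergence of the flows, exactly as the paper does by invoking the argument of Step 1 of Subsection \ref{subsec: n limit} and the results of \cite{MR1870954,HOST}. The only difference is presentational: you verify directly, using $u_\delta\in K(\varphi_\delta)$ and the $H^3$ regularity to identify $X^\delta_{0,t}$ with $M^\delta_{0,t}$ on the solid and then apply dominated convergence, the points the paper delegates to those references, which is a legitimate and slightly more self-contained way to fill in the same proof.
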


Now for the system \eqref{ws:Q_de}--\eqref{ws:solid_de} we can pass to the limit $\de\rightarrow 0$ in the same way as in Subsection \ref{subsec: n limit} to get a weak solution $(u,Q,\varphi,H)$ to \eqref{Q:fluid}--\eqref{initial} in the sense of Definition \ref{def:weak sol}.  The energy inequality \eqref{energy:main} follows by taking $\de\ri 0$ in \eqref{energy: after n limit}. The distance to the boundary has been characterised by $\alpha$, as in \eqref{dis to bdy}. Now we can choose $\sigma=\frac{\alpha}{4}$ in \eqref{rel:Sdelta} to obtain 
\begin{equation*}
\mcS(\varphi_\de(t))\subset \mcS_{\frac{\alpha}{4}}(\varphi(t))\subset \mcS_{\frac{\alpha}{2}}(\varphi_\de(t))\mbox{ holds for all }\de<\de_0\mbox{ and }t\in[0,T_0],
\end{equation*}
which implies 
\begin{equation*}
    \mbox{dist}(\mcS(\varphi(t)),\pa\Om)\geq \frac{\alpha}{2} >0.
\end{equation*}
This completes the proof of the following local existence theorem.
\begin{theorem}[Local existence]\label{local existence}
Assume $u_0\in H(\Om)$, $Q_0\in H_0^1(\mcF(0);M)$, $\mcS_0=\mcS(\varphi_0)$, $\mbox{dist}(\mcS_0,\pa\Om)=2\alpha>0$, and the boundaries $\pa\mcS,\,\pa\Om$ are of class $C^2$. There exists $T_0=T_0(u_0,Q_0,\varphi_0,\alpha)$ such that the system \eqref{Q:fluid}--\eqref{initial} admits at least one weak solution $(u,Q,H,\varphi)$ on the time interval $(0,T_0)$ which satisfies the energy inequality \eqref{energy:main}. In particular the solution satisfies
\begin{equation*}
    \mbox{dist}(\mcS(\varphi(t)),\pa\Omega)\geq \alpha,\quad \forall\ t\in (0,T_0).
\end{equation*}
\end{theorem}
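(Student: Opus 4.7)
The plan is to assemble the machinery developed in Sections \ref{sec3} and \ref{sec4} into a two-step limiting procedure. First I would invoke Theorem \ref{exist:approx} to obtain a solution $(u^{n}, Q^{n}, H^{n}, \varphi^{n})$ to the doubly approximate system \eqref{ws:Qn}--\eqref{ws:solidn} on some interval $[0, T_{n})$ on which the solid stays away from $\pa\Om$, then pass first to the limit $n \to \infty$ with $\de > 0$ fixed, and subsequently $\de \to 0$.

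For the $n \to \infty$ limit, the uniform energy estimate \eqref{energy:approx} yields the weak(-$\ast$) convergences \eqref{con:un}--\eqref{con:phin}. The penalty terms $n\varphi^n |D u^n|^2$ and $n\varphi^n |Q^n|^2$ force $\varphi D u = 0$ and $Q = 0$ on $\mcS(\varphi)$ in the limit, so the candidate velocity belongs to $L^2(0,T;K(\varphi))$ and the candidate $Q$ vanishes on the solid. The transport equation passes to the limit via the DiPerna--Lions theory together with Lemma \ref{lemma 6.1 2d paper}, which furnishes the inclusion $\mcS(\varphi^n(t)) \subset \mcS_\sigma(\varphi(t))$ for $n$ large and allows test functions supported away from the solid to annihilate the penalty terms, so that the $Q$-tensor and $H$-equations on the fluid domain can be extracted. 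Strong convergence of $Q^n$ in $L^2(0,T; H^1(\widetilde{\mcF}_\sigma(t)))$ is obtained through a local $H^2$-estimate on $\Om \setminus \mcS_j$ (slicing time into subintervals on which the solid sits inside a fixed $\mcS_j$), combined with an $L^2(H^{-1})$ bound on $\pa_t Q^n$ via Lions--Aubin--Simon. Strong convergence of $u^n$ in $L^2(0,T; L^2(\Om))$ uses the $\de|\na\Delta u^n|^2$ regularization to bound $\pa_t u^n$ in $L^2((H^3)')$, following \cite{MR1870954, F3, HOST}.

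The most delicate passage is in the momentum equation, where the stress terms $(\na Q^n \odot \na Q^n) : \na \zeta$ and $(Q^n H^n - H^n Q^n) : \na \zeta$ must be treated near the solid. I would split each integral into the bulk $\widetilde{\mcF}_\sigma(t)$, a thin shell $\mcS_\sigma(\varphi(t)) \setminus \mcS(\varphi(t))$, and the interior $\mcS(\varphi(t))$: the bulk converges by strong $L^2 H^1$ convergence of $Q^n$ and weak convergence of $H^n$; the shell is controlled via the $L^\infty_t H^1_x$ bound on $Q$ combined with $|\mcS_\sigma(\varphi(t)) \setminus \mcS(\varphi(t))| \to 0$ as $\sigma \to 0$; the solid interior contributes nothing since $Q = 0$ there. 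This yields the $\de$-approximate solution $(u_\de, Q_\de, H_\de, \varphi_\de)$ satisfying \eqref{ws:Q_de}--\eqref{ws:solid_de} and \eqref{energy: after n limit}.

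The main obstacle is the subsequent $\de \to 0$ limit, since a naive repetition of Step 3 of Proposition \ref{exist-galerkin} gives $T_\de \sim \de$ and collapses the existence time to zero. To produce a $\de$-uniform lifespan I would exploit that on the solid $u_\de$ is a rigid motion $\ell_\de(t) + \omega_\de(t) \times (x - h_\de(t))$; hence $|\ell_\de|$ and $|\omega_\de|$ are controlled in $L^\infty_t$ directly by $\|u_\de\|_{L^\infty(L^2)}$ via the mass and moment of inertia, \emph{without} recourse to the $H^3$-regularization. The rigid extension $U_\de$ defined on all of $\mathbb{R}^3$ then has a flow $M^\de_{s,t}$ that is Lipschitz uniformly in $\de$, from which Arzelà--Ascoli extracts a limit isometric flow $M_{s,t}$ (Lemma \ref{lemma:isometry}). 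This provides a uniform time $T_0(u_0, Q_0, \varphi_0, \alpha)$ on which $\mathrm{dist}(\mcS(\varphi_\de(t)), \pa\Om) \geq \alpha$ holds for all small $\de$. The limit $\de \to 0$ in \eqref{ws:Q_de}--\eqref{ws:solid_de} is then carried out by repeating the arguments of the $n \to \infty$ step, dropping the non-negative $\de|\na\Delta u|^2$ contribution in the energy inequality to obtain \eqref{energy:main}, and deducing the final distance bound from \eqref{rel:Sdelta} with the choice $\sigma = \alpha/4$.
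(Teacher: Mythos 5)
Your proposal is correct and follows essentially the same route as the paper: Theorem \ref{exist:approx}, the $n\to\infty$ limit with Lemma \ref{lemma 6.1 2d paper}, the time-slicing strong-convergence lemmas, the bulk/shell/solid splitting of the stress terms, and then the $\delta$-uniform lifespan obtained from the rigid-motion structure of $u_\de$ on the solid, Lemma \ref{lemma:isometry}, and the choice $\sigma=\alpha/4$ in \eqref{rel:Sdelta}. The only sketch-level imprecision is the claim that the solid-interior stress integral vanishes "since $Q=0$ there": for the approximating terms $(Q^nH^n-H^nQ^n)$ one must, as in the paper's estimate of $I_4(n)$, invoke the penalization bound $n\int\varphi^n|Q^n|^2\le E(u_0,Q_0)$ to make $Q^n$ small on the solid region before using $Q=0$ for the limit term.
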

\subsection{Extend the solution beyond the collision time}

The last step of the proof of Theorem \ref{exist:main} is to extend the local solution obtained in Theorem \ref{local existence} to any time interval $[0,T]$. The argument essentially follows \cite[Section 4]{F3}. 

\begin{lemma}\label{extension of sol}
Let $(u,Q,H,\varphi)$ be a weak solution in the sense of Definition \ref{def:weak sol} on $(0,T_0)$ such that \begin{equation*}
    \mbox{dist}(\mcS(\varphi(t)),\pa\Om)>0,
\end{equation*}
for all $t\in(0,T_0)$.

Then there exist $u(T_0)\in H(\Om)$, $Q(T_0)\in H_0^1(\mcF(T_0))$, $\varphi(T_0)\in \mbox{Char}(\Om)$ such that 
\begin{align}
   \label{exist u(T0)} &\int\limits_\Omega \zeta \cdot u(T_0)\,dx=\lim\limits_{t\uparrow T_0}\int\limits_\Om\zeta \cdot u(t)\,dx\ \text{ for any }\zeta\in K_0(\varphi(T_0)),\\
   \label{exist Q(T_0)} 
   &\langle\psi, \tilde{Q}(T_0)\rangle
   =\lim\limits_{t\uparrow T_0} \langle\psi, \tilde{Q}(t)\rangle
   \text{ for any }\psi\in H^{-1}(\Omega),\\
    &\int\limits_\Omega \eta  \varphi(T_0)\,dx=\lim\limits_{t\uparrow T_0}\int\limits_\Om\eta \varphi(t)\,dx\ \text{ for any }\eta\in C^1(\Om),
\end{align}
where $\mcF(T_0)=\Om\setminus \mcS(\varphi(T_0))$, $\langle \cdot,\cdot  \rangle$ denotes the usual $L^2$ inner product between $H^{-1}(\Om)$ and $H^1_0(\Om)$, and $\tilde{Q}(t)$ is defined by 
\begin{equation*}
    \tilde{Q}(t,x)=\begin{cases} Q(t,x), & x\in\mcF(t),\\
    0,& x\in\mcS(t),
    \end{cases},\quad \forall t\in(0,T_0].
\end{equation*}

Finally, if $(u, Q, H, \varphi)$ weakly solves the system on $(T_0,T_1)\times \Om$ with the initial data $(u(T_0), Q(T_0), \varphi(T_0))$, then $(u,Q,H,\varphi)$ is a weak solution on $(0,T_1)\times \Om$.
\end{lemma}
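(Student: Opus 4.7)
The plan has three ingredients: establish the weak-limit trace values $u(T_0)$, $\tilde Q(T_0)$, $\varphi(T_0)$; verify that the concatenated quadruple satisfies the weak formulation on $(0,T_1)$; and ensure that the traces live in the spaces in which Definition \ref{def:weak sol} is phrased so that they are admissible as initial data for any extension on $(T_0,T_1)$. The limit for $\varphi$ is free: since $\varphi \in C^{0,1/p}(0,T_0;L^p(\Omega))$, the net $\varphi(t)$ is Cauchy in $L^p$ as $t\uparrow T_0$ and the limit, which coincides with $\varphi_0\circ (X_{0,T_0})^{-1}$, is again a characteristic function of a region $\mcS(\varphi(T_0))$ staying at positive distance from $\partial \Omega$.

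For $u(T_0)$, I would fix $\zeta\in K_\sigma(\varphi(T_0))$ for some $\sigma>0$ and use the momentum equation \eqref{ws:u} with product test function $\chi(t)\zeta(x)$. By the Lipschitz continuity of the isometries $M_{s,t}$ established in Lemma \ref{lemma:isometry}, $\mcS(\varphi(t))\subset \mcS_\sigma(\varphi(T_0))$ for $t$ in a left neighbourhood of $T_0$, so $\zeta\in K(\varphi(t))$ there and the test function is admissible. The energy inequality bounds every term on the right-hand side of \eqref{ws:u} in $L^1_t$, so $t\mapsto \int_\Omega u(t)\cdot \zeta\,dx$ is absolutely continuous near $T_0$ and admits a limit; density of $\bigcup_{\sigma>0} K_\sigma(\varphi(T_0))$ in $K_0(\varphi(T_0))$ together with the uniform bound $\|u(t)\|_{L^2}\le E(u_0,Q_0)^{1/2}$ identifies a unique limit $u(T_0)\in H(\Omega)$. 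The argument for $\tilde Q(T_0)$ is analogous in spirit: extend $Q(t)$ by zero to $\Omega$, combine \eqref{ws:Q} and \eqref{ws:H} to bound $\pa_t\tilde Q$ in $L^2(0,T_0;H^{-1}(\Omega))$ by testing against functions supported away from $\mcS(\varphi(t))$, and conclude $\tilde Q\in C([0,T_0];H^{-1}(\Omega))$; interpolation with the uniform $L^\infty(0,T_0;H^1_0(\Omega))$ bound then yields $\tilde Q(T_0)\in H^1_0(\Omega)$, whose support lies in $\mcF(T_0)$ thanks to the strong convergence of the solid domains.

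Given the traces, the concatenation is verified by splitting each of \eqref{ws:Q}--\eqref{ws:solid} at $t=T_0$: on each half the weak formulation holds with its own initial condition, and the two boundary contributions arising from integration by parts in time at $T_0$ cancel exactly because of the pointwise identities \eqref{exist u(T0)}--\eqref{exist Q(T_0)} that define the trace values. The energy inequality on $(0,T_1)$ follows from the two partial ones by weak lower semicontinuity of the energy functional at $T_0$. The main obstacle in this programme is the second half of the trace step above: showing that the weakly extracted $\tilde Q(T_0)$ genuinely lies in $H^1_0(\mcF(T_0))$, i.e.\ that its trace on the moving interface $\partial \mcS(\varphi(T_0))$ vanishes. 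Since this interface is itself defined only in the limit, the vanishing has to be deduced from the vanishing of $Q(t)$ on $\partial \mcS(\varphi(t))$ combined with a change of variables by $M_{0,t}$, exploiting the uniform Lipschitz regularity of the isometries from Lemma \ref{lemma:isometry}. Once this spatial regularity is secured, matching test functions across $T_0$ reduces to the bookkeeping of \cite[Section 4]{F3}.
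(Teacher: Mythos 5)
Your treatment of $\varphi(T_0)$, $u(T_0)$ and the concatenation step is essentially the paper's own argument: you test \eqref{ws:u} with a product $\zeta_0\Phi(t)$, $\zeta_0\in K_\sigma(\varphi(T_0))$ and $\Phi$ supported in a left neighbourhood of $T_0$, use the Lipschitz continuity of the isometries from Lemma \ref{lemma:isometry} to ensure $\mcS(\varphi(t))\subset \mcS_\sigma(\varphi(T_0))$ there (so the test function is admissible), deduce that $t\mapsto\int_\Omega u(t)\cdot\zeta_0\,dx$ has a limit, and then pass to all of $K_0(\varphi(T_0))$ by density together with the uniform energy bound (the paper phrases this last identification via Hahn--Banach and Riesz representation, which is the same content); the splitting of the weak formulation at $t=T_0$ for the gluing statement is also what the paper intends. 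One incidental claim should be deleted: you assert that $\mcS(\varphi(T_0))$ stays at positive distance from $\partial\Omega$. The hypothesis only gives this for $t<T_0$, and in the intended application $T_0=T_{max}$ is precisely the collision time, where the distance vanishes; nothing in the lemma needs this claim.

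The genuine gap is in the $Q$-trace. You claim $\partial_t\tilde Q\in L^2(0,T_0;H^{-1}(\Omega))$ ``by testing against functions supported away from $\mcS(\varphi(t))$'' and conclude $\tilde Q\in C([0,T_0];H^{-1}(\Omega))$. But the weak formulation \eqref{ws:Q} only admits test functions $\psi(t)\in H^1_0(\mcF(t))$, so what you actually control is the action of $\partial_t\tilde Q$ on functions vanishing on the moving solid; that is not an $H^{-1}(\Omega)$ bound, since a generic $\psi\in H^1_0(\Omega)$ is supported neither in the fluid nor in the solid at all nearby times, and the extension by zero across the moving interface produces contributions that you have not estimated. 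Since \eqref{exist Q(T_0)} is asserted for every $\psi\in H^{-1}(\Omega)$, this bridge must be built. The repair is to mirror the $u$-argument literally: for a fixed test function supported in $\Omega\setminus\overline{\mcS_\sigma(\varphi(T_0))}$ the limit of $\langle\psi,\tilde Q(t)\rangle$ as $t\uparrow T_0$ exists via the equation; then combine the uniform $L^\infty(0,T_0;H^1_0(\Omega))$ bound (weak compactness of $\{\tilde Q(t)\}$), the fact that $\tilde Q(t)=0$ on $\mcS(\varphi(t))$, and the quantitative smallness of $\|\tilde Q(t)\|_{L^2}$ on the collar $\mcS_\sigma(\varphi(T_0))\setminus\mcS(\varphi(T_0))$ (exactly as in the estimate of $I_3$ in Subsection \ref{subsec: n limit}) to show that all weak limit points coincide and vanish on $\mcS(\varphi(T_0))$; this yields \eqref{exist Q(T_0)} for all $\psi\in H^{-1}(\Omega)$ and $Q(T_0)\in H^1_0(\mcF(T_0))$. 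The paper is terse here (``by a similar argument''), but that is the argument it relies on, and your $L^2(H^{-1})$/interpolation shortcut does not substitute for it as written.
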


\begin{proof}
First we establish the existence of $\varphi(T_0)$. According to Lemma \ref{lemma:isometry}, $M_{0,t}$ is Lipschitz continuous with respect to $t$. By the same argument as in \cite[Lemma 2.2]{F3}, we can define $\varphi(T_0)$ by
\begin{equation*}
    \mcS(\varphi(T_0)):=M_{0,T_0}(\mcS(\varphi_0))=\lim\limits_{t\uparrow T_0}M_{0,t}(\mcS(\varphi_0)). 
\end{equation*}
Immediately we set
\begin{equation*}
    \mcF(T_0):=\Om\setminus \mcS(\varphi(T_0))=\lim\limits_{t\uparrow T_0}F(t).
\end{equation*}
Fix $\sigma\ll 1$ and let $\zeta_0$ be an arbitrary test function in $K_\sigma(\varphi(T_0))$. Because of the Lipschitz continuity of $M_{0,t}$, there exists $t_\sigma$ independent of $T_0$ such that 
\begin{equation*}
    \mcS(\varphi(t))\subset \mcS(\varphi(T_0))_\sigma,\quad \forall t\in [T_0-t_\sigma,T_0].
\end{equation*}
Take $\Phi(t) \in \mathcal{D}([T_0-t_\sigma,T_0)$. We substitute $\zeta=\zeta_0\Phi(t)$ in \eqref{ws:u} to obtain
\begin{multline}\label{ws:u T0-t_s to T0}
\int\limits_{T_0-t_\sigma}^{T_0}\int\limits_{\Omega} \left[-(u\cdot\zeta_0)\cdot \pa_t\Phi - (u\otimes u): \na\zeta_0\Phi +\mu D(u):D(\zeta_0)\Phi\right]\ dx \ dt \\
=  \int\limits_{T_0-t_\sigma}^{T_0}\int\limits_{\mcF(t)} \left[(\na Q\odot \na Q):\na \zeta_0\Phi -(QH-HQ):\na \zeta_0\Phi \right]\ dx \ dt.
\end{multline}
Thus we find the following equality holds in the distribution sense on $(T_0-t_\sigma,T_0)$:
\begin{align*}
    \f{d}{dt} \bigg(\int\limits_{\Om} u(t)\cdot \zeta_0\,dx\bigg)=\int\limits_\Om \bigg( (u&\otimes u):\na \zeta_0-\mu D(u):D(\zeta_0) \bigg) \,dx\\
    &+\int\limits_{\mcF(t)}(\na Q\odot\na Q-QH+HQ):\na\zeta_0 \,dx,
\end{align*}
which further implies 
\begin{equation*}
    \lim\limits_{t\uparrow T_0}\int_{\Omega} u(t)\cdot \zeta_0\,dx \ \text{ has a limit }
\end{equation*}
for any test function $\zeta_0\in K_\sigma(\varphi(T_0))$. By a density argument we can identify this limit as a linear functional on the vector space $K_0(\varphi(T_0))$, i.e. 
\begin{equation*}
    \lim\limits_{t\uparrow T_0}\int_{\Omega} u(t)\cdot \zeta\,dx=L\zeta\quad \text{ for any }\zeta\in K_0(\varphi(T_0)). 
\end{equation*}
Moreover, by the energy inequality we have 
\begin{equation*}
    |L\zeta|\leq \|\zeta\|_{L^2(\Om)} \liminf\limits_{t\uparrow T_0}\|u(t)\|_{L^2}\leq \|\zeta\|_{L^2(\Om)}\sqrt{2E(u_0,Q_0)}.
\end{equation*}
By Hahn-Banach theorem $L$ can be extended as a bounded linear functional on $H(\Omega)$. Then utilizing the Riesz representation theorem we find a function $u(T_0)\in H(\Om)$ such that \eqref{exist u(T0)} holds and
\begin{equation*}
    \|u(T_0)\|_{L^2(\Om)}\leq \liminf\limits_{t\uparrow T_0} \|u(t)\|_{L^2(\Om)}.
\end{equation*}

By a similar argument we can also find a function $Q(T_0)\in H_0^1(\mcF(T_0))$ that satisfies \eqref{exist Q(T_0)}. 

For the last part of the lemma, if $(u, Q, H, \varphi)$ is a weak solution of the system on $(T_0,T_1)\times \Om$ with the initial data $(u(T_0), Q(T_0), \varphi(T_0))$, then $(u,Q,H,\varphi)$ is a weak solution on the combined time interval $(0,T_1)$ which directly follows from the definition of the weak solution. We omit the details here.
\end{proof}

An immediate consequence of the lemma above is that the weak solution in Theorem \ref{local existence} can exist upon the collision time of the colloid and boundary. Otherwise one can always extend the weak solution to a larger time interval as long as $\mbox{dist}(\mcS(t),\pa\Om)>0$. We have the following corollary.

\begin{cor}
Under the hypotheses of Theorem \ref{local existence}, the system \eqref{Q:fluid}--\eqref{initial} admits a weak solution $(u,Q,H,\varphi)$ on a maximal time interval $(0,T_{max})$. If $T_{max}<\infty$, then $\mbox{dist}(\mcS(\varphi(T_{max})),\pa\Om)=0$.
\end{cor}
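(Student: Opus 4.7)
The plan is to apply a maximality argument combining Theorem \ref{local existence} with Lemma \ref{extension of sol}. First I would introduce the collection $\mathcal{T}$ of times $T>0$ for which there exists a weak solution $(u,Q,H,\varphi)$ of \eqref{Q:fluid}--\eqref{initial} on $(0,T)$ in the sense of Definition \ref{def:weak sol} satisfying $\mathrm{dist}(\mcS(\varphi(t)),\pa\Om)>0$ for all $t\in(0,T)$. By Theorem \ref{local existence}, $\mathcal{T}\neq\emptyset$, and one sets $T_{max}:=\sup\mathcal{T}\in(0,+\infty]$. A standard Zorn-type argument, ordering pairs $(T,(u,Q,H,\varphi))$ by extension and noting that restrictions of weak solutions to subintervals remain weak solutions, then produces a single weak solution defined on the entire open interval $(0,T_{max})$.

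For the dichotomy I would argue by contradiction: assume $T_{max}<\infty$ and suppose that $\mathrm{dist}(\mcS(\varphi(T_{max})),\pa\Om)=:2\beta>0$, where $\mcS(\varphi(T_{max}))=M_{0,T_{max}}(\mcS_0)$ is the set produced by Lemma \ref{lemma:isometry} and Lemma \ref{extension of sol}. Applying Lemma \ref{extension of sol} I obtain $u(T_{max})\in H(\Om)$, $Q(T_{max})\in H_0^1(\mcF(T_{max}))$ and the characteristic function $\varphi(T_{max})$ arising as the appropriate weak-type limits of the solution at $t\uparrow T_{max}$. Since $M_{0,T_{max}}$ is a rigid isometry, $\pa\mcS(\varphi(T_{max}))$ inherits the $C^2$ regularity of $\pa\mcS_0$, so the triple $(u(T_{max}),Q(T_{max}),\varphi(T_{max}))$ meets all hypotheses of Theorem \ref{local existence} with distance parameter $\beta$.

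Theorem \ref{local existence} would then produce a weak solution on some interval $(T_{max},T_{max}+\tau)$ with $\tau=\tau(u(T_{max}),Q(T_{max}),\varphi(T_{max}),\beta)>0$ on which $\mathrm{dist}(\mcS(\varphi(t)),\pa\Om)\geq \beta/2>0$. Invoking the last statement of Lemma \ref{extension of sol}, the concatenation of this extension with the solution on $(0,T_{max})$ yields a weak solution of \eqref{Q:fluid}--\eqref{initial} on $(0,T_{max}+\tau)$ maintaining positive distance to $\pa\Om$ throughout. This contradicts the maximality of $T_{max}$, forcing $\mathrm{dist}(\mcS(\varphi(T_{max})),\pa\Om)=0$.

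The main obstacle I anticipate is verifying that the limit objects supplied by Lemma \ref{extension of sol} are genuinely admissible initial data for Theorem \ref{local existence}: one must check that $\varphi(T_{max})$ is the characteristic function of a set whose boundary is of class $C^2$ (which follows from the rigidity of $M_{0,T_{max}}$, but should be recorded explicitly), and that $u(T_{max})$, $Q(T_{max})$ sit in the functional spaces required by Definition \ref{def:weak sol} rather than merely in some distributional closure. Apart from this verification and the mild bookkeeping for the Zorn-type construction, every step reduces to results already established earlier in the paper.
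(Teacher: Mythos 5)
Your argument is correct and is essentially the paper's own reasoning: the corollary is stated there as an immediate consequence of Lemma \ref{extension of sol} combined with Theorem \ref{local existence}, i.e.\ one keeps extending the solution (restarting from the limit data $u(T_{max})$, $Q(T_{max})$, $\varphi(T_{max})$, whose admissibility follows from the rigidity of the isometry $M_{0,T_{max}}$) as long as the colloid stays away from $\partial\Omega$, so a finite maximal time forces the distance to vanish. Your additional bookkeeping (the set $\mathcal{T}$ and the Zorn-type gluing) just makes explicit what the paper leaves implicit.
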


\begin{proof}[Conclusion of the proof of Theorem \ref{exist:main}]
It suffices to find a weak solution on any time interval $[0,T]$. Firstly, if $T\leq T_{max}$ then there is nothing to prove. Assume $T>T_{max}$. Set 
\begin{equation*}
    \Om_1:=\Om\setminus \overline{\mcS}(\varphi(T_0)).
\end{equation*}
As in \cite{F3}, we adopt the simplest contact condition that the rigid body will stick to the boundary after the collision. At time $T_{max}$, by Lemma \ref{extension of sol} one can define the functions $u(T_{max})\in H(\Om)$, $Q(T_{max})\in H_0^1(\mcF(T_{max}))$. Then on $[T_{max},T]\times \Om_1$, we seek for the solution $(u_1,Q_1)$ that solves the following system
\begin{align*}
    &u_1\in L^\infty(T_{max},T;H(\Om_1))\cap L^2(T_{max},T; V(\Om_1)),\quad Q_1\in L^\infty(T_{max},T;H_0^1(\Om_1))\cap L^2(T_{max},T;H^2(\Om_1));\\
    &\int\limits_{T_{max}}^T\int\limits_{\Om_1} \left[- Q_1:\pa_t \psi +(-\Gamma H_1+(u_1\cdot \nabla)Q_1-\Sigma_1 Q_1+Q_1\Sigma_1) : \psi   \right]\ dx \ dt =\int_{\Om_1} Q_1(T_{max}):\psi(T_{max})\,dx,\\
    &H_1=\Delta Q_1-\frac{\pa f_b(Q_1)}{\pa Q},\\
    &\int\limits_{T_{max}}^T\int\limits_{\Omega_1} \left[-u_1\cdot \pa_t\zeta - (u_1\otimes u_1): \na\zeta +\mu D(u_1):D(\zeta)\right]\ dx \ dt \\
&\qquad\qquad\quad = \int_{\Omega_1} u(T_{max})\cdot \zeta(T_{max})\,dx+ \int\limits_{T_{max}}^T\int\limits_{\Om_1} \left[(\na Q_1\odot \na Q_1):\na \zeta -(Q_1H_1-H_1Q_1):\na \zeta \right]\ dx \ dt,
\end{align*}
for any test functions $\psi\in H^1([T_{max},T]\times\Om_1)\cap L^2(T_{max},T; H_0^1(\Om_1))$ and $\zeta\in H^1([T,T_{max}]\times\Om_1)\cap L^2(T_{max},T; V(\Om_1))$ satisfying $\psi(T)=0$ and $\zeta(T)=0$. The system above describes the weak system of the Beris-Edwards model for nematic liquid crystals on a bounded domain with homogeneous Dirichlet boundary condition. The existence of such a weak solution $(u_1,Q_1)$ is established in \cite{abels2014well,MR3274285} by the Galerkin method.

Finally we define the functions $(\tilde{u}, \tilde{Q},\tilde{H},\tilde{\varphi})$ on $[0,T]\times \Om$ by
\begin{equation*}
        (\tilde{u}, \tilde{Q},\tilde{H},\tilde{\varphi}) =\begin{cases}
        (u,Q,H,\varphi), & t\in [0,T_{max}],\\
        (u_1\chi(\Om_1),Q_1\chi(\Om_1),H_1\chi(\Om_1), (1-\chi(\Om_1))), & t\in(T_{max},T].
        \end{cases}
\end{equation*}

Then it is straightforward to verify that
$ (\tilde{u}, \tilde{Q},\tilde{H},\tilde{\varphi})$ is a weak solution of the system \eqref{Q:fluid}--\eqref{initial} in the sense of Definition \ref{def:weak sol}, which ends the proof of the theorem.
\end{proof}

\begin{remark}\label{rmk:multi-particle}
    Theorem \ref{exist:main} can be easily extended to the case of multiple colloids. Suppose the liquid crystal flow contains a family of $k$ rigid bodies, the positions of which are represented by $\mcS^i(t)$, $i=1,...,k$. The function $\varphi(t)$ becomes the characteristic function of $\bigcup\limits_{i=1}^k\mcS^i(t)$. The weak solution can be defined in the same way as Definition \ref{def:weak sol}.
    Set
\begin{equation*}
    d(t):=\min\{\min\limits_{i,j}\mbox{dist}(\mcS^i(t),\mcS^j(t)),\min\limits_i \mbox{dist}(\mcS^i(t),\pa\Om)\},
\end{equation*}
and assume $d(0)>0$. Then following the same proof of Theorem \ref{local existence}, we can find a weak solution up to the time $T_{max}$ such that $d(T_{max})=0$. At the collision time $T_{max}$, let $J_0$ denote the set of index $i$ such that $\mcS^i$ touches the boundary, i.e.   
\begin{equation*}
    J_0:=\{i\in [1,...,k]: \pa\mcS^i(T_{max})\cap\pa\Omega\neq \varnothing\}.
\end{equation*}
Set 
$$
\Om_1=\Om\setminus (\bigcup\limits_{i\in J_0} \overline{\mcS^i}(T_{max})).
$$

 Moreover for any $i,j\in [1,...,k]\setminus J_0$, if $\mbox{dist}(\mcS^i(T_{max}), \mcS^j(T_{max}))=0$, then these two colloids merge into a new colloid $\tilde{\mcS}^i$. Repeat this merging process until the pairwise distance of all the colloids at time $T_{max}$ are positive. Then the flow can be extended to a larger time $T_{max}^2$ which represents the next collision time. Consequently, the existence of a weak solution can be established on any time interval $[0,T]$. 
\end{remark}

\begin{remark}
We want to give a particular example where the solid will not touch the boundary. We consider the system \eqref{Q:fluid}--\eqref{initial} with \eqref{linear momentum:body} replaced by
\begin{equation*}
mh''= -\int\limits_{\partial \mathcal{S}(t)} (2\mu D+\tau+\sigma-p\mathbb{I}_3) n\, d\Gamma + w(t) , 
\end{equation*}
where $w(t)$ is a feedback law of the form
\begin{equation}\label{feedback}
w(t)= k_p(h_1-h(t))-k_d h'(t).
\end{equation}
In control engineering, this feedback \eqref{feedback} is known as a proportional-derivative (PD) controller. The feedback $w(t)$ is generated by a spring (with spring constant $k_p>0$) and a mechanical damper (with constant $k_d\geq 0$) connected between $h(t)$ (centre of mass of $\mathcal{S}(t)$) and a fixed point $h_1\in\Omega$. This kind of feedback in the context of ``motion of rigid body in the fluid'' has been considered in \cite{MR3397328} (Incompressible fluid), \cite{MR4238201} (Compressible fluid). We can follow the calculations of energy inequality $($\cref{galerkin:energy}$)$ and \cite[Corollary 4.3]{MR3397328} to obtain: let $h_1\in\Omega$ such that $\operatorname{dist}(h_1,\partial\Omega)\geq \alpha>0$ and $w$ satisfies feedback law \eqref{feedback}. Then
\begin{multline*}
\frac{1}{2}\int_{\mcF(t)}|u|^2\,dx+m|h'|^2+J\omega\cdot\omega+\int_{\mcF(t)}\left(\frac{1}{2}|\nabla Q|^2+ f_{b}(Q)\right)\,dx +k_p|h(t)-h_1|^2\\ \leq \frac{1}{2}\int_{\mcF_0}|u_0|^2\,dx+m|\ell_0|^2+J|\omega_0|^2+\int_{\mcF_0}\left(\frac{1}{2}|\nabla Q_0|^2+ f_{b}(Q_0)\right)\,dx +k_p|h(0)-h_1|^2.
\end{multline*}
Moreover, there exists $\delta$, $\varepsilon$ such that if 
\begin{equation*}
\frac{1}{2}\int_{\mcF_0}|u_0|^2\,dx+m|\ell_0|^2+J|\omega_0|^2+\int_{\mcF_0}\left(\frac{1}{2}|\nabla Q_0|^2+ f_{b}(Q_0)\right)\,dx +k_p|h(0)-h_1|^2 \leq \delta^2,
\end{equation*}
then 
\begin{equation*}
\operatorname{dist}(h(t),\partial\Omega) \geq \varepsilon > 0.
\end{equation*}
\end{remark}

\section{Appendix}\label{appendix}

 It is clear that we can derive the weak formulations \eqref{ws:Q}--\eqref{ws:H} from the equations \eqref{Q:fluid} and \eqref{newH}. We want to explain here how to obtain the weak formulation \eqref{ws:u} of the momentum equation with the help of fluid equations \eqref{vel:fluid}--\eqref{div:fluid} and the rigid body equations \eqref{linear momentum:body}--\eqref{angular momentum:body}.

{\bf Derivation of the weak formulation \eqref{ws:u}:}  Let us consider the test function $\zeta\in H^1((0,T)\times \Omega))\cap L^2(0,T;K_0(\varphi)), \ \zeta(T)=0$. We multiply the equation \eqref{vel:fluid} by $\zeta$:
\begin{align}\label{eq1}
\int\limits_0^T\int\limits_{\mcF(t)}\left({\partial_t u^{\mcF}} + \left(u^{\mcF}\cdot \nabla\right)u^{\mcF} + \nabla p^{\mcF} -\mu\Delta u^{\mcF} - \operatorname{div}(\tau +\sigma)\right)\cdot\zeta\ dx \ dt= 0.
\end{align}
Observe that by using the Reynolds transport theorem, we obtain
\begin{equation}\label{Reytransport}
\frac{d}{dt}\left(\int\limits_{\mcF(t)}u^{\mcF}\cdot\zeta\right) = \int\limits_{\mcF(t)} \frac{\partial}{\partial t} u^{\mcF}\cdot\zeta+ \int\limits_{\mcF(t)} u^{\mcF}\cdot\frac{\partial}{\partial t}\zeta + \int\limits_{\partial\mcF(t)} (u^{\mcF}\cdot\zeta)(u^{\mcF}\cdot n).
\end{equation}
Using the divergence free condition \eqref{div:fluid} and integrating by parts, we have
\begin{multline}\label{eq2}
\int\limits_{\mcF(t)}\left( \left(u^{\mcF}\cdot \nabla\right)u^{\mcF} + \nabla p^{\mcF} -\mu\Delta u^{\mcF} - \operatorname{div}(\tau +\sigma)\right)\cdot\zeta \ dx  \\ = \int\limits_{\mcF(t)} \left(- (u^{\mcF}\otimes u^{\mcF}) +\mu D(u^{\mcF})+ (\tau +\sigma)\right): \na\zeta \ dx  -\int\limits_{\partial \mathcal{S}(t)} (2\mu D(u^{\mcF})+\tau+\sigma-p^{\mcF}\mathbb{I}_3) n\cdot\zeta\, d\Gamma \\ + \int\limits_{\partial \mcS(t)} (u^{\mcF}\cdot\zeta)(u^{\mcF}\cdot n)\ d\Gamma.
\end{multline}
Moreover, as $\operatorname{div}\zeta=0$ in $\Omega$ and $D(\zeta)=0$ in $\mcS(t)$, there exists $\ell_{\zeta}$, $\omega_{\zeta}$ such that $\zeta (x,t)=\ell_{\zeta}(t)+ \omega_{\zeta}(t)\times (x-h(t))$ for $(t,x)\in (0,T)\times\mcS(t)$. We multiply the equations \eqref{linear momentum:body} and \eqref{angular momentum:body} by $\ell_{\zeta}$ and $\omega_{\zeta}$ respectively to obtain
\begin{equation}\label{eq3}
-\int\limits_0^T\int\limits_{\partial \mathcal{S}(t)} (2\mu D(u^{\mcF})+\tau+\sigma-p^{\mcF}\mathbb{I}_3) n\cdot\zeta\, d\Gamma\ dt = \int\limits_0^T (m\ell'\cdot\ell_{\zeta} + J\omega'\cdot\omega_{\zeta})\ dt + m\ell(0)\cdot\ell_{\zeta}(0) + J\omega(0)\cdot\omega_{\zeta}(0).
\end{equation}
We use the definition \eqref{def:u} of uniform velocity field $u$ along with the identities \eqref{Reytransport}, \eqref{eq2}, \eqref{eq3} in the equation \eqref{eq1} to obtain the weak formulation \eqref{ws:u}:
\begin{multline*}
\int\limits_0^T\int\limits_{\Omega} \left[-u\cdot \pa_t\zeta - (u\otimes u): \na\zeta +\mu D(u):D(\zeta)\right]\ dx \ dt 
= \int_{\Omega} u(0)\cdot \zeta(0)\,dx+ \int\limits_0^T\int\limits_{\mcF(t)} \left[-\tau:\na \zeta -\sigma:\na\zeta \right]\ dx \ dt.
\end{multline*}


\bibliography{LC_Dirichlet}
\bibliographystyle{siam}

\end{document}